\numberwithin{equation}{section}
\theoremstyle{plain}
\newtheorem{Th}{Theorem}[section]
\newtheorem{Lemma}[Th]{Lemma}
\newtheorem{Cor}[Th]{Corollary}
\newtheorem{Prop}[Th]{Proposition}
 \theoremstyle{definition}
\newtheorem{Def}[Th]{Definition}
\newtheorem{Rem}[Th]{Remark}
\newtheorem{?}[Th]{Question}
\newcommand{\RN}{\mathbb{R}}
\newcommand{\CN}{\mathbb{C}}
\newcommand{\ZN}{\mathbb{Z}}
\newcommand{\la}{\langle}
\newcommand{\ra}{\rangle}
\newcommand\norm[1]{\left\lVert#1\right\rVert}
\begin{document}

\title[Rarita-Schwinger-Seiberg-Witten equations]{$Pin(2)-$equivariance property of the Rarita-Schwinger-Seiberg-Witten equations}

\author[Minh Lam Nguyen]{Minh Lam Nguyen}

\address{Washington University in St. Louis \\ Department of Mathematics and Statistics \\
St. Louis MO 63130, U.S.A} 

\email{minhn@wustl.edu}

 \subjclass[2010]{Primary: 57R57, 53C27.}

 \keywords{spin geometry, Rarita-Schwinger operator, Index theorem, gauge theory.}

\begin{abstract}We define a variant of the Seiberg-Witten equations using the Rarita-Schwinger operators for a closed simply connected spin smooth $4-$manifold $X$. The moduli space of solutions to the system of non-linear differential equations consist of  harmonic $3/2-$spinors and $U(1)-$connections satisfying certain curvature condition. Beside having an obvious $U(1)-$symmetry, these equations also have a symmetry by $Pin(2)$. We exploit this additional symmetry to perform finite dimensional approximations for the eigenvalue problem of the $3/2-$monopole map and show that under a certain topological assumption, the moduli space of solutions is always non-compact, and thus non-empty. \end{abstract}

\maketitle

\tableofcontents

\section{Introduction} 
\subsection{Main result} The Rarita-Schwinger operator $Q$ was first introduced by Rarita and Schwinger in a 1941 paper to describe the wave functions of the so-called \textit{3/2-spinors} \cite{RS41}. It is a first-order elliptic operator that has the analytic continuation property. The higher spinors that live in the kernel of $Q$ are called \textit{Rarita-Schwinger fields}. In Physics, Rarita-Schwinger fields are important in the study of supergravity and superstring theory; thus, the literature in this area of science is vast. In Mathematics, this operator has not been studied as much except in the classical context of Clifford analysis.
 
Within geometry, Branson and Hijazi showed that $Q$ is conformally invariant. They also wrote down a Weitzenb\"ock-type formula for $Q^2$ \cite{BH02}. In contrast to the case of the spinor Dirac operator, the Weitzenb\"ock formula for $Q^2$ has a lower order term involving the Ricci curvature.  In a complicated situation where $Q$ is defined by a choice of a unitary $U(1)-$connection of some line bundle, it seems that the kernel of $Q$ should be studied under a restrictive geometric hypothesis of the underlying space. M. Wang also studied the role of the solutions to $Q$ in the deformation theory of Einstein metrics that admit parallel spinors \cite{W91}. These results could allude to the fact that $Q$ in the setting of a Seiberg-Witten-type gauge theory may have something new to say about smooth simply connected spin $4-$manifolds.

We briefly introduce the definition of a Rarita-Schwinger operator, more details will come in latter sections. Let $X$ be a closed spin smooth $4-$manifold and $\mathfrak{s}_{1/2}^{\pm}$ be the positive (negative) spinor bundle over $X$ associated to a certain spin structure. Any $spin^c$ structure on $X$ is given by $\mathfrak{s}_{1/2}\otimes_{\CN} L$, where $L$ is some line bundle. When $g$ is a fixed Riemannian metric on $X$, there is an associated Clifford multiplication $\rho: TX \otimes \mathfrak{s}^{\pm}_{1/2}\otimes L \to \mathfrak{s}^{\mp}_{1/2}\otimes L$. The positive (negative) $3/2-$spinor bundle $\mathfrak{s}^{\pm}_{3/2}\otimes L$ is  a sub-bundle of $TX\otimes \mathfrak{s}^{\pm}_{1/2} \otimes L$ given by $ker\, \rho$. Denote the orthogonal projection by $\pi^{\pm} : TX\otimes \mathfrak{s}^{\pm}_{1/2} \otimes L \to \mathfrak{s}^{\pm}_{3/2}\otimes L$. The twisted Rarita-Schwinger operator $Q^{\pm}_A : \Gamma(\mathfrak{s}^{\pm}_{3/2}\otimes L)\to \Gamma(\mathfrak{s}^{\mp}_{3/2}\otimes L)$ is defined as $Q^{\pm}_{A} = \pi^{\mp}\circ \mathcal{D}_{A}^{\pm}|_{\Gamma(\mathfrak{s}^{\pm}_{3/2}\otimes L)}$, where $\nabla_A$ is a unitary connection on $L$ and $\mathcal{D}_A$ is the Dirac operator associated to $TX \otimes \mathfrak{s}_{1/2}\otimes L$. A variant of the Seiberg-Witten equations (RS-SW) using the Rarita-Schwinger operator is given as follows
\begin{equation}
Q^+_A \psi = 0,\, \, \, \, F^+_A = \rho^{-1}(\mu(\psi)),
\end{equation}
where $\mu(\psi)$ is the traceless part of the endomorphism $\psi\psi^*$ on $\mathfrak{s}^+_{1/2}\otimes L$ and $F^+_A$ is the self-dual part of the curvature $F_A$. The unknowns of $(1.1)$ are pairs $(A,\psi)$ where $A$ is a $U(1)-$connection of $L$ and $\psi \in \Gamma(ker\, \rho)$.

\begin{?}
Are there non-trivial solutions to equations (1.1)?
\end{?}

The gauge symmetry of $(1.1)$ is given by $\mathcal{G} = Maps\,(X \to U(1))$. It is not hard to see that $(1.1)$ modulo gauge is a system of elliptic PDEs. If one wants to establish a gauge-theoretic invariant of $4-$manifolds by "counting" solutions to $(1.1)$ modulo gauge, one has to answer Question 1.1 first. It is not our intent to set up the full theory of RS-SW equations here. Rather, this paper, whose main purpose is to give some definitive answer to Question 1.1 in a certain context, serves as one of the important foundational steps to the full theory that we will address in our future work.  

The above question also could fit into the broader context of counting Rarita-Schwinger-fields considered by several people in recent years. Note that when $L=\underline{\CN}$ is the trivial line bundle, $Q_{A}$ above is reduced to the usual $untwisted$ Rarita-Schwinger operator $Q$ if $\nabla_A$ is just the trivial connection. In 2019, Homma and Semmelmann considered the problem of "counting solutions" to $Q$ \cite{HS19}. In their work, they gave a complete classification of positive quaternion-K\"ahler manifolds and spin symmetric spaces that have \textit{non-trivial} Rarita-Schwinger fields. In 2021, a paper of B\"{a}r and Mazzeo showed the existence of a sequence of closed simply connected negative K\"ahler-Einstein spin manifolds for which the dimension of $ker\, Q$ tends to infinity \cite{BM21}.

Often in both Yang-Mills gauge theory and classical Seiberg-Witten theory, experts use analytical methods to construct non-trivial solutions. For example in Yang-Mills gauge theory, Taubes in \cite{T82, T84} used the compactification of the ASD moduli space to show the existence of solution of the ASD equation by "pushing in" the solutions from the compactification. A similar approach was also considered in the context of other gauge-theoretic equations such as the Hitchin equation \cite{M12} and the multiple-spinor Seiberg-Witten equations \cite{P22, P23}. Our approach showing the existence of non-trivial solutions of the RS-SW equations is different and non-constructive. Roughly, in an appropriate set-up, via a hybrid method that combines functional analysis and algebraic topology, we show that the moduli space of solutions is non-compact. Equivalently, there is a sequence of solutions 
$$(A_n, \psi_n) \text{ such that } \norm{\psi_n}_{L^2} \to \infty.$$
This implies that the moduli space must contain non-trivial solutions. More comments about our method will be provided later (cf. Subsection 1.2).  We partially answer Question 1.1 by  providing a sufficient condition that ensures that there is always a non-trivial solution to (1.1) when $L$ is a trivial line bundle. In this situation, take $\nabla_{A_0}$ to be trivial and $a$ to be any purely imaginary $1-$form, then (1.1) can be rewritten as
\begin{equation}
Q^+\psi + \pi^-(a\cdot \psi) = 0, \, \, \, \, d^+ a = \rho^{-1}(\mu(\psi))
\end{equation}
Let $\mathcal{M}_g$ be the moduli space of solutions to (1.2) after the gauge fixing condition by the gauge group $\mathcal{G}:=\,Maps\,(X,U(1))$. The main result of this paper shows that

\begin{Th}
Suppose that $X$ is a closed simply connected smooth spin $4-$manifold whose intersection form is indefinite and $g$ is any Riemannian metric on $X$. If $\mathcal{M}_g$ is compact, then we must have
$$b_2(X) \geq \frac{15}{4}\,\sigma(X) + 2,$$
where $b_2(X)$ is the rank of $H^2(X)$ and $\sigma(X)$ is the signature of $X$.
\end{Th}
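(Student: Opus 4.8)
The plan is to run a $Pin(2)$-equivariant finite-dimensional approximation for the $3/2$-monopole map, in the spirit of Furuta's proof of the $10/8$-theorem, with the Rarita-Schwinger operator $Q^+$ playing the role of the Dirac operator. The first step is the index computation. Since $\rho\colon TX\otimes\mathfrak{s}^\pm_{1/2}\to\mathfrak{s}^\mp_{1/2}$ is fiberwise surjective, there is an orthogonal splitting $TX\otimes\mathfrak{s}^\pm_{1/2}\cong\mathfrak{s}^\pm_{3/2}\oplus\mathfrak{s}^\mp_{1/2}$ under which $\mathcal{D}$ (the Dirac operator on $TX\otimes\mathfrak{s}_{1/2}$) is block-triangular up to zeroth order, with diagonal blocks $Q^+$ and a Dirac operator $\mathfrak{s}^-_{1/2}\to\mathfrak{s}^+_{1/2}$. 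Atiyah-Singer gives $\mathrm{ind}_{\CN}\mathcal{D}=\int_X\hat A(X)\,\mathrm{ch}(TX\otimes\CN)=\tfrac52\sigma(X)$ and $\mathrm{ind}_\CN(\mathfrak{s}^-_{1/2}\to\mathfrak{s}^+_{1/2})=\tfrac18\sigma(X)$, hence
\[
\mathrm{ind}_\CN Q^+=\tfrac52\sigma(X)-\tfrac18\sigma(X)=\tfrac{19}{8}\,\sigma(X),
\]
and twisting by a trivial $L$ with a varying imaginary connection does not change this. Writing $b_2(X)=b^+(X)+b^-(X)$ and $\sigma(X)=b^+(X)-b^-(X)$, the asserted inequality $b_2(X)\ge\tfrac{15}{4}\sigma(X)+2$ is equivalent to $b^+(X)\ge\tfrac{19}{8}\sigma(X)+1=\mathrm{ind}_\CN Q^++1$; when $\sigma(X)\le 0$ this holds automatically since an indefinite even form has rank $\ge 2$, so only $\sigma(X)>0$ requires work.

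Next I would set up the monopole map. After imposing the Coulomb gauge $d^*a=0$ and using $H^1(X)=0$, one obtains on suitable Sobolev completions a map
\[
\widetilde\mu\colon \Gamma(\mathfrak{s}^+_{3/2})\oplus\Omega^1(X;i\RN)\longrightarrow \Gamma(\mathfrak{s}^-_{3/2})\oplus\Omega^+(X;i\RN)\oplus\bigl(\Omega^0(X;i\RN)/i\RN\bigr),
\]
\[
\widetilde\mu(\psi,a)=\bigl(Q^+\psi+\pi^-(a\cdot\psi),\ d^+a-\rho^{-1}(\mu(\psi)),\ d^*a\bigr),
\]
which is (linear Fredholm) $+$ (compact) by Rellich and Sobolev multiplication. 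The essential structural input is the $Pin(2)$-equivariance. In dimension four the quaternionic structure on $\mathfrak{s}^\pm_{1/2}$ is preserved by Clifford multiplication by real covectors; since $\mathfrak{s}^\pm_{3/2}=\ker\rho$ and $\pi^\pm$ is the orthogonal projection, the induced $\mathbb{H}$-structure on $TX\otimes\mathfrak{s}^\pm_{1/2}$ restricts to $\mathfrak{s}^\pm_{3/2}$, so that $Q^+$ and $\pi^-(a\cdot-)$ are quaternionic-linear. Letting $U(1)$ act by scalars on spinors and $j\in Pin(2)$ act by $-1$ on $i\RN$-valued forms, the standard Clifford computation (carried out inside $\ker\rho$) shows $\rho^{-1}(\mu(j\psi))=-\rho^{-1}(\mu(\psi))$, whence $\widetilde\mu$ is $Pin(2)$-equivariant. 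Checking these compatibilities carefully on the subbundle $\ker\rho$ is the main technical point, although structurally it mirrors the classical case.

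Then I would perform the finite-dimensional approximation: spectrally truncate the linear part and project, obtaining $Pin(2)$-equivariant maps $f_\lambda\colon V_\lambda\to W_\lambda$ of finite-dimensional representations with $V_\lambda\cong\mathbb{H}^{p}\oplus\widetilde{\RN}^{\,q}$ and $W_\lambda\cong\mathbb{H}^{\,p-\frac{19}{16}\sigma(X)}\oplus\widetilde{\RN}^{\,q+b^+(X)}$, where $\widetilde{\RN}$ is the real sign representation of $Pin(2)$; the $\mathbb{H}$-shift is $\mathrm{ind}_{\mathbb{H}}Q^+=\tfrac{19}{16}\sigma(X)$, an integer since $16\mid\sigma(X)$, and the $\widetilde{\RN}$-shift is $\dim\mathcal H^+(X)=b^+(X)$ coming from $\operatorname{coker}(d^+\oplus d^*)$. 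This is exactly where the hypothesis that $\mathcal{M}_g$ is compact is used: unlike ordinary Seiberg-Witten theory, the Weitzenböck formula for $Q^2$ carries an uncontrolled zeroth-order term, so there is no a priori $C^0$-bound and hence no automatic compactness; granting that $\mathcal{M}_g=\widetilde\mu^{-1}(0)/\mathcal{G}$ is compact, a standard argument shows the zero sets of $f_\lambda$ are uniformly bounded for $\lambda\gg 0$, so $f_\lambda$ extends to a $Pin(2)$-equivariant map of one-point compactifications $f^+_\lambda\colon V_\lambda^+\to W_\lambda^+$ whose restriction to $U(1)$-fixed points is the compactification of an injective linear map.

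Finally, applying Furuta's non-existence theorem for $Pin(2)$-equivariant maps between such representation spheres yields $b^+(X)\ge 2\,\mathrm{ind}_{\mathbb{H}}Q^++1=\tfrac{19}{8}\sigma(X)+1$, and substituting $b_2(X)=2b^+(X)-\sigma(X)$ gives $b_2(X)\ge\tfrac{15}{4}\sigma(X)+2$. The main obstacles, beyond bookkeeping, are the equivariance verification on $\ker\rho$ and the technical realization of the approximation that makes rigorous use of the compactness hypothesis in place of the usual automatic $C^0$-bound; the underlying representation-theoretic obstruction itself can be quoted from Furuta's work essentially verbatim.
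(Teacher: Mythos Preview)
Your proposal is correct and follows essentially the same approach as the paper: compute $\mathrm{ind}_{\CN}Q^+=\tfrac{19}{8}\sigma(X)$, verify $Pin(2)$-equivariance of the gauge-fixed monopole map, use the compactness hypothesis on $\mathcal{M}_g$ to push the finite-dimensional approximation to a $Pin(2)$-equivariant map of representation spheres, and then invoke Furuta's $K$-theoretic obstruction to obtain $b_2^+(X)\ge\tfrac{19}{8}\sigma(X)+1$. The only minor differences are that the paper derives the index via Chern-character manipulations rather than the block decomposition of $\mathcal{D}$, and it reproves the Furuta obstruction (Proposition~4.1) rather than quoting it; neither changes the substance of the argument.
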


\begin{Cor}
Let $X$ be a closed simply connected smooth spin $4-$manifold such that $b_2(X) < 15\, \sigma(X) /4 + 2$. Then for any Riemannian metric $g$ on $X$, the moduli space mod gauge $\mathcal{M}_g$ of $(1.2)$ is always non-compact. As a result, $\mathcal{M}_g$ is non-empty and there are always non-trivial solutions to $(1.2)$.
\end{Cor}

\begin{Rem}
On $\mathbb{R}^4$, (1.1) is just a system of Seiberg-Witten equations for multiple-spinors $\{\psi_i\}_{i=0}^{3}$ that Taubes and several others have considered \cite{T16, HW15, WZ21}, but with an extra condition that $-\psi_0 + I\psi_1 + J\psi_2+K\psi_3 = 0$, where $I, J, K$ are the usual Pauli spin matrices. We hope to explore this direction in our upcomming work.
\end{Rem}

\begin{Rem}
In general, by the above theorem, $X \# S^2 \times S^2$ might still have non-compact moduli space of non-trivial solutions under a certain topological data assumed about $X$. If there is a smooth invariant associated to the RS-SW equations, then it is possible that such an invariant of $X\# S^2 \times S^2$ is non-zero. In contrast to Seiberg-Witten theory, Taubes showed that the Seiberg-Witten invariant of such a manifold would always be zero \cite{HT99}.
\end{Rem}

\begin{Rem}
Theorem 1.2 still holds for a perturbed version of (1.2). In fact, it can be shown that with appropriate perturbation and after gauge fixing, the moduli space of solutions is always a finite dimensional manifold that has an orientation depending on the choice of orientation of $H^{+}(X)$. The transversality statement is usually one of the important steps if we want to define a possible gauge theoretic invariant associated to the RS-SW equations. We will save the treatment of these results for our later work.
\end{Rem}

\begin{Rem}
In light of Corollary 1.3, it is interesting to know what should be a compactification of $\mathcal{M}_g$. We do not attempt to address this question in the present paper. However, we speculate that in some ideal geometric situation of $X$ (e.g, the metric $g$ on $X$ is Einstein with non-negative scalar curvature), any solution of $(1.2)$ of the form $(0, \psi)$  also solves a certain multiple-spinor Seiberg-Witten equations. Thus, possibly, any sequence of solutions of the type $(0, \psi_n)$ converges to a certain  so-called Fueter section after passing through a subsequence modulo gauge away from a certain singular set in $X$. At the moment, we are not aware of such a type of convergence statement  for any arbitrary sequence of solutions $(a_n, \psi_n)$ of $(1.2)$ in a less restrictive geometric setting of $X$. We reserve this question for future exploration.
\end{Rem}

\subsection{Motivation of the approach} As pointed out in the previous subsection, the existence of non-trivial solution can be potentially inferred from studying the convergence type of a sequence of solutions to the RS-SW equations. In this subsection, we briefly motivate the method we use to obtain a proof of Theorem 1.2 (Corollary 1.3). We opt for a more abstract method of finite dimensional approximation in functional analysis (cf. \cite{F01}) to study the moduli space $\mathcal{M}_g$ of the equations $(1.2)$. The key insight of our method is realizing that the defining functional $\mathcal{F}$ of $(1.2)$ is roughly a Fredholm operator modulo gauge, where its spectrum is discrete. Furthermore, $\mathcal{F}$ is not only $U(1)-$equivariant but also $Pin(2)-$equivariant. A finite approximation of $\mathcal{F}$ is roughly a map restricted to a finite direct sum of eigenspaces of $\mathcal{F}$. If $\mathcal{M}_g$ is compact (possibly empty), one can show that there exists a finite approximation of $\mathcal{F}$ that induces a $Pin(2)-$equivariant map between $Pin(2)-$representation spheres. Via a calculation in equivariant $K-$theory for $Pin(2)$, such a map cannot exist unless one has a certain bound between $b_2(X)$ and $\sigma(X)$, which is purely a topological condition. One of the advantages of our approach is that we can bypass various delicate assumptions about the geometry of $X$ (which we will describe right below) that could potentially affect $\mathcal{M}_g$.  Rather,  we focus only on the homotopy type of $\mathcal{F}$ itself to indirectly derive some information about $\mathcal{M}_g$.

We elaborate some of the difficulties if one hypothetically insists on using a more traditional analytical approach in gauge theory to prove Theorem 1.2. Following some of the conventions that were set-up in the previous subsection, the orthogonal projection $\pi$ also gives rise to another differential operator $P_A := \pi \circ \nabla_A$, which is called the twistor operator (cf. Subsection 2.1). It turns out that the kernel of $Q_A^{\pm}$ is tied intimately with $P_A^{\pm}$ in the following sense. Firstly, it can be shown that 
$$Q_A^{-}Q_A^{+} + P^+_A P^{+*}_A = \nabla^*_A \nabla_A + \frac{\kappa}{4} + \pi (\rho(F^+_A)) - \pi(1 \otimes Ric)^+,$$
where $\kappa$ and $Ric$, respectively, are the scalar curvature and the Ricci curvature of the metric $g$ on $X$. Already,  the appearance of $P_A^+P^{+*}_A$ and $Ric$ in this Weitzenb\"ock-type formula for $Q_A$ is what would set the analysis of the RS-SW equations apart from the usual multiple-spinor Seiberg-Witten equations.  Thus, the method of analyzing solutions via frequency functions by Taubes and several other people \cite{T12, T13, T14, T16, HW15, WZ21} may not be applied directly as one might have hoped in this set-up. 

Secondly, one way to deal with $P_AP^*_A$ that appears in the Weitzenb\"ock formula for $Q_A$ is by appealing to the fact that $P_A^*P_A$ is an elliptic operator. By the ellipticity of $P_A^*P_A$, we can think of $\Gamma(ker\, \rho) = ker\, P^*_A \oplus im\, P_A$ so that in the Weitzenb\"ock formula for $Q_A$ restricted to $ker \, P_A^*$, there would be no appearance of $P_AP_A^*$ anymore. One would be tempted to decouple the RS-SW equations corresponding the orthogonal decomposition of $\Gamma(ker \, \rho) = ker\, P^*_A \oplus im\, P_A$. However, such a decoupling of the RS-SW equations could only be meaningful when $Q_A$ preserves the aforementioned orthogonal decomposition of $\Gamma( ker\, \rho)$, i.e,
$$Q_A : ker\, P_A^* \to ker\, P_A^*, \quad \quad Q_A : im\, P_A \to im\, P_A.$$
It turns out that $Q_A$ behaves in such a way described above when one considers a rather restricted condition on the Riemannian metric $g$ on $X$ and the connection $U(1)-$connection $A$; e.g, $g$ is an Einstein metric and $A$ is a flat connection. Even with those assumed hypothesis, the technique of analyzing solutions in \cite{T12, T13, T14, T16, HW15, WZ21} may only be applied directly to the type of the solutions of RS-SW equations of the form $(A, \psi)$ where $A$ is flat. At the moment, we are not aware of any general analytical method that is somewhat in line with the one considered in \cite{T12, T13, T14, T16, HW15, WZ21} to analyze the solutions of the RS-SW equations when there is no additional assumption on the geometry of $X$. We note that these difficulties also arise in a simpler setting that gives the system $(1.2)$. 

\subsection{Organization}The organization of our paper is as follows:

From this point onward, $X$ is always assumed to be simply connected unless otherwise stated. In section 2, we introduce the construction of Rarita-Schwinger operators in greater details and calculate the index of a twisted Rarita-Schwinger operator. Then we re-state our variant of the Seiberg-Witten equations using the Rarita-Schwinger operator. We shall show that the equations are invariant under the gauge group action $\mathcal{G}$, and in fact there is a gauge fixing condition just as in the usual Seiberg-Witten setting. We will also prove that the equations have an extra $Pin(2)-$symmetry when $L=\underline{\CN}$. Note that this $Pin(2)-$symmetry is a special phenomena  only for "trivial" $spin^c$ structure over $X$, the technique that we use for the proof of Theorem 1.2 cannot be extended to a general $spin^c$ bundle.

In section 3, we prove various functional analysis related facts about the functional $\mathcal{F}$ of our RS-SW equations to set up for its finite dimensional approximation. Specifically, we show that the linearization $d\mathscr{Q}$ of the quadratic part $\mathscr{Q}$ of (1.2) is a $Pin(2)-$equivariant compact operator at any configuration whose norm is less than a fixed $R_0>0$. Moreover, the union over $B(0,R_0)$ of all the images of the closed unit ball via $d\mathscr{Q}$ has compact closure. We will also briefly recall the setup of finite dimensional approximation via global Kuranishi model for the sake of self-containment. Such a model was also used by Furuta in the proof of his 10/8th-Theorem \cite{F01}. In fact, we closely adapt Furuta's technique in our setting.

In section 4, we briefly recall some facts about equivariant $K-$theory and provide a proof Theorem 1.2. The use of equivariant $K-$theory in our proof is similar to Furuta's with a minor difference: we show that $\textit{if}$ $\mathcal{M}_g$ $\textit{is compact}$, then there is a finite dimensional approximation for $\mathcal{F}$ that can be viewed as $Pin(2)-$equivariant map between $Pin(2)-$representation spheres. And such a map cannot exist \textit{unless} $b_2(X) \geq 15\,\sigma(X)/4 + 2$.

In section 5, we give some concluding remarks and state potential questions for future works. 

\textit{\textbf{Acknowledgement.}} This paper is part of author's PhD dissertation thesis. The author is grateful to Jeremy Van Horn-Morris for many hours of fruitful conversation about Furuta's 10/8th-Theorem and for taking the time to clarify some of the points in the proof of the main result in this paper. We appreciate the encouragement of John Ryan who was the first person to tell the author about the Rarita-Schwinger operator. Finally, we are thankful for some of the feedbacks from Aliakbar Daemi and Greg Parker on an early manuscript of this paper.

\section{Rarita-Schwinger-Seiberg-Witten equations}
\subsection{Rarita-Schwinger operator} A Rarita-Schwinger operator can be defined on a general Riemannian manifold. Let $(X^{2n},g)$ be a closed Riemannian even dimensional manifold from now. A Clifford bundle $S\to X$ is a complex vector bundle that is equipped with a Hermitian inner product, a compatible connection $\nabla$, and a Clifford multiplication $\rho : TX \otimes S \to S$. Since the manifold is even dimensional, there is an orthogonal decomposition of $S = S^+ \oplus S^-$ such that $\rho$ exchanges the chirality.

Consider $ker\, \rho$, which is a subbundle of $TX\otimes S$ over $X$. With respect to the chirality of $S$, we also have an orthogonal decomposition $ker\, \rho = (ker\,\rho)^+ \oplus (ker\, \rho)^-$. Let $\iota : S^{\mp} \to T^*X\otimes S^{\pm} \cong TX \otimes S^{\pm}$ be the embedding defined $\iota(\varphi)(X) = -\frac{1}{2n}\rho(X)\varphi$. 

\begin{Lemma}We  have an orthogonal decomposition $S^{\pm}\otimes TX = \iota(S^{\mp})\oplus (ker\, \rho)^{\pm}$.
\end{Lemma}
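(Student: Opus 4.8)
The plan is to note first that the asserted decomposition is pointwise over $X$, so it suffices to verify the corresponding linear-algebra statement on the fibers at each $x\in X$. Fixing a local orthonormal frame $e_1,\dots,e_{2n}$ of $TX$ near $x$ (and using $g$ to identify $T^*X$ with $TX$), a section of $S^{\pm}\otimes TX$ is written $v=\sum_i v_i\otimes e_i$ with $v_i\in S^{\pm}$, and $\rho(v)=\sum_i\rho(e_i)v_i\in S^{\mp}$. The one computation driving everything is $\rho\circ\iota=\mathrm{id}_{S^{\mp}}$: since $\rho(e_i)^2=-|e_i|^2=-1$ and there are $2n$ frame vectors, for $\varphi\in S^{\mp}$ we get $\rho(\iota(\varphi))=-\tfrac{1}{2n}\sum_i\rho(e_i)^2\varphi=\varphi$ (here $\iota(\varphi)\in S^{\pm}\otimes TX$ because $\rho$ reverses chirality). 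In particular $\iota$ is injective and $\rho\colon S^{\pm}\otimes TX\to S^{\mp}$ is surjective.

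Next I would set $P:=\iota\circ\rho$, an endomorphism of $S^{\pm}\otimes TX$. From $\rho\circ\iota=\mathrm{id}$ one has $P^2=\iota(\rho\iota)\rho=\iota\rho=P$, so $P$ is an idempotent; its image is $\iota(S^{\mp})$ (by surjectivity of $\rho$), and since $\iota$ is injective its kernel is exactly $(\ker\rho)^{\pm}$. This already gives the splitting $S^{\pm}\otimes TX=\iota(S^{\mp})\oplus(\ker\rho)^{\pm}$ as an a priori non-orthogonal direct sum, with $P$ the projection onto the first factor along the second.

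Finally I would upgrade this to an orthogonal decomposition by checking $\iota(S^{\mp})\perp(\ker\rho)^{\pm}$ directly. Using compatibility of the Hermitian metric with $\rho$, i.e.\ that $\rho(Y)$ is skew-Hermitian for real $Y$, for $\varphi\in S^{\mp}$ and $v=\sum_i v_i\otimes e_i\in(\ker\rho)^{\pm}$ one computes
\[
\langle\iota(\varphi),v\rangle=-\tfrac{1}{2n}\sum_i\langle\rho(e_i)\varphi,v_i\rangle=\tfrac{1}{2n}\sum_i\langle\varphi,\rho(e_i)v_i\rangle=\tfrac{1}{2n}\langle\varphi,\rho(v)\rangle=0,
\]
since $\rho(v)=0$; equivalently this says $P$ is self-adjoint, hence the orthogonal projection, and the lemma follows. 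I do not expect a genuine obstacle here — the argument is a short fiberwise computation — the only point needing care is keeping the normalization $\tfrac{1}{2n}$ in the definition of $\iota$ consistent with the sign and adjointness conventions fixed for $\rho$, so that $\rho\circ\iota=\mathrm{id}$ and the skew-Hermitian identity both come out with the stated constants.
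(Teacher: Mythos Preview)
Your proof is correct and follows essentially the same route as the paper: the orthogonality computation $\langle\iota(\varphi),v\rangle=\tfrac{1}{2n}\langle\varphi,\rho(v)\rangle=0$ via skew-Hermiticity of $\rho(e_i)$ is identical to theirs. The only difference is that you make the spanning part explicit through the idempotent $P=\iota\circ\rho$ (using $\rho\circ\iota=\mathrm{id}$), whereas the paper simply asserts the decomposition after orthogonality, leaving the rank--nullity or dimension count implicit.
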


\begin{proof}
Let $\{e_1,\cdots,e_{2n}\}$ be a local orthonormal frame on $X$. For any $\varphi \in \Gamma(S^{\pm})$, locally we can write
$$\iota(\varphi) = -\frac{1}{2n}\sum_{i=1}^{2n}e_i \otimes \rho(e_i)\varphi.$$
Let $\psi \in \Gamma(ker\, \rho)$. One can also write $\psi = \sum e_i \otimes \psi_i$, where $\psi_i \in \Gamma(S^+)$. Then when taking inner product of $\iota(\varphi)$ and $\psi$, we have
\begin{align}
    \Big\la -\frac{1}{2n}\sum_i e_i \otimes \rho(e_i) \varphi, \sum_j e_j \otimes \psi_j\Big\ra &= -\frac{1}{2n} \sum_{i,j}\la e_i, e_j\ra \cdot \la \rho(e_i)\varphi, \psi_j\ra \nonumber \\
    & = -\frac{1}{2n}\sum_i \la \rho(e_i \varphi, \psi_i\ra \nonumber \\
    & = \frac{1}{2n}\Big\la \varphi , \sum_i \rho(e_i)\psi_i \Big\ra = 0 \nonumber
\end{align}
As a result, we have an orthogonal decomposition of $S^{\pm}\otimes TX$ as desired.
\end{proof}

On the bundle $S$, there are two natural first order differential operators: The Dirac operator $D: \Gamma(S) \to \Gamma(S)$ defined by $D =\rho \circ \nabla$, and the twistor operator $P:\Gamma(S)\to \Gamma(ker\,\rho)$ defined by $\pi \circ \nabla$ where $\pi : TX \otimes S \to ker\, \rho$ is the orthogonal projection. Taking into account of the chirality, we note that $D^{\pm} : \Gamma(S^{\pm}) \to \Gamma(S^{\mp})$, whereas $P^{\pm} : \Gamma(S^{\pm}) \to \Gamma(ker\, \rho)^{\pm}$.

Next we consider the twisted Dirac operator $\mathcal{D}^{\pm} : \Gamma(S^{\pm} \otimes TX) \to \Gamma(S^{\mp}\otimes TX)$. Follow the computations in Wang \cite{W91}, with respect to the decomposition $S^{\pm}\otimes TX = \iota(S^{\mp})\oplus (ker\, \rho)^{\pm}$, $\mathcal{D}^{\pm}$ takes the following matrix form
$$\mathcal{D}^{\pm} = \begin{pmatrix}  \frac{1-n}{n} \iota \circ D^{\pm} \circ \iota^{-1}  & 2\iota \circ P^{\pm*}\\ \frac{1}{n}P^{\mp} \circ \iota^{-1} & Q^{\pm} \end{pmatrix}.$$

\begin{Def}
The Rarita-Schwinger operator associated to the Clifford bundle $S\to X$ is defined by $Q^{\pm} :=\pi\circ \mathcal{D}^{\pm}|_{\Gamma(ker\,\rho)^{\pm}}$. 
\end{Def}

The above construction carries out similarly when $(X,g)$ is a closed smooth spin $4-$manifold. In this case, a Clifford bundle is given by a choice of $spin^c$ bundle $\mathfrak{s}_{1/2}\otimes L \to X$ where $\mathfrak{s}_{1/2}$ is the associated vector bundle to a principal $Spin(4)-$bundle $P_{Spin(4)} \to X$ and $L$ is any complex line bundle. Notation-wise, the operators defined previously all come with extra subscript: $D^{\pm}_A, \mathcal{D}^{\pm}_A, Q^{\pm}_A$ with $A$ being a unitary connection on $L$. Borrowing the language from Physics, we call sections of $\mathfrak{s}^{\pm}_{1/2}\otimes L$ positive (negative) twisted $1/2-$spinors and sections of $(ker\, \rho)^{\pm}:=\mathfrak{s}^{\pm}_{3/2}\otimes L$ positive (negative) twisted $3/2-$spinors.

Rarita-Schwinger operator $Q_A$ is a first order elliptic operator. Thus it has a well-defined index.

\begin{Prop}
Let $X$ be any closed  smooth spin $4-$manifold ($X$ does not have to be simply connected) and $Q_A$ be the Rarita-Schwinger operator associated to the twisted $3/2-$spinor bundle $\mathfrak{s}_{3/2} \otimes L \to X$ where $A$ is a unitary connection of $L$. Then the index of $Q_A$ is given by
$$index_{\CN}\,Q_A = \frac{19}{8}\,\sigma(X) + \frac{5}{2}\,c_1(L)^2.$$
\end{Prop}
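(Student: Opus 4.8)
The strategy is to reduce to the Atiyah--Singer index theorem for twisted Dirac operators via Wang's block decomposition of $\mathcal{D}_A^{\pm}$ recorded above, taken with $n=2$. Write $E^{\pm}:=\mathfrak{s}_{1/2}^{\pm}\otimes L\otimes TX=\iota(\mathfrak{s}_{1/2}^{\mp}\otimes L)\oplus(\mathfrak{s}_{3/2}^{\pm}\otimes L)$. In the matrix for $\mathcal{D}_A^{+}$, after conjugating by the bundle isometry $\iota$ and discarding the nonzero scalar $\tfrac{1-n}{n}=-\tfrac12$, the $(1,1)$-entry is the twisted Dirac operator $D_A\colon\Gamma(\mathfrak{s}_{1/2}^{-}\otimes L)\to\Gamma(\mathfrak{s}_{1/2}^{+}\otimes L)$, whose complex index is $-\operatorname{index}_{\CN}D_A$ (with $D_A$ now read as $\Gamma(\mathfrak{s}_{1/2}^{+}\otimes L)\to\Gamma(\mathfrak{s}_{1/2}^{-}\otimes L)$), while the $(2,2)$-entry is exactly $Q_A^{+}$. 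If the analytic index of $\mathcal{D}_A^{+}$ splits as the sum of the indices of its two diagonal blocks, rearranging gives the key reduction
\[
\operatorname{index}_{\CN}Q_A=\operatorname{index}_{\CN}\mathcal{D}_A+\operatorname{index}_{\CN}D_A,
\]
where $\mathcal{D}_A$ is the Dirac operator of the Clifford bundle $\mathfrak{s}_{1/2}\otimes L\otimes TX$ and $D_A$ that of $\mathfrak{s}_{1/2}\otimes L$. The $+$ sign is forced by the chirality exchange in Wang's matrix; it is exactly what produces the coefficient $\tfrac{19}{8}$ below rather than $\tfrac{21}{8}$.

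Establishing this additivity is the technical heart of the argument, and I expect it to be the main obstacle. It is not formal: $\ker\rho$ is not invariant under Clifford multiplication, so the off-diagonal entries of Wang's matrix --- built from the twistor operator $P_A$ and its formal adjoint --- are genuine first-order operators, and the principal symbol of $\mathcal{D}_A^{+}$ is not block diagonal with respect to the splitting of $E^{+}$. The route I would take is to pass to symbols: the $(1,1)$-symbol is conjugate to Clifford multiplication and hence invertible, so the standard unipotent (index-neutral) factorisation of a $2\times2$ matrix with invertible upper-left block shows that $\operatorname{index}_{\CN}\mathcal{D}_A^{+}$ equals the index of the $(1,1)$-block plus the index of the operator whose symbol is the Schur complement of the off-diagonal entries; since $\mathcal{D}_A^{+}$ is elliptic this Schur complement is invertible, i.e. defines an elliptic operator on $\mathfrak{s}_{3/2}\otimes L$, and one must identify its index with $\operatorname{index}_{\CN}Q_A^{+}$. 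The naive linear homotopy from the Schur complement to $\sigma(Q_A^{+})$ can degenerate at an intermediate parameter, so this last identification requires a more careful deformation of symbols (or a direct comparison in $K^{0}_{\mathrm{cpt}}(T^{*}X)$, using that $Q_A$ is itself elliptic so that $\sigma(Q_A^{+})$ is available as a reference point); this is where the real work sits.

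Granting the reduction, the remaining computations are routine. Since $\mathfrak{s}_{1/2}\otimes L\otimes_{\RN}TX\cong(\mathfrak{s}_{1/2}\otimes L)\otimes_{\CN}(TX\otimes\CN)$ as complex Clifford bundles, the Atiyah--Singer index theorem gives
\[
\operatorname{index}_{\CN}\mathcal{D}_A=\int_X\widehat{A}(X)\,\operatorname{ch}(L)\,\operatorname{ch}(TX\otimes\CN),\qquad
\operatorname{index}_{\CN}D_A=\int_X\widehat{A}(X)\,\operatorname{ch}(L).
\]
Using $\widehat{A}(X)=1-\tfrac1{24}p_1(TX)$, $\operatorname{ch}(L)=1+c_1(L)+\tfrac12 c_1(L)^2$, and $\operatorname{ch}(TX\otimes\CN)=4+p_1(TX)$ in degrees $\le 4$ (the odd Chern classes of a complexification are $2$-torsion, and $c_2(TX\otimes\CN)=-p_1(TX)$), one extracts the degree-four parts and applies $\int_X p_1(TX)=3\sigma(X)$ to obtain $\operatorname{index}_{\CN}\mathcal{D}_A=\tfrac52\sigma(X)+2c_1(L)^2$ and $\operatorname{index}_{\CN}D_A=\tfrac12 c_1(L)^2-\tfrac18\sigma(X)$. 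Substituting into the key reduction,
\[
\operatorname{index}_{\CN}Q_A=\Big(\tfrac52\sigma(X)+2c_1(L)^2\Big)+\Big(\tfrac12 c_1(L)^2-\tfrac18\sigma(X)\Big)=\tfrac{19}{8}\sigma(X)+\tfrac52 c_1(L)^2.
\]
As a consistency check, $\sigma(X)$ is divisible by $16$ and the intersection form of the spin manifold $X$ is even, so $c_1(L)^2$ is even and the right-hand side is an integer, as it must be.
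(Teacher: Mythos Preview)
Your strategy and the paper's converge on the same key reduction $\operatorname{index}_{\CN}Q_A=\operatorname{index}_{\CN}\mathcal{D}_A+\operatorname{index}_{\CN}D_A$, and from there your characteristic-class computations agree with the paper's line for line. The difference is in how that reduction is established. You work at the operator/symbol level, trying to show that Wang's block decomposition of $\mathcal{D}_A^{+}$ has additive index; as you correctly flag, the off-diagonal first-order entries make this a genuine analytic problem, and you leave the identification of the Schur-complement symbol with $\sigma(Q_A^{+})$ unfinished. The paper sidesteps this completely: since $Q_A$ is itself elliptic, Atiyah--Singer applies to it directly, giving
\[
\operatorname{index}_{\CN}Q_A=\Big(\frac{ch(\mathfrak{s}^{+}_{3/2}\otimes L)-ch(\mathfrak{s}^{-}_{3/2}\otimes L)}{\chi(TX)}\,\hat{\mathcal{A}}(TX)^{2}\Big)[X],
\]
and the bundle splitting $\mathfrak{s}^{\pm}_{1/2}\otimes L\otimes TX\cong(\mathfrak{s}^{\pm}_{3/2}\otimes L)\oplus(\mathfrak{s}^{\mp}_{1/2}\otimes L)$ becomes, after subtracting the two chiralities, the Chern-character identity
\[
ch(\mathfrak{s}^{+}_{3/2}\otimes L)-ch(\mathfrak{s}^{-}_{3/2}\otimes L)=(ch(\mathfrak{s}^{+}_{1/2}\otimes L)-ch(\mathfrak{s}^{-}_{1/2}\otimes L))\,(ch(TX)+1),
\]
which yields the reduction in one line. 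In effect the paper uses the decomposition at the level of $K$-theory classes rather than of operators---precisely the ``direct comparison in $K^{0}_{\mathrm{cpt}}(T^{*}X)$'' you mention parenthetically as an alternative---so the Schur-complement difficulty never arises.
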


\begin{proof}
By Theorem 13.13 in \cite{LM89} or Proposition 2.17 in \cite{AS68}, which is the equivariant version of the Atiyah-Singer index theorem for first order elliptic operator, we have
\begin{equation}
index_{\CN}\, Q_A = \left(\frac{ch(\mathfrak{s}^+_{3/2}\otimes L) - ch(\mathfrak{s}^-_{3/2}\otimes L)}{\chi(TX)}\,\hat{\mathcal{A}}(TX)^2\right)[X],
\end{equation}
where $\chi(TX)$ and $\hat{\mathcal{A}}(TX)$ are the usual Euler class and the $\hat{A}-$class of $TX$, $ch$ is the Chern character. On the other hand, using the orthogonal decomposition of $\mathfrak{s}^{\pm}_{1/2}\otimes L\otimes TX$, we obtain
\begin{equation}
ch(\mathfrak{s}^{\pm}_{1/2}\otimes L)\,ch(TX) = ch(\mathfrak{s}^{\pm}\otimes L \otimes TX) = ch(\mathfrak{s}^{\pm}_{3/2}\otimes L) + ch(\mathfrak{s}^{\mp}_{1/2}\otimes L).
\end{equation}
Subtract the two version of (2.2) from each other to get
\begin{equation}
ch(\mathfrak{s}^+_{3/2}\otimes L) - ch(\mathfrak{s}^-_{3/2}\otimes L) = (ch(\mathfrak{s}^{+}_{1/2}\otimes L)-ch(\mathfrak{s}^{-}_{1/2}\otimes L))(ch(TX) + 1).
\end{equation}
Substitute (2.3) into (2.1), recall that $\hat{\mathcal{A}}(TX) = (ch(\mathfrak{s}^+_{1/2}-ch(\mathfrak{s}^{-}_{1/2}))\,\hat{\mathcal{A}}(TX)^2/\chi(TX)$ to get
\begin{align}
index_{\CN}\,Q_A &= ch(TX\otimes L)\hat{\mathcal{A}}(TX)[X] + ch(L)\hat{\mathcal{A}}(TX)[X] \nonumber \\
&= index_{\CN}\,\mathcal{D}_A +  \left( -\frac{1}{8}\sigma(X) + \frac{1}{2}c_1(L)^2\right)
\end{align}
Let's calculate $ch(TX\otimes L) = ch(T_{\CN}X\otimes L)$. Recall:

\textbf{Fact 1.} $ch(T_{\CN}X\otimes L) = ch(\mathfrak{s}^+_{1/2} \otimes \mathfrak{s}^-_{1/2} \otimes L) = ch(\mathfrak{s}^+_{1/2})ch(\mathfrak{s}^-_{1/2})ch(L)$.

Note that $ch(\mathfrak{s}^+_{1/2})ch(\mathfrak{s}^-_{1/2}) = 4 - c_2(\mathfrak{s}^+_{1/2}) - 2c_2(\mathfrak{s}^-_{1/2})$, and $ch(L) = 1 + c_1(L) + c_1(L)^2 /2$. Thus, $ch(T_{\CN}\otimes L) = 4 + 2c_1(L)^2 - 2c_2(\mathfrak{s}^+_{1/2}) - 2 c_2(\mathfrak{s}^-_{1/2}) + 4c_1(L)$. Combine with the fact that on $4-$manifolds, $\hat{\mathcal{A}}(TX) = 1 - p_1(TX)/24$, where $p_1$ is the first Pontryagin class, we have (picking out top forms)
\begin{equation}
\hat{\mathcal{A}}(TX)ch(T_{\CN}X \otimes L) = 2c_1(L)^2 - 2c_2(\mathfrak{s}^+_{1/2}) - 2 c_2(\mathfrak{s}^-_{1/2}) - \frac{p_1(TX)}{6}.
\end{equation}

Recall:

\textbf{Fact 2.} $c_2(\mathfrak{s}^{\pm}_{1/2})[X] = -3\,\sigma(X)/4 \mp \chi(X)/2$.

Since $p_1(TX)/3[X] = \sigma(X)$, use the above formulas, we rewrite $(2.5)$ as
\begin{align}
&\hat{\mathcal{A}}(TX)ch(T_{\CN}X \otimes L)[X]= 2c_1(L)^2[X] - 2\left(-\frac{3}{4}\,\sigma(X) - \frac{\chi(X)}{2}\right)+\\
&-2\left(-\frac{3}{4}\sigma(X) + \frac{\chi(X)}{2}\right) - \frac{\sigma(X)}{2} = 2c_1(L)^2[X] + \frac{5}{2}\sigma(X).
\end{align} 
Therefore, $(2.4)$ becomes $index_{\CN}\,Q_{A} = 19\,\sigma(X)/8 + 5c_1(L)^2/2$.
\end{proof}

\begin{Rem}
When $L = \underline{\CN}$ and $A$ is the trivial connection, from Proposition 2.2 we have $index_{\CN}\,Q = 19\,\sigma(X)/8$. 
\end{Rem}

\subsection{Rarita-Schwinger-Seiberg-Witten equations} By Lemma 4.55 in \cite{S14}, the Clifford multiplication $\rho$ is an isometry and an isomorphism from $i\Lambda^+T^*X \to i\mathfrak{su}(\mathfrak{s}^+_{1/2})$. Let $\mu$ be a quadratic map defined by $\mu: \mathfrak{s}^{+}_{1/2} \otimes L \otimes TX = \mathfrak{s}^{+}_{1/2} \otimes L\otimes T^*X = \text{Hom}(TX, \mathfrak{s}^+_{1/2} \otimes L) \to \mathfrak{g}_L \otimes \mathfrak{su}(\mathfrak{s}^+_{1/2}) = i\mathfrak{su}(\mathfrak{s}^+_{1/2}),$
$$\mu(\psi) = \psi \psi^* - \frac{1}{2}\text{tr}(\psi \psi^*)1_{\mathfrak{s}^{+}_{1/2}\otimes L}.$$

Besides looking for twisted Rarita-Schwinger fields, we impose a curvature condition for $A$. What follow are referred as the Rarita-Schwinger-Seiberg-Witten equations (RS-SW),
\begin{equation}
Q^{+}_{A} \psi = 0, \,\,\, F^+_{A} = \rho^{-1}(\mu(\psi)).
\end{equation}

If $\nabla_{A_0}$ is a fixed referenced unitary connection of $L$, then every other connection $\nabla_A = \nabla_{A_0} + a$, where $a \in i\Omega^1(X)$. (2.8) can also be re-written as
\begin{equation}
Q^+_{A_0}\psi + \pi^-(a\cdot \psi)  = 0,\,\,\, d^{+}a + F^+_{A_0} = \rho^{-1}(\mu(\psi)).
\end{equation}

We define $\mathcal{C} = \Gamma(\mathfrak{s}^+_{3/2} \otimes L) \oplus i\Omega^1(X)$ to be the configuration space of the RS-SW equations. $\mathcal{R} = \Gamma(\mathfrak{s}^-_{3/2} \otimes L) \oplus i\Omega^+(X)$ is denoted by its range space. The gauge group $\mathcal{G}=\,Maps(X,U(1))$ acts on $\mathcal{C}$ by pulling-back the connections and left multiplying by conjugation on the twisted $3/2-$spinors. $\mathcal{G}$ also acts on $i\Omega^+(X)$ trivially. Just as in the standard Seiberg-Witten theory, the following lemma tells us that solutions to (2.9) are preserved under the gauge group action.

\begin{Lemma}
If $(\psi, A_0+a)$ is a solution to (2.9), then $h \cdot (\psi, A_0+a)$ is also a solution for any $h \in \mathcal{G}=Maps\,(X,U(1))$. 
\end{Lemma}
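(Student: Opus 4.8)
Recall that $h\in\mathcal{G}=\mathrm{Maps}(X,U(1))$ acts on a configuration by $h\cdot(\psi,A_0+a)=(h\psi,\,A_0+a-h^{-1}dh)$: it multiplies the twisted $3/2$-spinor pointwise by $h$ and shifts the connection $1$-form by the closed imaginary $1$-form $-h^{-1}dh$, while acting trivially on $i\Omega^+(X)$ (the exact normalization of the shift is fixed once the convention for the $\mathcal{G}$-action on connections is fixed). The plan is to check that each of the two equations of (2.9) is carried to itself by this action; the verification splits naturally into a ``Dirac'' part for the first equation and a ``curvature/quadratic'' part for the second.

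For the first equation the key point is a Leibniz identity. Since $\mathfrak{s}^+_{3/2}\otimes L=(\ker\rho)^+$ is a subbundle of $\mathfrak{s}^+_{1/2}\otimes L\otimes TX$ on which $\mathcal{G}$ acts through the $L$-factor, the subbundle is preserved and the orthogonal projections $\pi^{\pm}$ commute with multiplication by $h$; in particular $h\psi$ is again a positive twisted $3/2$-spinor. From $\mathcal{D}_{A_0}(h\psi)=\rho(dh)\psi+h\,\mathcal{D}_{A_0}\psi$ and $dh=h\,(h^{-1}dh)$ one gets $Q^+_{A_0}(h\psi)=h\,Q^+_{A_0}\psi+h\,\pi^-\bigl((h^{-1}dh)\cdot\psi\bigr)$, while directly $\pi^-\bigl((a-h^{-1}dh)\cdot h\psi\bigr)=h\,\pi^-(a\cdot\psi)-h\,\pi^-\bigl((h^{-1}dh)\cdot\psi\bigr)$. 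Adding these two relations gives $Q^+_{A_0}(h\psi)+\pi^-\bigl((a-h^{-1}dh)\cdot h\psi\bigr)=h\bigl(Q^+_{A_0}\psi+\pi^-(a\cdot\psi)\bigr)=0$, so the first equation of (2.9) holds for the transformed configuration.

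For the second equation I would use two elementary facts. First, $h^{-1}dh$ is closed, so $d^+(a-h^{-1}dh)+F^+_{A_0}=d^+a+F^+_{A_0}$ and the left-hand side is unchanged. Second, $\mu$ is invariant under rescaling the spinor by a function of modulus one: $\mu(h\psi)=(h\psi)(h\psi)^*-\tfrac12\,\mathrm{tr}\bigl((h\psi)(h\psi)^*\bigr)1=|h|^2\mu(\psi)=\mu(\psi)$, and $\rho^{-1}$ commutes with the (trivial) $\mathcal{G}$-action on $i\Lambda^+T^*X$ and on $i\mathfrak{su}(\mathfrak{s}^+_{1/2})$; hence $\rho^{-1}(\mu(h\psi))=\rho^{-1}(\mu(\psi))$ and the right-hand side is unchanged too. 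Together with the previous paragraph this proves the lemma.

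The only point requiring a little care — the closest thing here to a ``main obstacle'' — is the bookkeeping in the Dirac part: one must use that $\pi^-$ is $C^\infty(X,\mathbb{C})$-linear, so it commutes with multiplication by $h$, and then verify that the extra term $h\,\pi^-\bigl((h^{-1}dh)\cdot\psi\bigr)$ produced by the Leibniz rule cancels \emph{exactly} against the term generated by the shift $a\mapsto a-h^{-1}dh$ in the connection. Everything else is a pointwise computation, and the signs and normalizations are pinned down, exactly as in the standard Seiberg--Witten setting, once the $\mathcal{G}$-action on connections is fixed.
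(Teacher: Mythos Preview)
Your proof is correct and follows essentially the same approach as the paper: a Leibniz-rule computation for the Rarita--Schwinger part showing the gauge-shift term cancels, together with $d^+$-closedness of the logarithmic derivative and $|h|=1$ for the curvature/quadratic part. The only cosmetic difference is that the paper invokes simple connectedness of $X$ to write $h=e^{iu}$ globally and works with $idu$, whereas you work directly with the closed $1$-form $h^{-1}dh$; your formulation is marginally more general but otherwise identical.
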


\begin{proof}
Since $X$ is simply connected, $h$ has global logarithm, i.e, there is a smooth real-valued function $u$ such that $h = e^{iu}$. Then the action of $h$ on a configuration can be re-described as
$$h\cdot(\psi, A_0+a) = (e^{-iu}\psi, A_0 + a + idu).$$
We easily see that $d^+(a + idu) + F^+_{A_0} - \rho^{-1}(\mu(e^{-iu}\psi)) = d^+a+F^+_{A_0} - \rho^{-1}(\mu(\psi))$. At the same time, 
\begin{align}
Q^+_{A_0}(e^{-iu}\psi) &+ \pi^{-}(a\cdot e^{-iu}\psi + idu\cdot e^{-iu}\psi) = e^{-iu}Q^+_{A_0}\psi -ie^{-iu} \pi^-(du\cdot \psi) +\\
&+ e^{-iu}\pi^{-}(a\cdot \psi)  + ie^{-iu}\pi^-(du \cdot \psi) = e^{-iu}(Q^+_{A_0}\psi + \pi^{-}(a\cdot \psi)).
\end{align}
Therefore, obviously if $(\psi,A_0+a)$ solves (2.9), then so does $(e^{-iu}\psi, A_0 + a + idu)$. 
\end{proof}

RS-SW equations are in an abelian gauge theory. And just as the Seiberg-Witten equations, there is a gauge-fixing condition for the RS-SW equations. Recall that with respect to a referenced unitary connection $A_0$, $A$ is in the Coulomb gauge if $d^*(A-A_0) = 0$.

\begin{Lemma}
Let $(\psi, A_0+a)$ be any solution to (2.9). Then its orbit by the action of $\mathcal{G}$ can be represented uniquely up to a constant by another solution where the connection part is in the Coulomb gauge.
\end{Lemma}

\begin{proof}
Suppose $h=e^{iu}$ is an element of $\mathcal{G}$ such that $A_0+a+idu$ is in the Coulomb gauge. That means to show the existence of a connection that is in the Coulomb gauge, we have to solve $d^*(a+idu)=0$. Note that since $u$ is a smooth function, $d^*u = 0$. Therefore, solving for $u$ in $d^*(a+idu)=0$ is equivalent to solving for $u$ in 
\begin{equation}
d^*du+dd^*u=\Delta u = d^*(-ia)
\end{equation}
But by Hodge's decomposition theorem, $\Omega^0(X) = ker\, \Delta \oplus d^*\Omega^1(X)$, which means that a solution $u$ for (2.12) always exists up to a constant. 
\end{proof}

\begin{Rem}
Lemma 2.5 and Lemma 2.6 should also hold when $X$ is not simply connected. However, the proof of gauge slice condition is much simpler in the simply connected case due to the existence of global exponential lift of any gauge transform.
\end{Rem}

From Lemma 2.5 and Lemma 2.6, we see that after the gauge group action and the gauge fixing condition, RS-SW becomes a non-linear system of elliptic differential equations
\begin{equation}
Q^+_{A_0}\psi + \pi^-(a\cdot \psi) = 0,\, \, \, d^*a = 0, \, \, \, d^+ a + F^+_{A_0} = \rho^{-1}(\mu(\psi)).
\end{equation}
When $L=\underline{\CN}$ and $A_0$ is chosen to be the trivial connection, equations (2.13) become
\begin{equation}
Q^+\psi + \pi^{-}(a\cdot \psi) = 0,\,\,\, d^*a = 0,\,\,\, d^+a = \rho^{-1}(\mu(\psi)).
\end{equation}
The configuration space $\mathcal{C}$ and the range space $\mathcal{R}$ of (2.14) become $\Gamma(\mathfrak{s}^+_{3/2})\oplus i\Omega^1(X)$ and $\Gamma(\mathfrak{s}^{-}_{3/2})\oplus i\Omega^0(X)\oplus i\Omega^{+}(X)$. Denote $\mathcal{F} = \mathscr{D}\oplus \mathscr{Q} : \mathcal{C} \to \mathcal{R}$ by the functional of (2.14), where $\mathscr{D} = Q^+ \oplus(d^+\oplus d^*)$ and $\mathscr{Q}(\psi, a) = (\pi^-(a\cdot \psi), -\rho^{-1}(\mu(\psi)), 0)$. Note that $\mathcal{M}_g = \mathcal{F}^{-1}(0)$. For the remaining of the paper, we focus only on this functional $\mathcal{F}$.

\subsection{$Pin(2)-$ equivariance of $\mathcal{F}$}Let $V$ be a real $4-$dimensional vector space. $V$ can be given a quaternionic structure by considering each vector $v$ in $V$ as the following $2 \times 2-$complex matrix
$$v = \begin{pmatrix} a+bi & -c+di\\ c+di & a-bi \end{pmatrix} := \begin{pmatrix} z & -\overline{w} \\ w & \overline{z} \end{pmatrix}$$
This turns $V$ into an algebra where the multiplication is defined by multiplication of these complex matrices. Furthermore, the usual Euclidean inner product on $V$ agrees with the Frobenius inner product on $V$ equipped with the quaternionic structure. We note that if $v$ is not trivial, then $v = |v|^2 \cdot (v/ |v|^2)$. And since $|v|^2 = \det\, v$, we can also think of $V$ as $\RN\cdot SU(2)$. 

Let $W = W^+ \oplus W^-$, where $W^{\pm}$ each is a copy of $\mathbb{C}^2$. We define an $\mathbb{R}-$linear map $\rho: V \to \text{End}(W)$ by
$$\rho(v) = \begin{pmatrix} 0 & -\overline{v}^t \\ v & 0 \end{pmatrix}.$$
Let $\{e_1, e_2, e_3, e_4\}$ be any orthonormal basis of $V$. Then $\rho(e_i)\rho(e_j) + \rho(e_j)\rho(e_i) = -2 \delta_{ij}$. So $\rho$ is a Clifford multiplication. Furthermore, $\rho$ exchanges the chirality of $W$. In particular if $\psi \in W^{+}$, then $\rho(v)\psi = v\,\psi$; where we view $v\, \psi$ as an element of $W^-$. Also note that $W^{\pm}$ corresponds to $\pm 1-$eigenspaces of the map $-\rho(e_1)\rho(e_2)\rho(e_3)\rho(e_4)$. 

$\CN^2$ is also naturally endowed with a quaternionic structure. Let $\psi \in \CN^2$, and write $\psi = (\psi_1 \, \, \psi_2)^t$. We can view $\psi$ as a quaternion number in two different ways, either $\psi = \psi_1 + \psi_2 j$ or 
$$\psi = \begin{pmatrix} \psi_1 & -\overline{\psi_2} \\ \psi_2 & \overline{\psi_1} \end{pmatrix}.$$

Now consider the expression $\rho(v)\psi$, where $v \in V$ and $\psi \in W^+$. As observed above, we have
$$\rho(v)\psi = \begin{pmatrix} z & -\overline{w} \\  w & \overline{z} \end{pmatrix}  \begin{pmatrix} \psi_1 \\ \psi_2 \end{pmatrix} = \begin{pmatrix} z\psi_1 -\overline{w}\psi_2 \\ w\psi_1 + \overline{z} \psi_2\end{pmatrix} \in W^-.$$

The above vector then can be identified with the matrix
$$\begin{pmatrix} z\psi_1 - \overline{w}\psi_2 & -\overline{w\psi_1} - z\overline{\psi_2} \\ w\psi_1 + \overline{z}\psi_2 & \overline{z\psi_1} - w \overline{\psi_2} \end{pmatrix} =  \begin{pmatrix} z & -\overline{w} \\ w & \overline{z} \end{pmatrix}  \begin{pmatrix} \psi_1 & -\overline{\psi_2} \\ \psi_2 & \overline{\psi_1} \end{pmatrix}, $$
which is exactly the multiplication of $v$ and $\psi$ in the quaternion numbers. Hence, the representation (Clifford multiplication) is exactly the multiplication in the quaternions.

Consider the group $Spin(4) = SU(2) \times SU(2)$. Let $(p_{-}, p_{+}) \in \text{Spin}(4)$ arbitrarily. We define the following group homomorphism $Spin(4) \to GL(V)$ given by
$$ (p_{-}, p_{+}) \mapsto (v \mapsto p_{-} v p_{+}^{-1}).$$

The map $v \mapsto p_{-}vp_{+}^{-1}$ is norm preserving; thus the above homomorphism is actually $Spin(4) \to SO(V)$. Either way, this is a representation of $Spin(4)$ on $V$. There is another representation of $Spin(4)$ onto $W$ defined by 
$$(p_{-}, p_{+}) \mapsto ((\psi, \phi) \mapsto (p_{-}\phi, p_{+}\psi)).$$

This is the spinor representation of the spin group. When $Spin(4)$ only acts on $W^+$, the above action agrees with the Clifford multiplication $\rho$ restricted to only unit vectors in $V$ (as multiplication in the quaternions). 

The group $Pin(2)$ is defined to be the double cover of $O(2)$
$$Pin(2) := \Big\{ \begin{pmatrix} e^{i\theta} & 0 \\  0 & e^{-i\theta} \end{pmatrix}, \begin{pmatrix} 0 & e^{i\theta} \\  e^{-i\theta} & 0 \end{pmatrix} : \theta \in \RN\Big\}.$$
Topologically, $Pin(2)$ is the disjoint union of two unit circles, $Pin(2) = S^1 \sqcup j\cdot S^1$. But we can also view $S^1 \sqcup j \cdot S^1 \subset SU(2)$. Let $p_0 \in Pin(2)$ and $\psi, \phi \in W^{\pm}$, respectively. There is another action on $W^{\pm}$ by $Pin(2)$ given by $\psi \mapsto \psi p_0^{-1}$ and $\phi \mapsto \phi p_0^{-1}$. We would like to emphasize again that every action we have listed so far is defined as multiplication in the quaternions. Following Furuta's notations \cite{F01}, we obtain the following $Spin(4)\times Pin(2)-$modules:
\begin{itemize}
	\item $V:={}_{-}\mathbb{H}_{+}$ is the module where the action of $(p_{-}, p_{+}, p_0)$ is given by $p_{-}vp_{+}^{-1}$. 
	\item $W^+ := {}_{+}\mathbb{H}$ is the module where the action is given by $p_{+}\psi p_0^{-1}$.
	\item $W^{-} := {}_{-}\mathbb{H}$ is the module where action is defined by $p_{-}\phi p_0^{-1}$.
	\item $\tilde{\mathbb{R}}$ is the $Pin(2)-$module where the actions are $e^{i\theta}\cdot x = x$ and $j \cdot x = -x$. ${}_{+}\mathbb{H}_{+}$ is the module where the action is given by $p_{+} v p_{+}^{-1}$ so that $\tilde{\mathbb{R}}\oplus \Lambda^{+}V$ is identified with ${}_{+}\mathbb{H}_{+}$. 
\end{itemize}

On our manifold $X$, with respect to its only $Spin(4)-$bundle $P_{Spin(4)}\to X$, the above actions make the bundles $T^*X = TX, \mathfrak{s}^{\pm}_{1/2}$ equivariant with respect to $Spin(4)\times Pin(2)$. Since $Pin(2) \xhookrightarrow{} Spin(4)$ by the diagonal map, these bundles are also $Pin(2)-$equivariant. Fiber-wise, they correspond to ${}_{-}\mathbb{H}_{+}$, ${}_{+}\mathbb{H}$, and ${}_{-}\mathbb{H}$, respectively, and the fiber of $\underline{\mathbb{R}}\oplus \Lambda^{+}T^{*}X$ corresponds to ${}_{+}\mathbb{H}_{+}$. We note that the Clifford multiplication $\rho : TX \otimes \mathfrak{s}^{\pm}_{1/2} \to \mathfrak{s}^{\mp}_{1/2}$ is induced exactly by the Clifford multiplication ${}_{-}\mathbb{H}_{+} \times {}_{\pm}\mathbb{H} \to {}_{\mp}\mathbb{H}$ defined previously; and the later one is definitely $Pin(2)-$equivariant. Thus, $\rho :TX \otimes \mathfrak{s}^+_{1/2}\to \mathfrak{s}^{-}_{1/2}$ is $Pin(2)-$equivariant.

\begin{Lemma}
The bundles $\mathfrak{s}^{\pm}_{3/2}$ are $Spin(4)\times Pin(2)-$equivariant.
\end{Lemma}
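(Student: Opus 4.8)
The plan is to exhibit $\mathfrak{s}^{\pm}_{3/2}$ as the kernel of an equivariant bundle map and then invoke the elementary fact that the kernel of a constant–rank equivariant morphism between $Spin(4)\times Pin(2)$–equivariant bundles is again such a bundle. By definition $\mathfrak{s}^{\pm}_{3/2}=(\ker\rho)^{\pm}$ sits inside $TX\otimes\mathfrak{s}^{\pm}_{1/2}$, and by the discussion preceding the lemma we already know $TX\cong{}_{-}\mathbb{H}_{+}$ and $\mathfrak{s}^{\pm}_{1/2}\cong{}_{\pm}\mathbb{H}$ carry compatible $Spin(4)\times Pin(2)$–actions. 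So the first step is to endow $TX\otimes\mathfrak{s}^{\pm}_{1/2}$ with the tensor–product action; this is $Spin(4)\times Pin(2)$–equivariant because a tensor product of equivariant bundles is equivariant.

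The second step is to verify that $\rho\colon TX\otimes\mathfrak{s}^{\pm}_{1/2}\to\mathfrak{s}^{\mp}_{1/2}$ is $Spin(4)\times Pin(2)$–equivariant. The $Pin(2)$–equivariance was recorded above. For the $Spin(4)$–factor it suffices to check it fiberwise, where $\rho$ is quaternion multiplication: for $(p_-,p_+,p_0)$ one has $(p_-vp_+^{-1})(p_+\psi p_0^{-1})=p_-(v\psi)p_0^{-1}$ on ${}_{+}\mathbb{H}$, and symmetrically on ${}_{-}\mathbb{H}$, so $\rho$ intertwines ${}_{-}\mathbb{H}_{+}\otimes{}_{\pm}\mathbb{H}\to{}_{\mp}\mathbb{H}$. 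Consequently $\ker\rho$ is preserved by the $Spin(4)\times Pin(2)$–action, and since by Lemma 2.1 it is a genuine (constant–rank) subbundle, it is an equivariant subbundle. The final step is to pass to the chirality summands: the splitting $TX\otimes\mathfrak{s}_{1/2}=(TX\otimes\mathfrak{s}^{+}_{1/2})\oplus(TX\otimes\mathfrak{s}^{-}_{1/2})$ is equivariant, and $(\ker\rho)^{\pm}=\ker\rho\cap(TX\otimes\mathfrak{s}^{\pm}_{1/2})$ is the intersection of two equivariant subbundles, hence equivariant; restricting along the diagonal $Pin(2)\hookrightarrow Spin(4)$ also yields $Pin(2)$–equivariance.

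I do not expect a substantive obstacle here; the only point deserving care is that the family $\ker\rho$ has locally constant rank, so that it is a smooth subbundle and not merely a fiberwise family of linear subspaces — but this is exactly the content of Lemma 2.1. Alternatively, since the metric is $Spin(4)\times Pin(2)$–invariant and $\iota$ is built from $\rho$ (hence equivariant), Lemma 2.1 provides an equivariant orthogonal decomposition $TX\otimes\mathfrak{s}^{\pm}_{1/2}=\iota(\mathfrak{s}^{\mp}_{1/2})\oplus\mathfrak{s}^{\pm}_{3/2}$, so that the projection $\pi^{\pm}$ is equivariant and $\mathfrak{s}^{\pm}_{3/2}=\im\pi^{\pm}$ is an equivariant quotient/sub–bundle; either route gives the claim.
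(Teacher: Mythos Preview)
Your proposal is correct and follows essentially the same approach as the paper: both arguments show that the $Spin(4)\times Pin(2)$--action preserves $\ker\rho$ by using the equivariance of $\rho$, which you verify via the quaternionic identity $(p_-vp_+^{-1})(p_+\psi p_0^{-1})=p_-(v\psi)p_0^{-1}$. The paper does this as a direct one-line computation on a local frame, while you phrase it more structurally (kernel of an equivariant constant-rank map is an equivariant subbundle) and add the observation about the chirality splitting; these are cosmetic differences only.
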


\begin{proof}
We just need to check this for positive $3/2-$spinors. Let $\psi = \sum e_{\alpha} \otimes \psi_{\alpha} \in \Gamma(\mathfrak{s}^{+}_{1/2})$, where $\{e_\alpha\}$ is a local orthonormal basis on $X$ with respect to $g$. Introduce the action of $(p_{-}, p_{+}, p_0)$ to $\psi$, we obtain
$\rho((p_{-},p_{+},p_0)\cdot \psi )= \rho\left(\sum p_{-}e_ap^{-1}_{+} \otimes p_{+}\psi_a p^{-1}_0\right) = p_{-}\rho (\psi) p^{-1}_0 = 0$.
Here we use the equivariant property of $\rho$. Therefore, if $\psi$ is a $3/2-$spinor, then so is $(p_{-},p_{+},p_0)\cdot \psi$. Note that this means that the $3/2-$spinor bundles are also $Pin(2)-$equivariance.
\end{proof} 

With abuse of notation, recall that we also denote the Clifford multiplication $TX \otimes \mathfrak{s}^{+}_{1/2} \otimes TX \to \mathfrak{s}^{-}_{1/2}\otimes TX$ by $\rho$. 

\begin{Lemma}
For $a \in \Omega^1(X)$ and $\psi \in \Gamma(\mathfrak{s}^{+}_{3/2})$, $(a, \psi) \mapsto a\cdot\psi$ is $Spin(4)\times Pin(2)-$equivariant. 
\end{Lemma}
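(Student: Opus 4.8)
\emph{Approach.} The plan is to reduce the statement to the $Spin(4)\times Pin(2)$-equivariance of the untwisted Clifford multiplication ${}_{-}\mathbb{H}_{+}\times{}_{+}\mathbb{H}\to{}_{-}\mathbb{H}$ established in the discussion preceding Lemma 2.7, together with the observation (from the proof of Lemma 2.7) that $\mathfrak{s}^{+}_{3/2}=\ker\rho$ is an equivariant subbundle of $TX\otimes\mathfrak{s}^{+}_{1/2}$. First I would recall that the twisted Clifford multiplication $TX\otimes\mathfrak{s}^{+}_{1/2}\otimes TX\to\mathfrak{s}^{-}_{1/2}\otimes TX$ is, fiberwise, the map $v\otimes(w\otimes\xi)\mapsto w\otimes\rho(v)\xi=w\otimes(v\xi)$, i.e. Clifford multiplication performed on the spinor slot with the second $TX$ factor a spectator; hence $a\cdot\psi=\sum_{\alpha}e_{\alpha}\otimes(a\psi_{\alpha})$ when $\psi=\sum_{\alpha}e_{\alpha}\otimes\psi_{\alpha}$ in a local orthonormal frame $\{e_{\alpha}\}$. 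Since this expression is independent of the choice of frame, it suffices to verify equivariance at the level of the fiber modules.

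\emph{Key computation.} Next I would run the direct check on the $Spin(4)\times Pin(2)$-modules. Under $(p_{-},p_{+},p_{0})$ we have $a\mapsto p_{-}ap_{+}^{-1}$ (the ${}_{-}\mathbb{H}_{+}$ action on $TX$) and $\psi=\sum e_{\alpha}\otimes\psi_{\alpha}\mapsto\sum(p_{-}e_{\alpha}p_{+}^{-1})\otimes(p_{+}\psi_{\alpha}p_{0}^{-1})$ (the ${}_{-}\mathbb{H}_{+}\otimes{}_{+}\mathbb{H}$ action). Applying the twisted Clifford multiplication to the transformed pair gives $\sum(p_{-}e_{\alpha}p_{+}^{-1})\otimes\big((p_{-}ap_{+}^{-1})(p_{+}\psi_{\alpha}p_{0}^{-1})\big)=\sum(p_{-}e_{\alpha}p_{+}^{-1})\otimes(p_{-}a\psi_{\alpha}p_{0}^{-1})$, where the $p_{+}^{-1}p_{+}$ cancellation is precisely the equivariance of the untwisted $\rho$. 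On the other hand the ${}_{-}\mathbb{H}_{+}\otimes{}_{-}\mathbb{H}$ action on $a\cdot\psi=\sum e_{\alpha}\otimes(a\psi_{\alpha})$ produces the same expression. Thus $(p_{-},p_{+},p_{0})\cdot(a\cdot\psi)=\big((p_{-},p_{+},p_{0})\cdot a\big)\cdot\big((p_{-},p_{+},p_{0})\cdot\psi\big)$, which is the asserted $Spin(4)\times Pin(2)$-equivariance; restricting along the diagonal embedding $Pin(2)\hookrightarrow Spin(4)$ yields the $Pin(2)$-equivariance. I would also remark that, combined with Lemma 2.7 and the fact that $\pi^{-}$ is equivariant (being orthogonal projection onto an equivariant subbundle), the composite $(a,\psi)\mapsto\pi^{-}(a\cdot\psi)$ appearing in the RS-SW functional is equivariant as well.

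\emph{Main obstacle.} There is no genuine difficulty here; the only point requiring care is bookkeeping of which quaternionic module each tensor slot carries, so that the spectator $TX$ factor transforms by ${}_{-}\mathbb{H}_{+}$ and the cancellation occurs in the correct slot, together with the frame-independence remark that legitimizes reducing the check to the fibers.
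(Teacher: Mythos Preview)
Your proof is correct and follows essentially the same approach as the paper: both write $a\cdot\psi=\sum e_{\alpha}\otimes(a\psi_{\alpha})$ in a local frame, apply the $(p_{-},p_{+},p_{0})$-action to each side, and observe the $p_{+}^{-1}p_{+}$ cancellation coming from the equivariance of the untwisted Clifford multiplication. Your additional remarks on frame-independence and on the consequence for $\pi^{-}(a\cdot\psi)$ are helpful context but not part of the paper's shorter argument.
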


\begin{proof}
Write $\psi = \sum e_{\alpha} \otimes \psi_{\alpha}$. Then $\rho(a)\psi = \sum e_{\alpha} \otimes \rho(a)\psi_{\alpha}$. First, we have
$(p_-,p_+,p_0)\cdot \rho(a)\psi = \sum p_{-}e_{\alpha}p_{+}^{-1} \otimes p_{-} a\psi_{\alpha}p_0^{-1}$.
On the other hand, 
$$\rho(p_{-}ap_{+}^{-1})\sum p_{-}e_{\alpha}p^{-1}_{+} \otimes p_{+}\psi_{\alpha}p_0^{-1} = \sum p_{-}e_{\alpha}p^{-1}_{+} \otimes p_{-}a\psi_{\alpha}p^{-1}_0.$$
The two expressions are exactly the same; hence we have the equivariant property of the quadratic map. Again, this also implies $Pin(2)-$equivariant.
\end{proof}

Recall that the orthogonal projection $\pi^{-} : \Gamma(\mathfrak{s}^{-}_{1/2} \otimes TX) \to \Gamma(\mathfrak{s}^{-}_{3/2})$ can be explicitly given by $\pi^{-}(\phi) = \phi - \iota \circ \rho(\phi) = \phi + \frac{1}{4} \sum_{\alpha} e_{\alpha} \otimes e_{\alpha}\rho(\phi)$. 

\begin{Lemma}
$\pi^{-}$ is $Pin(2)-$equivariant.
\end{Lemma}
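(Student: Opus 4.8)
The plan is to leverage the two already-established equivariance facts — that the Clifford multiplication $\rho$ (both in the form $TX\otimes\mathfrak{s}^+_{1/2}\to\mathfrak{s}^-_{1/2}$ and its twisted version $TX\otimes\mathfrak{s}^+_{1/2}\otimes TX\to\mathfrak{s}^-_{1/2}\otimes TX$) and the embedding $\iota$ (hence $\iota\circ\rho$) respect the $Spin(4)\times Pin(2)$-action — and then observe that $\pi^-=\mathrm{id}-\iota\circ\rho$ on $\Gamma(\mathfrak{s}^-_{1/2}\otimes TX)$ is a difference of $Pin(2)$-equivariant maps, so it is itself $Pin(2)$-equivariant. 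Since the statement is only about $Pin(2)$-equivariance, and $Pin(2)\hookrightarrow Spin(4)$ diagonally, it suffices to check $Spin(4)\times Pin(2)$-equivariance of each constituent and restrict.

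First I would recall from the explicit formula given just before the lemma that $\pi^-(\phi)=\phi+\tfrac14\sum_\alpha e_\alpha\otimes e_\alpha\,\rho(\phi)$, i.e.\ $\pi^-=\mathrm{id}-\iota\circ\rho$ where $\iota:\mathfrak{s}^-_{1/2}\to T^*X\otimes\mathfrak{s}^+_{1/2}\cong TX\otimes\mathfrak{s}^+_{1/2}$ is the embedding $\iota(\varphi)(Y)=-\tfrac14\rho(Y)\varphi$ from Section 2.1 (with $2n=4$). Second, I would verify that $\iota$ is $Spin(4)\times Pin(2)$-equivariant: for $\varphi\in\Gamma(\mathfrak{s}^-_{1/2})$ (fiberwise ${}_-\mathbb{H}$), one has $\iota(\varphi)=-\tfrac14\sum_\alpha e_\alpha\otimes\rho(e_\alpha)\varphi$, and applying $(p_-,p_+,p_0)$ gives $-\tfrac14\sum_\alpha p_-e_\alpha p_+^{-1}\otimes p_+\rho(e_\alpha)\varphi\,p_0^{-1}$, which by the equivariance of $\rho:{}_-\mathbb{H}_+\times{}_-\mathbb{H}\to{}_+\mathbb{H}$ (established above, in the chirality-swapped form) equals $\iota\big((p_-,p_+,p_0)\cdot\varphi\big)$; here one uses that the frame transforms as $e_\alpha\mapsto p_-e_\alpha p_+^{-1}$ exactly as in the proofs of Lemmas 2.7 and 2.8. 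Third, combining with Lemma 2.7 (equivariance of the twisted $\rho$) and the fact that the identity map is trivially equivariant, $\pi^-=\mathrm{id}-\iota\circ\rho$ is a composition/difference of $Spin(4)\times Pin(2)$-equivariant maps, hence $Spin(4)\times Pin(2)$-equivariant, and therefore $Pin(2)$-equivariant after restriction along the diagonal $Pin(2)\hookrightarrow Spin(4)$. One should also note the target: $\pi^-$ lands in $\mathfrak{s}^-_{3/2}$, which is $Pin(2)$-equivariant by Lemma 2.6, so the statement is well-posed.

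The only genuine point requiring care — and the likely main obstacle — is bookkeeping the chirality: $\iota$ maps $\mathfrak{s}^\mp_{1/2}$ into $TX\otimes\mathfrak{s}^\pm_{1/2}$, so the version of $\rho$ whose equivariance is invoked inside $\iota$ is $\rho:TX\otimes\mathfrak{s}^-_{1/2}\to\mathfrak{s}^+_{1/2}$ (the $p_-$-side acting on the vector factor, $p_+$ emerging on the spinor), which is the mirror of the statement proved in the text for $\rho:TX\otimes\mathfrak{s}^+_{1/2}\to\mathfrak{s}^-_{1/2}$; I would either remark that the same computation with the roles of $p_+,p_-$ interchanged gives it, or simply invoke that both chiralities of $\rho$ arise from the single $Pin(2)$-equivariant Clifford module structure ${}_-\mathbb{H}_+\times{}_\pm\mathbb{H}\to{}_\mp\mathbb{H}$ listed in the module table. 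Everything else is a routine two-line frame computation identical in form to the proofs of Lemmas 2.7 and 2.8, so I would keep the write-up short and refer back to those.
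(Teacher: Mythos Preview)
Your proposal is correct and follows essentially the same route as the paper: both arguments rest on the identity $\pi^- = \mathrm{id} - \iota\circ\rho$ and verify that the second summand is equivariant. The paper carries this out by a direct frame computation for the element $j$ alone (writing out $\pi^-(j\cdot\phi)$ and $j\cdot\pi^-(\phi)$ and matching terms), whereas you organize the same calculation more structurally by first isolating the equivariance of $\iota$ under the full $Spin(4)\times Pin(2)$-action and then composing with the already-established equivariance of $\rho$. Your version has the minor advantage of treating all of $Pin(2)$ uniformly rather than singling out $j$, but the underlying computation is the same frame argument in both cases.
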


\begin{proof}
Let $\phi = \sum e_{\alpha} \otimes \phi_{\alpha}$. We only need to check for the action of $j$. First, we note
\begin{align}
\pi^{-} (j\cdot \phi) & = j\cdot \phi - \iota \circ \rho(j\cdot \phi) = j\cdot \phi - \iota \circ \rho \left( \sum j e_{\alpha} j^{-1} \otimes j \phi_{\alpha} j^{-1} \right)\nonumber\\
& = j\cdot \phi - \iota ( j\rho(\phi)j^{-1})= j \cdot \phi + \frac{1}{4} \sum_{\beta} e_{\beta} \otimes e_{\beta}j\rho(\phi)j^{-1}\ \nonumber\\
& = j\cdot \phi + \frac{1}{4} \sum_{e_{\beta} \neq j} e_{\beta} \otimes e_{\beta}j\rho(\phi)j^{-1} -\frac{1}{4} j \otimes \rho(\phi)j^{-1}.\nonumber
\end{align}
On the other hand, we have
\begin{align}
j \cdot \pi^{-}(\phi) & = j \cdot (\phi - \iota \circ \rho(\phi)) = j \cdot \phi + \frac{j}{4} \sum_{\beta} e_{\beta} \otimes e_{\beta}\rho(\phi)\nonumber\\
&= j \cdot \phi + \frac{1}{4} \sum_{\beta} j e_{\beta} j^{-1} \otimes j e_{\beta} \rho(\phi) j^{-1} \nonumber\\
& = j\cdot \phi +\frac{1}{4} \sum_{e_{\beta} \neq j} e_{\beta} \otimes e_{\beta} j \rho(\phi)j^{-1} - \frac{1}{4} j \otimes \rho(\phi)j^{-1}.\nonumber
\end{align}
As a result, we have $\pi^{-} (j\cdot \phi) = j \cdot \pi^{-}(\phi)$.
\end{proof}

Let $\psi \in \mathfrak{s}^+_{3/2}$. Consider the action of $(p_-,p_+,p_0)$ on $\psi$ to obtain $p_+ \psi p_0^{-1}$. Passing through quadratic map $\mu$, we have
$$p_+ \psi p^{-1}_0 p_0 \psi^* p^{-1}_+ - \frac{1}{2}\text{tr}(p_+ \psi p^{-1}_0 p_0 \psi^* p^{-1}_+ )1 = p_+ \psi \psi^* p^{-1}_+ - \frac{1}{2}\text{tr}(\psi \psi^*) p_+ p^{-1}_+ = p_+ \mu(\psi)p^{-1}_+ $$

This shows that

\begin{Lemma}
$\mu : \mathfrak{s}^+_{3/2} \to i\mathfrak{su}(\mathfrak{s}^+_{1/2})$ is equivariant with respect to $\text{Spin}(4)\times Pin(2)$; hence $Pin(2)-$equivariant.
\end{Lemma}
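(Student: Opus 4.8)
The plan is to verify the identity fiberwise. Since $\mu$ is a fiberwise quadratic bundle map and the $\text{Spin}(4)\times Pin(2)$-structures on $\mathfrak{s}^+_{3/2}$ and on $i\mathfrak{su}(\mathfrak{s}^+_{1/2})$ are both induced from the standard quaternionic modules of the previous subsection, it suffices to check $\mu\bigl((p_-,p_+,p_0)\cdot\psi\bigr) = (p_-,p_+,p_0)\cdot\mu(\psi)$ pointwise, i.e. for the model quadratic map on the representation spaces. On the target the relevant action is the one transported by the isometry $\rho$ from $i\Lambda^+ T^*X$ onto $i\mathfrak{su}(\mathfrak{s}^+_{1/2})$, together with the identification $\tilde{\mathbb{R}}\oplus\Lambda^+ V = {}_+\mathbb{H}_+$: it is conjugation $M\mapsto p_+ M p_+^{-1}$, with $p_-$ and $p_0$ entering only through this.

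The main computation is then short. Writing $\psi = \sum_\alpha e_\alpha\otimes\psi_\alpha$ in a local orthonormal frame, $\psi\psi^* = \sum_\alpha\psi_\alpha\psi_\alpha^*$ as an endomorphism of $\mathfrak{s}^+_{1/2}$, and this sum is independent of the chosen orthonormal frame; hence the transformation $e_\alpha\mapsto p_- e_\alpha p_+^{-1}$ of the $TX$-factor (an $SO(4)$ change of frame) has no effect, and everything reduces to $\psi_\alpha\mapsto p_+\psi_\alpha p_0^{-1}$. Since $p_+\in SU(2)$ and $p_0\in Pin(2)\subset SU(2)$ are unitary, $(p_+\psi_\alpha p_0^{-1})^* = (p_0^{-1})^{*}\psi_\alpha^{*}p_+^{*} = p_0\,\psi_\alpha^{*}\,p_+^{-1}$, so $(p_+\psi_\alpha p_0^{-1})(p_+\psi_\alpha p_0^{-1})^* = p_+\psi_\alpha\psi_\alpha^{*}p_+^{-1}$. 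Summing over $\alpha$ and using cyclicity of the trace and $p_+ 1 p_+^{-1} = 1$ gives
$$\mu\bigl((p_-,p_+,p_0)\cdot\psi\bigr) = p_+\Bigl(\sum_\alpha\psi_\alpha\psi_\alpha^{*}\Bigr)p_+^{-1} - \frac{1}{2}\,\text{tr}\Bigl(\sum_\alpha\psi_\alpha\psi_\alpha^{*}\Bigr)\,p_+ 1 p_+^{-1} = p_+\,\mu(\psi)\,p_+^{-1},$$
which is exactly the target action; restricting along the diagonal embedding $Pin(2)\hookrightarrow\text{Spin}(4)$ then yields the asserted $Pin(2)$-equivariance.

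I expect the delicate step to be the first paragraph rather than the algebra: namely, pinning down the $\text{Spin}(4)\times Pin(2)$-module structure on the target $i\mathfrak{su}(\mathfrak{s}^+_{1/2})$ and checking that the conjugation $M\mapsto p_+ M p_+^{-1}$ coming out of the computation really is that structure. This requires untangling the realization of $i\Lambda^+T^*X$ as a summand of ${}_+\mathbb{H}_+$ and, above all, tracking the reflection element $j\in Pin(2)$ through $\mu$: since $\mu$ is built from the spinor adjoint $*$, the interplay of $*$ with left and right quaternion multiplication is precisely where a sign could slip. Once both actions are fixed, the calculation above — which is all that the later finite-dimensional approximation needs — closes the proof.
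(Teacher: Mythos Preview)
Your proof is correct and follows essentially the same route as the paper's. The paper's argument is the one-line computation $(p_+\psi p_0^{-1})(p_0\psi^*p_+^{-1}) = p_+\psi\psi^*p_+^{-1}$, treating $\psi$ directly as a map $TX\to\mathfrak{s}^+_{1/2}$ and implicitly using that the orthogonal action on the $TX$-factor cancels in $\psi\psi^*$; you make this last point explicit by writing $\psi=\sum_\alpha e_\alpha\otimes\psi_\alpha$ and observing that $\sum_\alpha\psi_\alpha\psi_\alpha^*$ is frame-independent, which is a welcome clarification but not a different idea.
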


All aforementioned $Pin(2)$ actions have been unitary (or orthogonal). Therefore, the connections defined on the bundles are also $Pin(2)-$equivariant. In particular, $\mathcal{D}^{+}$ and $d + d^*$ are $Pin(2)-$equivariant. Combine with the previous lemmas, we obtain 

\begin{Cor}The functional $\mathcal{F}: \mathcal{C} \to \mathcal{R}$ is a $Pin(2)-$equivariant map. 
\end{Cor}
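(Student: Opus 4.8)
The plan is to assemble the statement from the equivariance facts proved throughout this subsection: I would split $\mathcal{F}$ into its coordinate maps and handle each one separately. The first thing I would record is the trivial observation that a map (linear or not) into a direct sum of $Pin(2)$-modules is $Pin(2)$-equivariant exactly when each of its coordinate maps is. Applied to $\mathcal{F} = \mathscr{D}\oplus\mathscr{Q}$, with $\mathscr{D} = Q^+\oplus(d^+\oplus d^*)$ and $\mathscr{Q}(\psi,a) = (\pi^-(a\cdot\psi),\,-\rho^{-1}(\mu(\psi)),\,0)$, this reduces the problem to checking equivariance of the maps $Q^+\psi$, $d^+a$, $d^*a$, $\pi^-(a\cdot\psi)$, $\rho^{-1}(\mu(\psi))$, together with the zero map $\mathcal{C}\to i\Omega^+(X)$.

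For the linear part I would argue as follows. The operator $Q^+ = \pi^-\circ\mathcal{D}^+|_{\Gamma(\mathfrak{s}^+_{3/2})}$ is a composition of $\mathcal{D}^+$ with $\pi^-$. Now $\mathcal{D}^+$ is $Pin(2)$-equivariant because it is the Levi-Civita covariant derivative followed by Clifford multiplication: the covariant derivative commutes with the fiberwise $Pin(2)$-action, since that action is induced by (a subgroup of) the structure group $Spin(4)$ and is therefore parallel, and Clifford multiplication is $Pin(2)$-equivariant by the earlier lemmas. Moreover $\Gamma(\mathfrak{s}^+_{3/2})$ is a $Pin(2)$-invariant subspace and $\pi^-$ is $Pin(2)$-equivariant, again by earlier lemmas, so $Q^+$ is equivariant. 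For $d^+$ and $d^*$ I would use that the $Pin(2)$-action on $i\Omega^\bullet(X)$ is fiberwise, orthogonal and parallel, so it commutes with $d$, with the Hodge star, and hence with the projection onto self-dual forms; this is precisely the assertion that $d+d^*$ is $Pin(2)$-equivariant made just above.

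For the quadratic part: the first coordinate of $\mathscr{Q}$ is $\pi^-\circ\big((\psi,a)\mapsto a\cdot\psi\big)$, a composition of two maps shown $Pin(2)$-equivariant in the preceding lemmas; the second coordinate is $\rho^{-1}\circ\mu$, with $\mu$ $Pin(2)$-equivariant by the lemma computing $p_+\mu(\psi)p_+^{-1}$ and $\rho$ (hence $\rho^{-1}$) $Pin(2)$-equivariant; the third coordinate is the zero map, trivially equivariant. Combining the two parts then gives $\mathcal{F}(g\cdot(\psi,a)) = g\cdot\mathcal{F}(\psi,a)$ for all $g\in Pin(2)$.

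The only step I expect to require genuine care — the one I would write out rather than leave implicit — is the compatibility of the $Pin(2)$-action with the first-order operators $\mathcal{D}^+$, $d^+$ and $d^*$, i.e.\ verifying that the relevant fiberwise $Pin(2)$-automorphisms are covariantly constant for the Levi-Civita connection so that they pass through $\nabla$. This is exactly the place where the hypothesis $L=\underline{\CN}$ is essential: with a nontrivial twisting connection there is no such covariantly constant extra $j$-symmetry, and the argument breaks down — consistent with the remark in the introduction that the $Pin(2)$ technique is special to the trivial $spin^c$ structure. Everything else in the proof is bookkeeping over the lemmas of this subsection.
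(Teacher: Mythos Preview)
Your proposal is correct and matches the paper's approach exactly. The paper does not give a formal proof of this corollary; it simply precedes the statement with the observation that since the $Pin(2)$-actions are unitary (orthogonal), the connections---hence $\mathcal{D}^+$ and $d+d^*$---are $Pin(2)$-equivariant, and then invokes the preceding lemmas on $\pi^-$, $(a,\psi)\mapsto a\cdot\psi$, and $\mu$, which is precisely the decomposition-and-assemble strategy you describe (your identification of the ``one step requiring care'' is exactly the sentence the paper supplies before stating the corollary).
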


\section{Finite dimensional approximation of $\mathcal{F}$}

\subsection{Linearization of $\mathcal{F}$}We consider the Sobolev completions of $\mathcal{C}_{L^2_{k}}$ and $\mathcal{R}_{L^2_{k-1}}$ by $L^2_{k}$ and $L^2_{k-1}$ norms, respectively. Naturally upon completion, they become Hilbert manifolds. As such, for the sake of convenience, we suppress their subscripts. There is a natural identification of the tangent spaces of the Hilbert manifolds $\mathcal{C}$ and $\mathcal{R}$ at a point $(\psi, a)$. Consider a parametrization of a curve $\gamma_t := (\psi_t, a_t)$ in $\mathcal{C}$ for $t \in (-\epsilon, \epsilon)$, where $\epsilon > 0$ and 
$\psi_t = \psi + t \phi, \,\,\, a_t =  a + tb$. Here $\phi$ is some other positive $3/2-$spinor  and $b$ is a purely imaginary valued $1-$form. Then a short calculation shows that the derivative of $\gamma_t$ at $t = 0$ gives us $\dot \gamma_0 = (\phi, b)$.

\begin{Lemma}
$\mathcal{F} : \mathcal{C} \to \mathcal{R}$ is a smooth mapping between separable Hilbert manifolds and its differential at a point $(\psi,a)$ is
$$d_{(\psi,a)} \mathcal{F}(\phi, b) = \left( Q^+ \phi + \pi^{-}(b\cdot \psi + a\cdot \phi),\,\, d^*b,\,\, d^+b - \rho^{-1}(\psi \phi^* + \phi \psi^*)_0\right)$$
\end{Lemma}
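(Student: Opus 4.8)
\textit{Proof proposal.} The plan is to decompose $\mathcal{F}=\mathscr{D}+\mathscr{Q}$ into the linear part $\mathscr{D}=Q^+\oplus(d^+\oplus d^*)$ and the quadratic part $\mathscr{Q}(\psi,a)=\big(\pi^-(a\cdot\psi),\,-\rho^{-1}(\mu(\psi)),\,0\big)$, to establish smoothness of each piece by elementary Banach-space calculus, and then to read off the differential. Since $\mathscr{D}$ is a first-order differential operator with smooth coefficients on the closed manifold $X$, it restricts to a bounded linear map between the $L^2_k$-completion $\mathcal{C}$ and the $L^2_{k-1}$-completion $\mathcal{R}$ (we take $k$ large enough, e.g.\ $k\geq 3$, so that $L^2_k$ is a Banach algebra on a $4$-manifold and the completions of Subsection 3.1 behave as stated). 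A bounded linear map is $C^\infty$ and equals its own Fréchet derivative at every point, which accounts for the terms $Q^+\phi$, $d^*b$, $d^+b$ in the asserted formula.

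For $\mathscr{Q}$ I would exhibit it as a bounded quadratic map. Its first component $(\psi,a)\mapsto\pi^-(a\cdot\psi)$ is bilinear in the pair $(\psi,a)$; its middle component is the quadratic form $\psi\mapsto-\rho^{-1}(\mu(\psi))=-\rho^{-1}\big((\psi\psi^*)_0\big)$ associated to the symmetric bilinear map $(\psi,\phi)\mapsto-\tfrac12\rho^{-1}\big((\psi\phi^*+\phi\psi^*)_0\big)$. In each case one composes a pointwise bundle operation of order zero with smooth coefficients — Clifford multiplication, the projection $\pi^-$ (explicitly $\pi^-(\phi)=\phi+\tfrac14\sum_\alpha e_\alpha\otimes e_\alpha\rho(\phi)$), the isometry $\rho^{-1}$, the traceless projection $(\cdot)_0$, all of which preserve Sobolev regularity — with pointwise multiplication of sections. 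By the Sobolev multiplication theorem on the compact $4$-manifold $X$ (for $k\geq 3$ the product $L^2_k\times L^2_k\to L^2_k$ is continuous), these extend to bounded bilinear maps $\mathcal{C}\times\mathcal{C}\to\mathcal{R}$.

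A bounded bilinear map $B:E\times E\to F$ on Hilbert spaces is real-analytic, in particular $C^\infty$, with $d_x\big(B(\cdot,\cdot)\big)(h)=B(h,x)+B(x,h)$; together with the linear case this shows $\mathcal{F}$ is a smooth map of separable Hilbert manifolds. To identify its derivative it then suffices to differentiate term by term along the affine curve $\gamma_t=(\psi+t\phi,\,a+tb)$ at $t=0$: bilinearity gives $\tfrac{d}{dt}\big|_0\pi^-\big((a+tb)\cdot(\psi+t\phi)\big)=\pi^-(b\cdot\psi+a\cdot\phi)$, while $\tfrac{d}{dt}\big|_0\mu(\psi+t\phi)=(\psi\phi^*+\phi\psi^*)_0$ since $\mu$ is quadratic, whence $\tfrac{d}{dt}\big|_0\big(-\rho^{-1}(\mu(\psi+t\phi))\big)=-\rho^{-1}\big((\psi\phi^*+\phi\psi^*)_0\big)$; the third component is constant. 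Assembling with $\mathscr{D}(\phi,b)=(Q^+\phi,\,d^*b,\,d^+b)$ yields exactly the stated expression for $d_{(\psi,a)}\mathcal{F}(\phi,b)$.

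The only quantitative point requiring care — and the one I expect to be the main obstacle — is the boundedness of the bilinear maps in the chosen Sobolev completions: one must fix $k$ in Subsection 3.1 large enough (or invoke the borderline multiplication/interpolation estimates valid on a compact $4$-manifold) that pointwise products of $L^2_k$ sections lie in $L^2_{k-1}$ with a uniform constant. Once this is granted everything else is formal, since all remaining ingredients are order-zero bundle maps with smooth coefficients and the rest is the calculus of bounded multilinear maps between Hilbert spaces.
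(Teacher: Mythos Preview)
Your proposal is correct and follows essentially the same route as the paper: decompose $\mathcal{F}=\mathscr{D}+\mathscr{Q}$, invoke Sobolev multiplication for smoothness, and differentiate along the affine curve $\gamma_t=(\psi+t\phi,\,a+tb)$ to obtain the stated formula. The paper's own proof is terser --- it dispatches smoothness with the single phrase ``Smoothness part comes from Sobolev regularity'' and then writes out the limit quotients explicitly --- whereas you supply the underlying reason (bounded bilinear maps on Hilbert spaces are $C^\infty$ with the expected derivative), but the argument is the same in substance.
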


\begin{proof}
Smoothness part comes from Sobolev regularity. Now let $\gamma_t = (\psi_t, a_t)$, $t\in (-\epsilon, \epsilon)$ such that $\gamma_0 = (\psi, a)$ and $\dot \gamma_0 = (\phi, b)$, where $\phi \in \Gamma(\mathfrak{s}^+_{3/2})$ and $b \in i\Omega^1(X)$. We have
\begin{align}
&d_{(\psi,a)} \mathcal{F} (\phi, b) = \lim_{t\to 0} \frac{\mathcal{F}(\gamma_t) - \mathcal{F}(\psi,a)}{t} \nonumber \\
&= \left( Q^+\phi + \lim_{t\to 0} \frac{\pi^{-} (a_t\cdot\psi_t - a\cdot\psi)}{t},\,\,  d^*b,\,\, d^+b-\lim_{t\to 0} \frac{\rho^{-1}(\mu(\psi_t) - \mu(\psi))}{t} \right).
\end{align}
\noindent
Compute the first limit on the right hand side of (3.1), we have
\begin{align}
\lim_{t\to 0} \frac{\pi^{-}(a_t\cdot\psi_t - a\cdot\psi)}{t}&= \pi^{-}\left(\lim_{t\to 0} \frac{a_t\cdot\psi_t - a_t\cdot\psi}{t} + \lim_{t\to 0} \frac{a_t - a}{t}\cdot\psi\right) \nonumber \\
& =  \pi^{-}( b\cdot\phi + a\cdot\psi).
\end{align}
\noindent
For the other limit, we see that
\begin{align}
&\frac{(\psi_t\psi^*_t)_0 -(\psi \psi^*)_0}{t} = \frac{\psi_t \psi^*_t - \psi \psi^*}{t} -\frac{1}{2} \text{tr}\left(\frac{\psi_t \psi^*_t - \psi \psi^*}{t}\right)1 \nonumber \\
& = \frac{\psi_t(\psi^*_t - \psi^*)}{t} + \frac{(\psi_t - \psi) \psi^*}{t} -\frac{1}{2} \text{tr} \Big\{ \frac{\psi_t(\psi^*_t -\psi^*)}{t} + \frac{(\psi_t -\psi)\psi^*}{t}\Big \}1.
\end{align}
Let $t \to 0$ and (3.3) approaches $(\psi \phi^* + \phi \psi^*) - \frac{1}{2} \text{tr}(\psi \phi^* + \phi \psi^*) \text{1}$, which is exactly $(\psi \phi^* + \phi \psi^*)_0$.
The above calculations give us exactly the formula for the differential of $\mathcal{F}$ at $(\psi,a) \in \mathcal{C}$. 
\end{proof}

\begin{Lemma}
Suppose $k \geq 4, R_0 >0$ and $\norm{(\psi,a)}_{L^2_k} < R_0$. The linear operator $R_{(\psi,a)} : L^2_{k}(\mathfrak{s}^{+}_{3/2}) \oplus L^2_{k}(iT^*X) \to L^2_{k-1}(\mathfrak{s}^-_{1/2}\otimes TX)$ given by $R_{(\psi,a)}(\phi,b) = b\cdot \psi + a \cdot \phi$ is a compact operator.
\end{Lemma}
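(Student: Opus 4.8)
The plan is to factor $R_{(\psi,a)}$ as a bounded multiplication operator followed by a compact Sobolev embedding. The key input is that on the closed $4$-manifold $X$ one has $\dim X/2 = 2 < k$, so $L^2_k(X)$ is a multiplication algebra; more precisely, pointwise (fiberwise bilinear) multiplication extends to a continuous bilinear map $L^2_k\times L^2_k\to L^2_k$. Writing $\psi=\sum_\alpha e_\alpha\otimes\psi_\alpha$ and $\phi=\sum_\alpha e_\alpha\otimes\phi_\alpha$ in a local orthonormal frame, we have $b\cdot\psi=\sum_\alpha e_\alpha\otimes\rho(b)\psi_\alpha$ and $a\cdot\phi=\sum_\alpha e_\alpha\otimes\rho(a)\phi_\alpha$; since $\rho$ is a smooth bundle map with fiberwise bounded norm and $\mathfrak{s}^+_{3/2}\subset TX\otimes\mathfrak{s}^+_{1/2}$ is a smooth subbundle, each summand is a pointwise contraction of the two $L^2_k$ inputs against fixed smooth tensor fields. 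Hence there is a constant $C=C(X,g,k)$ with
\[
\norm{b\cdot\psi}_{L^2_k}\le C\,\norm{b}_{L^2_k}\norm{\psi}_{L^2_k},\qquad \norm{a\cdot\phi}_{L^2_k}\le C\,\norm{a}_{L^2_k}\norm{\phi}_{L^2_k}.
\]

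First I would conclude from these estimates that, regarded as a map into $L^2_k(\mathfrak{s}^-_{1/2}\otimes TX)$, the operator $R_{(\psi,a)}$ is a well-defined bounded linear operator with
\[
\norm{R_{(\psi,a)}(\phi,b)}_{L^2_k}\le C\big(\norm{\psi}_{L^2_k}+\norm{a}_{L^2_k}\big)\norm{(\phi,b)}_{L^2_k}\le 2CR_0\,\norm{(\phi,b)}_{L^2_k},
\]
using only the hypotheses $k\ge 4$ and $\norm{(\psi,a)}_{L^2_k}<R_0$. Second, since $X$ is closed, the Rellich–Kondrachov theorem gives that the inclusion $\iota\colon L^2_k(\mathfrak{s}^-_{1/2}\otimes TX)\hookrightarrow L^2_{k-1}(\mathfrak{s}^-_{1/2}\otimes TX)$ is a compact operator.

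The operator in the statement is then the composition $\iota\circ R_{(\psi,a)}$ of a bounded operator followed by a compact operator, and such a composition is compact; this proves the lemma. There is no serious obstacle here: the only point requiring care is invoking the Sobolev multiplication estimate with the correct index range (one needs $k>\dim X/2=2$, which $k\ge4$ amply provides) and noting that the algebraic Clifford contraction and the inclusion of the $3/2$-spinor subbundle do not disturb it. Alternatively, one could replace the multiplication-algebra step by the slightly sharper continuity of $L^2_k\times L^2_{k-1}\to L^2_{k-1}$ and then compose with the compact inclusion $L^2_k\hookrightarrow L^2_{k-1}$ applied to the fixed argument $(\psi,a)$, but the route above is the shortest.
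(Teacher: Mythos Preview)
Your argument is correct and rests on the same two ingredients the paper uses: the Sobolev multiplication theorem (for $k>\dim X/2$) and the Rellich compact embedding $L^2_k\hookrightarrow L^2_{k-1}$. The packaging differs slightly: you first view $R_{(\psi,a)}$ as a bounded map into $L^2_k$ and then compose with the compact inclusion into $L^2_{k-1}$, whereas the paper runs a direct sequence argument, applying Rellich to the varying inputs $(\phi_j,b_j)$ to get $L^2_{k-1}$-convergence and then invoking the multiplication estimate $L^2_k\times L^2_{k-1}\to L^2_{k-1}$ with the fixed $(\psi,a)$ in the $L^2_k$ slot. Your factorization is the cleaner of the two and makes the uniform bound $\norm{R_{(\psi,a)}}\le 2CR_0$ explicit, which is exactly what is needed downstream in Proposition~3.4. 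One small caveat: your closing ``alternative'' route is misstated---applying the compact inclusion to the \emph{fixed} argument $(\psi,a)$ does not by itself yield a compact operator; the compactness must act on the varying input (as in the paper) or on the output (as in your main argument).
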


\begin{proof}
Let $\{(\phi_j,b_j)\}_{j=1}^{\infty}$ be a uniformly bounded sequence in $ L^2_{k}(\mathfrak{s}^{+}_{3/2}) \oplus L^2_{k}(iT^*X)$. We need to show that there is a subsequence of $\{R_{\psi,a}(\phi_j,b_j)\}_{j=1}^{\infty}$ that converges in $L^2_{k-1}(\mathfrak{s}^-_{1/2}\otimes TX)$. By the Rellich lemma, there is a subsequence of $\{b_j\}_{j=1}^{\infty}$ that converges in $L^2_{k-1}$. Without loss of generality, we may denote such subsequence by $\{b_j\}_{j=1}^{\infty}$ again and suppose that $b_j \to b$ in $L^2_{k-1}(iT^*X)$. For an arbitrary $\epsilon > 0$, let $j$ be large enough such that $\norm{b_j - b}_{L^2_{k-1}} < \epsilon/2\text{const}R_0$. Then by the Sobolev multiplication theorem, we have $\norm{(b_j -b)\cdot\psi}_{L^2_{k-1}} \leq \text{const} \norm{b_j - b}_{L^2_{k-1}}R_0 < \epsilon/2$. Similarly, for $j$ also large enough, we also have $\norm{a\cdot(\phi_j - \phi)}_{L^2_{k-1}} < \epsilon/2$. Therefore, $\{R_{(\psi,a)}(\phi_j,b_j)\}_{j=1}^{\infty}$ also converges in $L^2_{k-1}(\mathfrak{s}^-_{1/2}\otimes TX)$. It is not hard to see that $R_{(\psi,a)}$ is bounded. This shows that $R_{(\psi,a)}$ is indeed a compact operator.
\end{proof}

\begin{Lemma}
When $k \geq 4$, $R_0 >0$, the linear map $L_{\psi}: L^2_k(\mathfrak{s}^+_{3/2}) \to L^2_{k-1}(i\Lambda^+ T^*X)$ defined by $L_{\psi}(\phi) =  \rho^{-1}(\psi\phi^* + \phi \psi^*)_0$ is a compact operator for any $\psi$ such that $\norm{\psi}_{L^2_k} < R_0$.
\end{Lemma}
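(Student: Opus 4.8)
\textit{Proof proposal.} The plan is to run the same argument as in the proof of Lemma 3.3, since $L_{\psi}$ is again the composition of a fixed bundle isometry with a bilinear multiplication into which one slot is fed the fixed, bounded spinor $\psi$; indeed $L_\psi$ is (up to sign) the piece of the linearization $d_{(\psi,a)}\mathcal{F}$ isolated in Lemma 3.1. First I would note that $L_{\psi}$ is $\RN$-linear in $\phi$, and check boundedness: the assignment $(\psi,\phi)\mapsto \psi\phi^*+\phi\psi^*$ is bilinear and, by the Sobolev multiplication theorem on the closed $4$-manifold $X$, extends to a continuous map $L^2_{k-1}\times L^2_{k-1}\to L^2_{k-1}$ as soon as $k-1>2$, i.e. for $k\geq 4$. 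Composing with the pointwise-linear, norm-bounded operation $T\mapsto T_0$ (removal of the trace) and with $\rho^{-1}$, which is an isometry by Lemma 4.55 of \cite{S14} as recalled above, gives
$$\norm{L_{\psi}\phi}_{L^2_{k-1}} \leq \text{const}\,\norm{\psi}_{L^2_{k-1}}\,\norm{\phi}_{L^2_{k-1}} \leq \text{const}\,R_0\,\norm{\phi}_{L^2_{k}},$$
so $L_{\psi}$ is bounded.

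For compactness, let $\{\phi_j\}_{j=1}^{\infty}$ be a uniformly bounded sequence in $L^2_{k}(\mathfrak{s}^+_{3/2})$. Since $k\geq 4$, the Rellich lemma makes the inclusion $L^2_{k}\hookrightarrow L^2_{k-1}$ compact, so after passing to a subsequence (still denoted $\{\phi_j\}$) we may assume $\phi_j\to\phi$ in $L^2_{k-1}(\mathfrak{s}^+_{3/2})$. Applying the bilinear estimate above to the difference $\phi_j-\phi$,
$$\norm{L_{\psi}\phi_j - L_{\psi}\phi}_{L^2_{k-1}} = \norm{\rho^{-1}\big(\psi(\phi_j-\phi)^* + (\phi_j-\phi)\psi^*\big)_0}_{L^2_{k-1}} \leq \text{const}\,\norm{\psi}_{L^2_{k-1}}\,\norm{\phi_j-\phi}_{L^2_{k-1}},$$
and $\norm{\psi}_{L^2_{k-1}}\leq\norm{\psi}_{L^2_{k}}<R_0$, so the right-hand side tends to $0$. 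Hence $\{L_{\psi}\phi_j\}$ converges in $L^2_{k-1}(i\Lambda^+T^*X)$, which is exactly the compactness statement.

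I do not anticipate a genuine obstacle: the only point requiring attention is the range of Sobolev indices, namely that $k-1>\dim X/2 = 2$ so that $L^2_{k-1}(X)$ is a Banach algebra and the quadratic term loses no derivatives — this is precisely the hypothesis $k\geq 4$. Everything else (linearity, the bound on $(\,\cdot\,)_0$, the boundedness of $\rho^{-1}$) is formal, exactly as in Lemma 3.3, so I would likely state the proof in a condensed form referring back to that lemma.
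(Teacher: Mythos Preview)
Your proposal is correct and follows essentially the same route as the paper's proof: Rellich to pass to an $L^2_{k-1}$-convergent subsequence, then the Sobolev multiplication theorem to conclude convergence of $\psi\phi_j^* + \phi_j\psi^*$. The only slip is that your references to ``Lemma~3.3'' are self-referential --- you presumably mean the preceding compactness lemma (Lemma~3.2 on $R_{(\psi,a)}$).
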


\begin{proof}
It is sufficient to show that $\phi \mapsto \psi\phi^* + \phi \psi^*$ is compact. Suppose $\{\phi_j\}_{j=1}^{\infty}$ is a uniformly bounded sequence in $L^2_k(\mathfrak{s}^+_{3/2})$. The Rellich lemma tells us that there is a subsequence of $\{\phi_j\}_{j=1}^{\infty}$ that converges in $L^2_{k-1}(\mathfrak{s}^+_{3/2})$. For notational convenience, we denote such subsequence by $\{\phi_j\}_{j=1}^{\infty}$ again, and $\phi_j \to \phi$ in $L^2_{k-1}(\mathfrak{s}^+_{3/2})$. Now for any $\epsilon > 0$ and $j$ is large enough, we have $\norm{\phi_j - \phi}_{L^2_{k-1}} < \epsilon/ 2\text{const} R_0$. By the Sobolev multiplication theorem, 
\begin{align}
&\norm{\psi(\phi^*_j - \phi^*_j) + (\phi_j - \phi)\psi^* }_{L^2_{k-1}} \leq \norm{\psi(\phi^*_j - \phi^*_j)}_{L^2_{k-1}} + \norm{(\phi_j - \phi)\psi^*}_{L^2_{k-1}} \leq \nonumber \\
& \leq \text{const} R_0 \norm{\phi^*_j - \phi^*}_{L^2_{k-1}} + \text{const} \norm{\phi_j - \phi}_{L^2_{k-1}} R_0.
\end{align}
The last inequality of (3.4) is less than $\epsilon$. As a result, $L_{\psi}$ is compact as desired.
\end{proof}

Note that the formula in Lemma 3.1 is exactly $d_{(\psi,a)}\mathcal{F} = \mathscr{D} + d_{(\psi, a)} \mathscr{Q}$. Since orthogonal projection $\pi^-$ is bounded and Lemma 3.2 tells us that $R_{(\psi,a)}$ is compact whenever $\norm{(\psi,a)}_{L^2_k}<R_0$, $\pi^-\circ R_{(\psi,a)}$ is also compact. By Lemma 3.3, we immediately see that $d_{(\psi,a)}\mathscr{Q}$ is a compact operator whenever $\norm{(\psi,a)}_{L^2_k}<R_0$. Obviously, the linearization of $\mathcal{F}$ and $\mathscr{Q}$ are also $Pin(2)-$equivariant. To summarize the point of this subsection, we have the following proposition.

\begin{Prop}
The operator $d_{(\psi, a)} \mathscr{Q} = d_{(\psi,a)}\mathcal{F} - \mathscr{D} : \mathcal{C} \to \mathcal{R}$ is compact and equivariant under the $Pin(2)-$action. Furthermore, the set
$$\bigcup_{(\psi,a) \in B(0,R_0)} d_{(\psi,a)}\mathscr{Q}(\overline{B(0,1)})$$
has compact closure in $\mathcal{R}$ for any fixed $R_0 > 0$.
\end{Prop}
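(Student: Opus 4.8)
\emph{Proof plan.} The first sentence of the proposition just collects facts already in place: $d_{(\psi,a)}\mathscr{Q}$ is $Pin(2)$-equivariant because equivariance is inherited under linearization from the $Pin(2)$-equivariant maps of Corollary 2.13, and its compactness at a fixed configuration with $\norm{(\psi,a)}_{L^2_k}<R_0$ is precisely Lemma 3.2 and Lemma 3.3 together with boundedness of the bundle maps $\pi^-$ and $\rho^{-1}$, as remarked after Lemma 3.3. So the plan is aimed at the ``Furthermore'' clause, which is the \emph{uniform} form of that compactness over the ball $B(0,R_0)\subset\mathcal{C}_{L^2_k}$.

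The strategy is to realize the set in question as the image, under a single continuous map into $\mathcal{R}=\mathcal{R}_{L^2_{k-1}}$, of a fixed $L^2_k$-bounded set, and then to invoke Rellich. By Lemma 3.1 the bilinear map
$$F\big((\psi,a),(\phi,b)\big):=d_{(\psi,a)}\mathscr{Q}(\phi,b)=\big(\pi^-(b\cdot\psi+a\cdot\phi),\,0,\,-\rho^{-1}(\psi\phi^*+\phi\psi^*)_0\big)$$
is assembled from the fiberwise bilinear bundle pairings $b\cdot\psi,\ a\cdot\phi,\ \psi\phi^*,\ \phi\psi^*$ followed by the bounded operators $\pi^-$ and $\rho^{-1}$. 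The first step is to check that $F$ extends to a jointly continuous map on $L^2_{k-1}$-data: since $k\ge4$ we have $k-1>\tfrac12\dim X=2$, so the Sobolev multiplication theorem provides a constant $C=C(X,g,k)$, \emph{independent of the factors}, with $\norm{uv}_{L^2_{k-1}}\le C\norm{u}_{L^2_{k-1}}\norm{v}_{L^2_{k-1}}$ for each of these pairings; a bounded bilinear map on a Banach space is jointly continuous, and post-composition with the bounded maps $\pi^-,\rho^{-1}$ keeps it so. Hence $F:(L^2_{k-1})^{\oplus 4}\to L^2_{k-1}$ is continuous.

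Next I would note the inclusion
$$\bigcup_{(\psi,a)\in B(0,R_0)} d_{(\psi,a)}\mathscr{Q}\big(\overline{B(0,1)}\big)\ \subseteq\ F\Big(\overline{B(0,R_0)}\times\overline{B(0,1)}\Big),$$
the balls being taken in $\mathcal{C}_{L^2_k}$. The product on the right is bounded in $(L^2_k)^{\oplus 4}$, hence precompact in $(L^2_{k-1})^{\oplus 4}$ by the Rellich--Kondrachov compactness of $L^2_k\hookrightarrow L^2_{k-1}$ on the closed $4$-manifold $X$; since continuous maps send precompact sets to precompact sets, $F(\overline{B(0,R_0)}\times\overline{B(0,1)})$ is precompact in $\mathcal{R}$, and so is its subset from the statement. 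Unwound into sequences this is the expected telescoping argument: given $y_n=d_{(\psi_n,a_n)}\mathscr{Q}(\phi_n,b_n)$, extract (Rellich) a subsequence with $(\psi_n,a_n)\to(\psi,a)$, $(\phi_n,b_n)\to(\phi,b)$ in $L^2_{k-1}$, write $b_n\cdot\psi_n-b\cdot\psi=b_n\cdot(\psi_n-\psi)+(b_n-b)\cdot\psi$ and estimate by $C\norm{b_n}_{L^2_{k-1}}\norm{\psi_n-\psi}_{L^2_{k-1}}+C\norm{b_n-b}_{L^2_{k-1}}\norm{\psi}_{L^2_{k-1}}\to0$ using $\norm{b_n}_{L^2_{k-1}}\le1$, $\norm{\psi}_{L^2_{k-1}}\le R_0$ (and likewise for the other pairings), then apply $\pi^-,\rho^{-1}$.

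I do not expect a serious obstacle. The one thing that must genuinely be watched --- and the precise reason the pointwise compactness of Lemmas 3.2--3.3 upgrades to the uniform statement --- is that the constant in the Sobolev multiplication estimate depends only on $(X,g,k)$, not on the configuration, so a single $C$ serves the whole ball; this is exactly what makes the map $F$ above uniformly, rather than merely pointwise, continuous. A negligible bookkeeping point is that the $L^2_{k-1}$-limits inherit the norm bounds $R_0$ and $1$, so the limiting configuration lies in the appropriate ball and $d_{(\psi,a)}\mathscr{Q}(\phi,b)$ is again a bona fide element of the closure of the set in question.
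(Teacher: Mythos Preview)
Your proposal is correct and is essentially the natural elaboration of the paper's own approach. The paper presents Proposition~3.4 as a summary of Lemmas~3.2--3.3 with no separate proof of the uniform ``Furthermore'' clause; your argument --- realizing $d_{(\psi,a)}\mathscr{Q}(\phi,b)$ as a bilinear map jointly continuous on $L^2_{k-1}$ data via Sobolev multiplication and then feeding it a Rellich-precompact product of balls --- simply completes that clause using exactly the same ingredients (Rellich plus a configuration-independent multiplication constant) already deployed in those lemmas.
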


\subsection{Kuranishi model for $\mathcal{F}$}Since $\mathscr{D}$ is elliptic, $\mathscr{D}^*\mathscr{D}$ and $\mathscr{D}\mathscr{D}^*$ share the same real discrete spectrum $\{\lambda_1, \lambda_2,\cdots\}$ away from zero. Each eigenspace associated to an eigenvalue $\lambda$ is finite dimensional, a consequence of ellipticity of $\mathscr{D}$. Denote by $\mathcal{C}^{\lambda_n}$  the (orthogonal) direct sum of all eigenspaces of $\mathscr{D}^*\mathscr{D}$ where the eigenvalues are strictly larger $\lambda_n$, and $\mathcal{C}_{\lambda_n}$ by the direct sum of all eigenspaces of $\mathscr{D}^*\mathscr{D}$ where the eigenvalues are lesser than or equal to $\lambda_n$. Similarly, define $\mathcal{R}^{\lambda_n}$ and $\mathcal{R}_{\lambda_n}$ for $\mathscr{D}\mathscr{D}^*$. Note that both $\mathcal{C}_{\lambda_n}$ and $\mathcal{R}_{\lambda_n}$ are finite dimensional, and 
$$\mathscr{D} : \mathcal{C} = \mathcal{C}_{\lambda_n}\oplus \mathcal{C}^{\lambda_n} \to \mathcal{R}_{\lambda_n}\oplus \mathcal{R}^{\lambda_n} = \mathcal{R}$$
respects the orthogonal decomposition. Furthermore, $\mathscr{D}$ is "$L^2-$norm-preserving".

\begin{Lemma}
The map $\mathscr{D} : \mathcal{C}^{\lambda_n} \to \mathcal{R}^{\lambda_n}$ is an isomorphism between Hilbert spaces. 
\end{Lemma}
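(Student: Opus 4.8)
The plan is to use the standard correspondence, induced by $\mathscr{D}$ and $\mathscr{D}^{*}$, between the positive eigenspaces of $\mathscr{D}^{*}\mathscr{D}$ and those of $\mathscr{D}\mathscr{D}^{*}$. Since $\mathscr{D}\circ(\mathscr{D}^{*}\mathscr{D})=(\mathscr{D}\mathscr{D}^{*})\circ\mathscr{D}$, and likewise with the roles of $\mathscr{D},\mathscr{D}^{*}$ exchanged, $\mathscr{D}$ carries the $\mu$-eigenspace $E_{\mu}(\mathscr{D}^{*}\mathscr{D})$ into $E_{\mu}(\mathscr{D}\mathscr{D}^{*})$; for $\mu>0$ this restriction is an isomorphism with inverse $\tfrac{1}{\mu}\mathscr{D}^{*}$, because $\mathscr{D}^{*}\mathscr{D} v=\mu v$ forces $\tfrac{1}{\mu}\mathscr{D}^{*}(\mathscr{D} v)=v$, while $\|\mathscr{D} v\|_{L^{2}}^{2}=\langle\mathscr{D}^{*}\mathscr{D} v,v\rangle=\mu\|v\|_{L^{2}}^{2}\neq 0$ whenever $v\neq 0$. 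This is exactly why $\mathscr{D}$ respects the orthogonal splitting $\mathcal{C}=\mathcal{C}_{\lambda_{n}}\oplus\mathcal{C}^{\lambda_{n}}\to\mathcal{R}_{\lambda_{n}}\oplus\mathcal{R}^{\lambda_{n}}=\mathcal{R}$ noted above, so $\mathscr{D}$ does restrict to a map $\mathcal{C}^{\lambda_{n}}\to\mathcal{R}^{\lambda_{n}}$, and it suffices to show this restriction is injective, surjective, and a homeomorphism.

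For injectivity, $\ker\mathscr{D}=\ker\mathscr{D}^{*}\mathscr{D}$ (again from $\|\mathscr{D}\phi\|_{L^{2}}^{2}=\langle\mathscr{D}^{*}\mathscr{D}\phi,\phi\rangle$), and this kernel is the $0$-eigenspace of $\mathscr{D}^{*}\mathscr{D}$, which lies in $\mathcal{C}_{\lambda_{n}}$ since $0\le\lambda_{n}$; hence $\mathscr{D}|_{\mathcal{C}^{\lambda_{n}}}$ is injective. For surjectivity, every eigenvalue $\mu$ occurring in $\mathcal{R}^{\lambda_{n}}$ satisfies $\mu>\lambda_{n}\ge\lambda_{1}\ge 0$, so by discreteness of the spectrum $\mu\ge\lambda_{n+1}>0$; in particular $\mathscr{D}\mathscr{D}^{*}\ge\lambda_{n+1}$ on $\mathcal{R}^{\lambda_{n}}$, so $(\mathscr{D}\mathscr{D}^{*})^{-1}$ is defined there by spectral calculus, and by the eigenvalue-preservation above $\mathscr{D}^{*}$ sends $\mathcal{R}^{\lambda_{n}}$ into $\mathcal{C}^{\lambda_{n}}$. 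The operator $\mathscr{D}^{*}(\mathscr{D}\mathscr{D}^{*})^{-1}\colon\mathcal{R}^{\lambda_{n}}\to\mathcal{C}^{\lambda_{n}}$ is then a two-sided inverse to $\mathscr{D}|_{\mathcal{C}^{\lambda_{n}}}$, since $\mathscr{D}\bigl(\mathscr{D}^{*}(\mathscr{D}\mathscr{D}^{*})^{-1}w\bigr)=(\mathscr{D}\mathscr{D}^{*})(\mathscr{D}\mathscr{D}^{*})^{-1}w=w$ and injectivity gives the other side. Thus $\mathscr{D}|_{\mathcal{C}^{\lambda_{n}}}$ is a continuous linear bijection between the closed subspaces $\mathcal{C}^{\lambda_{n}}\subset\mathcal{C}$ and $\mathcal{R}^{\lambda_{n}}\subset\mathcal{R}$ — continuity because $\mathscr{D}$ is a first-order differential operator, hence bounded $L^{2}_{k}\to L^{2}_{k-1}$ — and the open mapping theorem (or the explicit inverse below) furnishes a bounded inverse, proving the lemma.

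The one point that needs care, and which I expect to be the main obstacle to a clean write-up, is reconciling the spectral bookkeeping above — most transparent for the $L^{2}$ inner product — with the fact that $\mathcal{C}^{\lambda_{n}}$ and $\mathcal{R}^{\lambda_{n}}$ actually carry the $L^{2}_{k}$ and $L^{2}_{k-1}$ topologies. I would dispatch this by recording that $\mathscr{D}^{*}\mathscr{D}$ (resp.\ $\mathscr{D}\mathscr{D}^{*}$) is a self-adjoint elliptic operator of order two on the closed manifold $X$, so the norm built from $(\mathscr{D}^{*}\mathscr{D}+1)^{k/2}$ is equivalent to the $L^{2}_{k}$ norm (similarly for $k-1$); with respect to these equivalent norms $\mathscr{D}$ acts diagonally over the eigenspaces with multipliers comparable to $(\mu/(\mu+1))^{1/2}$, and since $\mu$ ranges over $[\lambda_{n+1},\infty)$ with $\lambda_{n+1}>0$ these multipliers are bounded above and below. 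Equivalently, one checks directly that $(\mathscr{D}\mathscr{D}^{*})^{-1}$ restricted to $\mathcal{R}^{\lambda_{n}}$ gains two derivatives by elliptic regularity, so $\mathscr{D}^{*}(\mathscr{D}\mathscr{D}^{*})^{-1}$ is bounded $L^{2}_{k-1}\to L^{2}_{k}$. Either route makes $\mathscr{D}|_{\mathcal{C}^{\lambda_{n}}}$ and its inverse bounded in the correct Sobolev norms; the open-mapping argument is shorter, the explicit-multiplier estimate is more self-contained.
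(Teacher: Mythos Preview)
Your proof is correct and follows essentially the same route as the paper: the paper argues bijectivity by observing that $\mathscr{D}$ intertwines the eigenspaces of $\mathscr{D}^{*}\mathscr{D}$ and $\mathscr{D}\mathscr{D}^{*}$, with $\tfrac{1}{\lambda}\mathscr{D}^{*}$ as inverse on each $\lambda$-eigenspace, exactly as you do. Your additional discussion of the Sobolev norms is more careful than the paper, which simply asserts that bijectivity suffices.
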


\begin{proof}
We only need to show that $\mathscr{D}$ is bijective. First, let $v \in \mathcal{C}^{\lambda_n}$ such that $\mathscr{D} v = 0$. Without loss of generality, suppose that $v$ is an eigenvector associated to an eigenvalue $\lambda > \lambda_n$. Then $\mathscr{D}^* \mathscr{D} v = \lambda\,v$. This immediately implies that $\lambda \, v = 0$. And since $\lambda \neq 0$, $v = 0$. So we have $\mathscr{D}$ injective. For surjectivity, we let $w \in \mathcal{R}^{\lambda_n}$ and without loss of generality assume that $w$ is an eigenvector associated to an eigenvalue $\lambda > \lambda_n$. Then $\mathscr{D}\mathscr{D}^* w = \lambda\, w$, $(1/\lambda)\mathscr{D}^* w \in \mathcal{C}^{\lambda_n}$ and thus, we have $\mathscr{D}$ onto.
\end{proof}

Let $\pi^{\lambda_n}: \mathcal{R} \to \mathcal{R}^{\lambda_n}$ be the orthogonal projection. Lemma $3.5$ tells us that the restricted map $\mathscr{D}$ is an isomorphism so there is a well-defined bounded inverse $\mathscr{D}^{-1}: \mathcal{R}^{\lambda_n} \to \mathcal{C}^{\lambda_n}$. To get a finite dimensional approximation for $\mathcal{F}$, we use the global Kuranishi model. The idea is straightforward. We first define the following map $\phi_n : \mathcal{C} \to \mathcal{C}$ (cf. p.334, \cite{S14})
$$\phi_n := 1_{\mathcal{C}} + \mathscr{D}^{-1}\pi^{\lambda_n}\mathscr{Q}$$
We would like to show that for a fixed radius $R > 0$ and large enough $n$, $\phi_n$ is injective with a well-defined differentiable inverse. Then the map $f_n := (1-\pi^{\lambda_n})\mathcal{F}\phi^{-1}_{n}$ from $\mathcal{C}_{\lambda_n} \to \mathcal{R}_{\lambda_n}$ is a finite dimensional approximation for $\mathcal{F}$. To prove this, we need the following technical lemmas.

\begin{Lemma}[Lemma B.12, \cite{S14}]
Let $X, Y, Z$ be Banach spaces and $Q_n :  Y\to Z$ be a sequence of bounded linear operators such that
$$\lim_{n\to \infty} Q_n y = 0$$
for all $y \in Y$. Moreover, let $\{K_\alpha\}_{\alpha \in A}$ by a collection of bounded linear operators $K_\alpha : X \to Y$, indexed by a set $A$, such that the set 
$$B = \{K_{\alpha}x : \alpha \in A, x\in X, \norm{x}\leq 1\} \subset Y$$
has compact closure. Then
$$\lim_{n\to \infty} \sup_{\alpha} \norm{Q_n K_{\alpha}} = 0.$$ 
\end{Lemma}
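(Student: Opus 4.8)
\textbf{Proof strategy for Lemma 3.7 (Lemma B.12 of \cite{S14}).}

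The plan is to argue by contradiction, extracting a convergent test sequence from the compactness hypothesis and then contradicting the pointwise convergence $Q_n y \to 0$ via a diagonal/equicontinuity argument. First I would suppose the conclusion fails: there is $\varepsilon > 0$, a subsequence (still written $n$), and indices $\alpha_n \in A$ with $\norm{Q_n K_{\alpha_n}} \geq \varepsilon$ for all $n$. By definition of the operator norm, choose unit vectors $x_n \in X$, $\norm{x_n} \leq 1$, such that $\norm{Q_n K_{\alpha_n} x_n} \geq \varepsilon/2$. Set $y_n := K_{\alpha_n} x_n \in B$. By the compactness hypothesis, $\overline{B}$ is compact in $Y$, so after passing to a further subsequence we may assume $y_n \to y_\infty$ in $Y$ for some $y_\infty \in \overline{B}$.

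Next I would use the uniform boundedness of the $Q_n$. The hypothesis $Q_n y \to 0$ for every $y \in Y$ means in particular that $\{Q_n y\}_n$ is bounded for each fixed $y$, so by the Banach--Steinhaus theorem $M := \sup_n \norm{Q_n} < \infty$. Then for each $n$,
\begin{equation}
\norm{Q_n y_n} \leq \norm{Q_n (y_n - y_\infty)} + \norm{Q_n y_\infty} \leq M \norm{y_n - y_\infty} + \norm{Q_n y_\infty}.
\end{equation}
The first term tends to $0$ since $y_n \to y_\infty$, and the second tends to $0$ by the pointwise hypothesis applied to the fixed vector $y_\infty$. Hence $\norm{Q_n y_n} \to 0$, contradicting $\norm{Q_n y_n} = \norm{Q_n K_{\alpha_n} x_n} \geq \varepsilon/2 > 0$. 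This contradiction shows $\sup_\alpha \norm{Q_n K_\alpha} \to 0$.

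The only genuine subtlety — and the step I would treat most carefully — is the passage from ``$\norm{Q_n K_{\alpha_n}} \geq \varepsilon$'' to a \emph{single} sequence $y_n = K_{\alpha_n} x_n$ lying in $B$, so that one compactness extraction suffices; the indices $\alpha_n$ are allowed to vary wildly with $n$, but that is harmless precisely because all of the vectors $K_{\alpha_n} x_n$ land in the one set $B$ whose closure is compact. Everything else (Banach--Steinhaus for the uniform bound, the triangle-inequality splitting, and the two limits) is routine. Note the hypothesis does not assume the $Q_n$ are uniformly bounded a priori; that is supplied for free by Banach--Steinhaus from the pointwise convergence, and it is worth recording this explicitly since it is used in the estimate above.
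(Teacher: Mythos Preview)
Your argument is correct. The paper does not actually supply a proof of this lemma; it is quoted verbatim as Lemma~B.12 of \cite{S14} and used as a black box in the proof of Proposition~3.9. So there is no ``paper's own proof'' to compare against here, only the standard argument from the reference, and your contradiction/compactness/Banach--Steinhaus approach is exactly that standard argument.
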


\begin{Lemma}[Lemma B.2, \cite{S14}]
Let $X$ be a Banach space and $\psi : X \to X$ be a continuously differential map such that $\psi(0) = 0$ and $\norm{1 - d_x \psi} \leq \gamma$ for all $x \in X$ with $\norm{x} < R$, $\gamma < 1$ is some constant. Then the restriction of $\psi$ to $B(0, R)$ is injective, $\psi(B(0,R))$ is an open set, and $\psi^{-1} : \psi(B(0,R)) \to B(0, R)$ is continuously differentiable with $d_{y}\psi^{-1} = [d_{\psi^{-1}(y)}\psi]^{-1}$. Moreover, $B(0, R(1-\gamma)) \subset \psi(B(0,R)) \subset B(0, R(1+\gamma))$.
\end{Lemma}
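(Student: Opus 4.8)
The plan is to reduce everything to the Banach contraction mapping principle applied to $\phi := 1_X - \psi$. Since $\psi$ is $C^1$ with $\norm{1 - d_x\psi} \le \gamma$ on $B(0,R)$, we have $\norm{d_x\phi} \le \gamma$ there; as $B(0,R)$ is convex, the mean value inequality yields $\norm{\phi(x_1) - \phi(x_2)} \le \gamma\,\norm{x_1 - x_2}$ for all $x_1, x_2 \in B(0,R)$, and since $\phi(0) = -\psi(0) = 0$ we get $\norm{\phi(x)} \le \gamma\norm{x}$. Injectivity is then immediate: $\psi(x_1) = \psi(x_2)$ forces $x_1 - x_2 = \phi(x_1) - \phi(x_2)$, hence $\norm{x_1 - x_2} \le \gamma\norm{x_1 - x_2}$ and $x_1 = x_2$ because $\gamma < 1$. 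The inclusion $\psi(B(0,R)) \subset B(0,R(1+\gamma))$ is just $\norm{\psi(x)} \le \norm{x} + \norm{\phi(x)} \le (1+\gamma)\norm{x}$.

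For the opposite inclusion, fix $y$ with $\norm{y} < R(1-\gamma)$ and put $\rho := \norm{y}/(1-\gamma) < R$. The map $T_y(x) := y + \phi(x)$ sends $\overline{B(0,\rho)}$ into itself, since $\norm{T_y(x)} \le \norm{y} + \gamma\rho = \rho$, and it is a $\gamma$-contraction; its unique fixed point $x$ lies in $\overline{B(0,\rho)} \subset B(0,R)$ and satisfies $\psi(x) = y$. This gives $B(0,R(1-\gamma)) \subset \psi(B(0,R))$. Openness of $\psi(B(0,R))$ follows by localizing: for $x_0 \in B(0,R)$ choose $r > 0$ with $\overline{B(x_0,r)} \subset B(0,R)$ and apply the same fixed-point argument to $z \mapsto \psi(x_0 + z) - \psi(x_0)$, obtaining $\psi(B(x_0,r)) \supseteq B(\psi(x_0), r(1-\gamma))$.

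It remains to treat $\psi^{-1}$. From $\norm{x_1 - x_2} \le \norm{\psi(x_1) - \psi(x_2)} + \gamma\norm{x_1 - x_2}$ one sees that $\psi^{-1}$ is Lipschitz with constant $(1-\gamma)^{-1}$, hence continuous. Each $d_x\psi = 1 - d_x\phi$ is invertible by the Neumann series, with $\norm{(d_x\psi)^{-1}} \le (1-\gamma)^{-1}$. Writing $x = \psi^{-1}(y)$ and $x_0 = \psi^{-1}(y_0)$, the standard estimate on $\psi^{-1}(y) - \psi^{-1}(y_0) - (d_{x_0}\psi)^{-1}(y - y_0)$, using $C^1$-ness of $\psi$ at $x_0$ together with the Lipschitz bound on $\psi^{-1}$, shows $\psi^{-1}$ is differentiable at $y_0$ with $d_{y_0}\psi^{-1} = [d_{\psi^{-1}(y_0)}\psi]^{-1}$; continuity of $y \mapsto d_y\psi^{-1}$ then follows from continuity of $\psi^{-1}$, of $x \mapsto d_x\psi$, and of operator inversion on the open set of invertible operators. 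I do not expect a genuine obstacle here: the only points needing care are keeping the fixed point inside the \emph{open} ball $B(0,R)$ — which is why $\rho$ is chosen strictly below $R$ — and invoking the mean value inequality only on the convex set $B(0,R)$.
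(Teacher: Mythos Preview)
Your proof is correct and follows the standard contraction-mapping argument for this type of quantitative inverse function theorem. Note that the paper does not actually prove this lemma: it is quoted verbatim as Lemma~B.2 from \cite{S14} and used as a black box, so there is no in-paper argument to compare against. Your write-up supplies exactly the expected details, and your remark about keeping the fixed point inside the \emph{open} ball via the choice $\rho = \norm{y}/(1-\gamma) < R$ is the one point where care is genuinely needed.
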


\begin{Lemma}
Let $\phi_n v = u$. Then $\mathcal{F} v = 0$ if and only if $u \in \mathcal{C}_{\lambda_n}$ and $(1 - \pi^{\lambda_n})\mathcal{F} v = 0.$
\end{Lemma}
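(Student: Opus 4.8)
\emph{Proof plan.} The plan is to reduce everything to linear algebra in the two orthogonal splittings $\mathcal{C}=\mathcal{C}_{\lambda_n}\oplus\mathcal{C}^{\lambda_n}$ and $\mathcal{R}=\mathcal{R}_{\lambda_n}\oplus\mathcal{R}^{\lambda_n}$, using that $\mathscr{D}$ respects these splittings and that its restriction $\mathscr{D}\colon\mathcal{C}^{\lambda_n}\to\mathcal{R}^{\lambda_n}$ is an isomorphism (Lemma 3.5) with bounded inverse $\mathscr{D}^{-1}$. Write $P\colon\mathcal{C}\to\mathcal{C}^{\lambda_n}$ for the orthogonal projection, so $1-P$ projects onto $\mathcal{C}_{\lambda_n}$ and the condition ``$u\in\mathcal{C}_{\lambda_n}$'' means precisely ``$Pu=0$''.

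The key step is the identity
\[
\pi^{\lambda_n}\mathcal{F}v=\mathscr{D}(Pu),\qquad u=\phi_n v .
\]
To see this, recall $\phi_n v=v+\mathscr{D}^{-1}\pi^{\lambda_n}\mathscr{Q}(v)$ and observe that the correction term $\mathscr{D}^{-1}\pi^{\lambda_n}\mathscr{Q}(v)$ lies in $\mathcal{C}^{\lambda_n}$, so that $Pu=Pv+\mathscr{D}^{-1}\pi^{\lambda_n}\mathscr{Q}(v)$. Applying $\mathscr{D}$, using $\mathscr{D}(Pv)=\pi^{\lambda_n}\mathscr{D}v$ (because $\mathscr{D}$ carries $\mathcal{C}^{\lambda_n}$ into $\mathcal{R}^{\lambda_n}$ and $\mathcal{C}_{\lambda_n}$ into $\mathcal{R}_{\lambda_n}$) together with $\mathscr{D}\mathscr{D}^{-1}=1$ on $\mathcal{R}^{\lambda_n}$, gives $\mathscr{D}(Pu)=\pi^{\lambda_n}\mathscr{D}v+\pi^{\lambda_n}\mathscr{Q}(v)=\pi^{\lambda_n}\mathcal{F}v$.

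Since $\mathscr{D}$ is injective on $\mathcal{C}^{\lambda_n}$, this identity shows $\pi^{\lambda_n}\mathcal{F}v=0$ if and only if $Pu=0$, i.e. if and only if $u\in\mathcal{C}_{\lambda_n}$. Now decompose $\mathcal{F}v=\pi^{\lambda_n}\mathcal{F}v+(1-\pi^{\lambda_n})\mathcal{F}v$ into its two orthogonal components: $\mathcal{F}v=0$ iff both components vanish, and by the previous sentence the first vanishes iff $u\in\mathcal{C}_{\lambda_n}$. This is exactly the claimed equivalence. I do not expect a genuine obstacle here; the only thing requiring care is the bookkeeping — keeping the $\mathcal{C}$-side and $\mathcal{R}$-side projections apart, and noting that $\mathscr{D}^{-1}$ is only ever applied after $\pi^{\lambda_n}$, hence acts on $\mathcal{R}^{\lambda_n}$ where it is a genuine two-sided inverse. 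Note also that the argument is purely formal and uses no norm bound on $v$; the size restriction $\norm{v}<R$ enters only elsewhere, to guarantee that $\phi_n$ is invertible on the ball.
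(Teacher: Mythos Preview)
Your proof is correct and uses essentially the same ingredients as the paper's: the orthogonal splittings, the fact that $\mathscr{D}$ respects them, and the isomorphism $\mathscr{D}\colon\mathcal{C}^{\lambda_n}\to\mathcal{R}^{\lambda_n}$. Your packaging via the single identity $\pi^{\lambda_n}\mathcal{F}v=\mathscr{D}(Pu)$ handles both directions at once and is a bit cleaner than the paper's direction-by-direction argument, but the underlying computation is the same.
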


\begin{proof}
Suppose that $\mathcal{F}v = \mathscr{D}v +\mathscr{Q}v = 0$, this is equivalent to $\mathscr{Q}v = -\mathscr{D} v$. Then from $\phi_n v = u$, we obtain $v - \mathscr{D}^{-1} \pi^{\lambda_n} \mathscr{D} v = u$. Note that $\mathscr{D}$ and $\pi^{\lambda_n}$ commute. As a result, $v - \pi^{\lambda_n} v = u$. But this is equivalent to saying $u \in \mathcal{C}_{\lambda_n}$. Obviously, $(1-\pi^{\lambda_n})\mathcal{F} v = 0$. 

Conversely if $u \in \mathcal{C}_{\lambda_n}$ and $(1-\pi^{\lambda_n})\mathcal{F}v = 0$, then $u = v + \mathscr{D}^{-1}_1 \pi^{\lambda_n} (\mathcal{F} - \mathscr{D}) v = v + \mathscr{D}^{-1} \pi^{\lambda_n}\mathcal{F} v - \pi^{\lambda_n} v$. Apply $\mathscr{D}$ to both sides, we obtain $\mathscr{D} u = \mathscr{D} v + \mathcal{F} v -\pi^{\lambda_n} \mathscr{D} v = (1-\pi^{\lambda_n})\mathscr{D}v + \mathcal{F} v$. Since $u \in \mathcal{C}_{\lambda_n}$, $\mathscr{D} u \in \mathcal{R}_{\lambda_n}$. And note that $(1-\pi^{\lambda_n})\mathscr{D}v$ is clearly also in $\mathcal{R}_{\lambda_n}$. Hence $\mathcal{F} v \in \mathcal{R}_{\lambda_n} \cap \mathcal{R}^{\lambda_n}$. This is possible only if $\mathcal{F} v =0$.
\end{proof}

\begin{Prop}
Assume that $\mathcal{M}_g:= \mathcal{F}^{-1}(0)$ is compact. Then there exists an $C>0$ and $n$ large enough such that $f_n : \mathcal{C}_{\lambda_n} \to \mathcal{R}_{\lambda_n}$ defined above has no zero on $\{u \in \mathcal{C}_{\lambda_n}: \norm{u}_{L^2_{k}} = C\}$. 
\end{Prop}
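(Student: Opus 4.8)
The plan is to use the quantitative inverse function theorem (Lemma~3.7) to make $\phi_n$ a $C^1$ diffeomorphism from a fixed ball $B(0,R_0)\subset\mathcal C$ onto an open set containing a prescribed ball, so that $f_n=(1-\pi^{\lambda_n})\,\mathcal F\circ\phi_n^{-1}$ is genuinely defined on a sphere of radius $C$; then to run Lemma~3.8 backwards, so that a zero of $f_n$ on that sphere must come from a point of $\mathcal M_g$, which by compactness cannot happen once $C$ exceeds the \emph{a priori} bound for $\mathcal M_g$. Concretely, compactness of $\mathcal M_g=\mathcal F^{-1}(0)\subset\mathcal C_{L^2_k}$ gives $r_0>0$ with $\mathcal M_g\subset B(0,r_0)$; fix $R_0:=4r_0$.

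\textbf{$\phi_n$ is a diffeomorphism on $B(0,R_0)$ for $n$ large.} Differentiating the definition of $\phi_n$ gives $1_{\mathcal C}-d_x\phi_n=-\,\mathscr D^{-1}\pi^{\lambda_n}(d_x\mathscr Q)$. I apply Lemma~3.6 with $Q_n:=\mathscr D^{-1}\pi^{\lambda_n}$ and with the family $K_x:=d_x\mathscr Q$ indexed by $x\in B(0,R_0)$: the compact closure of $\{K_x\xi:\norm{x}_{L^2_k}<R_0,\ \norm{\xi}_{L^2_k}\le 1\}$ required by that lemma is exactly Proposition~3.4, while $Q_n\to 0$ strongly as $n\to\infty$ because $\mathscr D$ is norm-preserving on the isomorphism $\mathcal C^{\lambda_n}\to\mathcal R^{\lambda_n}$ of Lemma~3.5 and $\pi^{\lambda_n}\to 0$ strongly on $\mathcal R$. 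Hence $\sup_{\norm{x}_{L^2_k}<R_0}\norm{1_{\mathcal C}-d_x\phi_n}=\sup_{\norm{x}_{L^2_k}<R_0}\norm{\mathscr D^{-1}\pi^{\lambda_n}d_x\mathscr Q}\to 0$. Pick $N$ so that this supremum is $<\tfrac12$ for all $n\ge N$. Since $\mathscr Q(0)=0$ (its nonzero components $\pi^-(a\cdot\psi)$ and $\mu(\psi)$ are homogeneous of degree two in $(\psi,a)$), $\phi_n(0)=0$, so Lemma~3.7 with $\gamma=\tfrac12$, $R=R_0$ shows that for $n\ge N$ the map $\phi_n|_{B(0,R_0)}$ is injective with a $C^1$ inverse on the open set $\phi_n(B(0,R_0))$, and $B(0,2r_0)=B(0,\tfrac12R_0)\subset\phi_n(B(0,R_0))$. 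In particular $f_n:\mathcal C_{\lambda_n}\to\mathcal R_{\lambda_n}$ is defined on $\mathcal C_{\lambda_n}\cap B(0,2r_0)$.

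\textbf{No zeros on a sphere of the right radius.} Fix any $C$ with $r_0<C<2r_0$ and set $S_C:=\{u\in\mathcal C_{\lambda_n}:\norm{u}_{L^2_k}=C\}\subset B(0,2r_0)$, so $f_n$ is defined on $S_C$. Suppose $f_n(u)=0$ for some $u\in S_C$ and some $n\ge N$. Put $v:=\phi_n^{-1}(u)\in B(0,R_0)$; then $\phi_n v=u\in\mathcal C_{\lambda_n}$ and $(1-\pi^{\lambda_n})\mathcal F v=f_n(u)=0$, so Lemma~3.8 forces $\mathcal F v=0$, i.e.\ $v\in\mathcal M_g$ and hence $\norm{v}_{L^2_k}<r_0$. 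But for a solution the computation in the proof of Lemma~3.8 gives $u=\phi_n v=v-\pi^{\lambda_n}v=(1-\pi^{\lambda_n})v$, the $L^2_k$-orthogonal projection of $v$ onto $\mathcal C_{\lambda_n}$; since that projection does not increase the $L^2_k$-norm (the decomposition $\mathcal C=\mathcal C_{\lambda_n}\oplus\mathcal C^{\lambda_n}$ being orthogonal), $\norm{u}_{L^2_k}\le\norm{v}_{L^2_k}<r_0<C$, contradicting $\norm{u}_{L^2_k}=C$. Therefore $f_n$ has no zero on $S_C$ for every $n\ge N$, which is the claim.

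\textbf{Main obstacle.} The only substantive step is the uniform estimate $\sup_{B(0,R_0)}\norm{1_{\mathcal C}-d_x\phi_n}\to 0$; this is where the structure $\mathcal F=\mathscr D+\mathscr Q$ (elliptic part plus $Pin(2)$-equivariantly compact part) is genuinely used, by feeding Proposition~3.4 and the strong convergence $\mathscr D^{-1}\pi^{\lambda_n}\to 0$ into Lemma~3.6. Everything afterwards is bookkeeping of radii: one must take $R_0$ large relative to the \emph{a priori} bound $r_0$ so that a sphere $S_C$ of the prescribed radius $C$ still lies inside the image $\phi_n(B(0,R_0))$ on which $\phi_n^{-1}$, hence $f_n$, actually lives, while at the same time $C>r_0$ ensures that the image under $\phi_n$ of any genuine solution of~(2.14) cannot reach $S_C$.
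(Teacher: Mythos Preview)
Your argument follows the paper's proof almost verbatim: compactness gives a bounding radius, Proposition~3.4 feeds into Lemma~3.6 to produce the uniform estimate $\sup\norm{1_{\mathcal C}-d_x\phi_n}\to 0$, Lemma~3.7 makes $\phi_n$ a local diffeomorphism, and Lemma~3.8 converts a zero of $f_n$ into a zero of $\mathcal F$. The only divergence is in the final contradiction. The paper uses the containment $\phi_n(B(0,R))\subset B(0,3R/2)$ from Lemma~3.7 directly: if $\norm{u}=3R/2$ then $u\notin\phi_n(B(0,R))$, hence $\norm{v}=\norm{\phi_n^{-1}u}\ge R$, contradicting $\mathcal F v\neq 0$ outside $B(0,R)$. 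You instead invoke the identity $u=(1-\pi^{\lambda_n})v$ and argue $\norm{u}_{L^2_k}\le\norm{v}_{L^2_k}$ by orthogonality of the splitting $\mathcal C_{\lambda_n}\oplus\mathcal C^{\lambda_n}$.

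That last step is where you should be careful. The eigenspace decomposition of $\mathscr D^*\mathscr D$ is $L^2$-orthogonal, but the paper works throughout in the $L^2_k$-norm, and with the \emph{standard} Sobolev inner product there is no reason for eigenspaces of $\mathscr D^*\mathscr D$ to remain orthogonal; hence the spectral projector $(1-\pi^{\lambda_n})$ need not have $L^2_k\to L^2_k$ operator norm $\le 1$. This is easily repaired by passing to an equivalent $L^2_k$-norm built from powers of $\mathscr D^*\mathscr D$ (a standard maneuver in this subject), but as written the inequality $\norm{u}_{L^2_k}\le\norm{v}_{L^2_k}$ is unjustified. The paper's route via the image containment of Lemma~3.7 sidesteps the issue entirely and is the cleaner choice here.
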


\begin{proof}
With $\mathcal{M}_g$ being compact, there is an $R > 0$ such that if $\norm{v}_{L^2_k} \geq R$, we have $\mathcal{F} v \neq 0$. Note that $\phi_n(0) = 0$ and for any $v \in \mathcal{C}$, $d_{v}\phi_n = 1_{\mathcal{C}} +\mathscr{D}^{-1}\pi^{\lambda_n}d_{v}\mathscr{Q}$ so that  $d_{v}\phi_n - 1_{\mathcal{C}} = \mathscr{D}^{-1}\pi^{\lambda_n}d_{v}\mathscr{Q}$. Since $\mathscr{D}: \mathcal{C}^{\lambda_n} \to \mathcal{R}^{\lambda_n}$ is an isomorphism, we have
$$\norm{d_v\phi_n - 1_{\mathcal{C}}} = \norm{\pi^{\lambda_n}d_v\mathscr{Q}}.$$
Note that $\lim_{n\to \infty} \pi^{\lambda_n} w = 0$ for all $w \in \mathcal{R}$ and Proposition $3.4$ tells us that $\{d_v\mathscr{Q}\}_{v \in B(0,3R)}$ is a uniform family of compact operators. So, by Lemma $3.6$, we have 
$$\lim_{n \to \infty} \sup_{v \in B(0,3R)}\norm{\pi^{\lambda_n}d_v\mathscr{Q}} = 0.$$ 
This implies that for an $n$ large enough, say for all $n\geq n_0(R)$,
$$\norm{d_{v}\phi_n - 1_{\mathcal{C}}} \leq \frac{1}{2} \, \, \, \text{whenever } v \in B(0,3R).$$
Then by Lemma 3.7, $\phi_n$ restricted to $B(0,3R)$ is injective and has a well-defined continuously differentiable inverse. Furthermore, $\phi_n(B(0,3R))$ is open and 
$$\phi_n(B(0,R)) \subset B(0, 3R/2) \subset \phi_n(B(0,3R)) \subset B(0, 9R/2).$$
Suppose that there is $u\in \partial B(0,3R/2) \cap \mathcal{C}_{\lambda_n}$ such that $f_n u =0$. Let $v = \phi^{-1}_n u$ or $u = \phi_n v$. By Lemma $3.8$, $\mathcal{F} v = 0$. But since $\norm{v}_{L^2_k} = \norm{\phi^{-1}_n u}_{L^2_k} \geq R$, this leads to a contradiction. Therefore, for any $u \in \mathcal{C}_{\lambda_n}$ where $\norm{u}_{L^2_k} = C$ and $C > 3R/2$ such that $B(0,C) \subset \phi_n(B(0,3R))$, $f_n u \neq 0$.
\end{proof}

\begin{Rem}
If $\mathcal{M}_g$ is empty, one can pick any $R > 0$ in the beginning of the proof of Proposition $3.9$. In other words, one can consider any finite dimensional approximation of $\mathcal{F}$ and it would always produce a $Pin(2)-$map between spheres.
\end{Rem}

\begin{Rem}
The finite dimensional approximation construction of $\mathcal{F}$ above is similar to Furuta's in his paper \cite{F01}.
\end{Rem}

\begin{Rem}
There is another way to obtain finite dimensional approximation for $\mathcal{F}$ (cf. p. 335, \cite{S14}) and one still gets the same result in Proposition 3.9. Alternatively, we define $g_n := (1- \pi^{\lambda_n})(\mathscr{D} + \mathscr{Q}\phi_n^{-1})$. With the same hypothesis in Proposition 3.9 and a specified ball $B(0,R)$ in $\mathcal{C}$ such that $\mathcal{F}$ is never zero outside such ball indicated in the beginning of the above proof, $\phi_n$ restricted to $B(0,3R)$ is still injective and has a well-defined continuously differential inverse for large enough $n$. Then one notes that
\begin{align}
&\mathscr{D}\phi_n -\mathscr{D} - \pi^{\lambda_n}\mathcal{F} + \pi^{\lambda_n}\mathscr{D} = \mathscr{D}\phi_n - \mathscr{D} - \pi^{\lambda_n}(\mathscr{D} + \mathscr{Q}) + \pi^{\lambda_n}\mathscr{D}\nonumber\\
&=\mathscr{D}\phi_n - \mathscr{D} - \pi^{\lambda_n}\mathscr{Q} = \mathscr{D}(1_{\mathcal{C}} + \mathscr{D}^{-1}\pi^{\lambda_n}\mathscr{Q})-\mathscr{D} -\pi^{\lambda_n}\mathscr{Q} = 0.\nonumber
\end{align}
Apply $\phi_n^{-1}$ on the right of the above equations, and we get exactly that $\mathscr{D} - \mathscr{D}\phi_n^{-1}-\pi^{\lambda_n}\mathcal{F}\phi_n^{-1} + \pi^{\lambda_n}\mathscr{D} \phi_n^{-1} = 0$. Therefore,
\begin{align}
g_n &= (1- \pi^{\lambda_n})(\mathscr{D} + \mathcal{F} \phi_n^{-1}  - \mathscr{D}\phi_n^{-1})\nonumber \\
& = \mathscr{D} + \mathcal{F} \phi_n^{-1} - \mathscr{D}\phi_n^{-1} - \pi^{\lambda_n}\mathscr{D} - \pi^{\lambda_n} \mathcal{F}\phi_n^{-1} + \pi^{\lambda_n}\mathscr{D}\phi_n^{-1}\nonumber\\
& = \mathcal{F}\phi_n^{-1} - \pi^{\lambda_n}\mathscr{D}.\nonumber
\end{align}
Hence, $\mathcal{F}\phi_n^{-1} = \pi^{\lambda_n}\mathscr{D} + g_n$. So if $u \in \mathcal{C}_{\lambda_n}$ and $\norm{u}_{L^2_k} \geq 3R/2$, then $\norm{\phi_n^{-1}u}_{L^2_k} \geq R$. This implies that $\mathcal{F}\phi_n^{-1}u\neq 0$. Because $\mathscr{D} u \in \mathcal{R}_{\lambda_n}$, so $\pi^{\lambda_n}\mathscr{D} u = 0$. As a result, $g_n u \neq 0$ also.
\end{Rem}

\begin{Rem}
The hypothesis that there is a ball of a certain radius centered at $0$ in $\mathcal{C}$ such that $\mathcal{F} \neq 0$ outside such ball is not needed to obtain a finite dimensional approximation for $\mathcal{F}$. This condition only ensures that once we have a finite dimensional approximation $f_n$ of $\mathcal{F}$, $f_n \neq 0$ also on a specified sphere whose existence depends on the aforementioned ball.
\end{Rem}

Note that the map $f_n : \mathcal{C}_{\lambda_n} \to \mathcal{R}_{\lambda_n}$ in Proposition 3.9 is also $Pin(2)-$equivariant by construction. Recall that we have shown $\mathscr{D}$ to be $Pin(2)-$equivariant. Thus, $\mathcal{C}_{\lambda_n}$ and $\mathcal{R}_{\lambda_n}$ are finite dimensional representations of $Pin(2)$. 

\begin{Lemma}[cf. Lemma 4.1, \cite{F01}]
There exists positive integers $t,s,r,q$ such that
$$ \mathcal{C}_{\lambda_n} = \mathbb{H}^{t}\oplus \mathbb{R}^{s}; \, \, \, \, \, \mathcal{R}_{\lambda_n} = \mathbb{H}^{r} \oplus \mathbb{R}^{q}.$$
\end{Lemma}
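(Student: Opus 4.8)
The plan is to decompose the finite-dimensional representation spaces $\mathcal{C}_{\lambda_n}$ and $\mathcal{R}_{\lambda_n}$ into irreducible $Pin(2)$-modules and identify each summand with either a copy of $\mathbb{H}$ or of $\mathbb{R}$. First I would recall that $\mathcal{C}_{\lambda_n}$ and $\mathcal{R}_{\lambda_n}$ are built out of eigenspaces of $\mathscr{D}^*\mathscr{D}$ and $\mathscr{D}\mathscr{D}^*$ respectively, and that the underlying bundles come with their $Spin(4)\times Pin(2)$-structures: sections of $\mathfrak{s}^+_{3/2}$ and $\mathfrak{s}^-_{3/2}$ are fiberwise modelled on ${}_{+}\mathbb{H}$ and ${}_{-}\mathbb{H}$, while $i\Omega^1(X)$ is modelled on ${}_{-}\mathbb{H}_{+}$ and $i\Omega^0(X)\oplus i\Omega^+(X)$ on ${}_{+}\mathbb{H}_{+}\cong\tilde{\mathbb{R}}\oplus\Lambda^+$. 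So at the level of the ambient function spaces, the $Pin(2)$-action on the spinorial parts is by quaternionic multiplication $\psi\mapsto p_+\psi p_0^{-1}$ (resp. $\phi\mapsto p_-\phi p_0^{-1}$), which is the standard action on $\mathbb{H}$, whereas on $i\Omega^1$ the action factors through the adjoint-type action on $\mathbb{R}^4$ and on $i\Omega^0\oplus i\Omega^+$ through $\tilde{\mathbb{R}}\oplus\Lambda^+$.

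The key observation is the representation theory of $Pin(2)$: every finite-dimensional real $Pin(2)$-module decomposes as a sum of copies of the trivial module $\mathbb{R}$, the sign module $\tilde{\mathbb{R}}$, and the quaternion module $\mathbb{H}$ (with $j$ acting by right multiplication by $j$), plus the representations $\tilde{\mathbb{C}}_m$ coming from $O(2)$; but in the situations that arise here the spinor contributions are all quaternionic and the form contributions on the base-point (index) part are genuinely trivial $\mathbb{R}$. So the second step is to argue that the eigenspaces split compatibly with the $S^1\subset Pin(2)$-weight decomposition, and that the extra $j$-action pairs up the weight-$m$ and weight-$(-m)$ pieces. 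On the $3/2$-spinor side, $S^1$ acts with only nonzero weights (since the action $\psi\mapsto\psi e^{-i\theta}$ on $\mathbb{H}=\mathbb{C}^2$ has weights $\pm1$), and $j$ intertwines the two, producing copies of $\mathbb{H}$; this gives the $\mathbb{H}^t$ in $\mathcal{C}_{\lambda_n}$ and, by the same reasoning applied to $\mathfrak{s}^-_{3/2}$, the $\mathbb{H}^r$ in $\mathcal{R}_{\lambda_n}$. On the form side, one must be slightly careful: $i\Omega^1(X)\cong {}_{-}\mathbb{H}_{+}$ as a $Spin(4)$-module but under the diagonal $Pin(2)\hookrightarrow Spin(4)$ the action is $v\mapsto p\,v\,p^{-1}$, the adjoint action, which on the unit quaternions $\mathbb{R}\cdot SU(2)$ decomposes $\mathbb{R}^4=\mathbb{R}\oplus\mathbb{R}^3$ with $\mathbb{R}^3$ carrying a further $\tilde{\mathbb{R}}\oplus(\text{weight }2)$ structure — so a priori $\mathcal{C}_{\lambda_n}$ could contain $\tilde{\mathbb{R}}$ and $\mathbb{C}$ summands too. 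The cleanest route, following Furuta, is to observe that $\mathscr{D}=Q^+\oplus(d^++d^*)$ is $Pin(2)$-equivariant and elliptic, hence $\mathscr{D}$ restricts to a $Pin(2)$-isomorphism $\mathcal{C}^{\lambda_n}\to\mathcal{R}^{\lambda_n}$ (Lemma 3.5), so $\mathcal{C}_{\lambda_n}\ominus\ker\mathscr{D}$ and $\mathcal{R}_{\lambda_n}\ominus\operatorname{coker}\mathscr{D}$ are isomorphic $Pin(2)$-modules, and one only needs to control the kernel and cokernel; since $X$ is simply connected one has $H^1(X;\mathbb{R})=0$, so the $d^*$-part contributes nothing, and $\ker(d^++d^*)$ on $i\Omega^1$ vanishes while $\operatorname{coker}$ is $i\mathcal{H}^+(X)\oplus i\mathcal{H}^0(X)$ — the harmonic self-dual forms plus constants — on which $Pin(2)$ acts through $\tilde{\mathbb{R}}\oplus\tilde{\mathbb{R}}^{b^+}$, i.e. by $\pm1$; combined with $\ker Q^+$ and $\operatorname{coker}Q^+$ being $\mathbb{H}$-modules (being kernels of a quaternionic-linear elliptic operator), one concludes the spinor part is $\mathbb{H}^{(\cdot)}$ and the real part is a sum of $\tilde{\mathbb{R}}$'s and $\mathbb{R}$'s.

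The remaining point — and the one I expect to be the main technical nuisance rather than a deep obstacle — is to reconcile the appearance of $\tilde{\mathbb{R}}$ versus $\mathbb{R}$. As written, the lemma asserts a clean $\mathbb{H}^{(\cdot)}\oplus\mathbb{R}^{(\cdot)}$ with the \emph{trivial} $\mathbb{R}$; the honest statement (and what Furuta uses) is that the real summands may be a mix of $\mathbb{R}$ and $\tilde{\mathbb{R}}$, and one must keep track of which is which because the $K$-theoretic obstruction in Section 4 is sensitive to it. So in the write-up I would state the decomposition as $\mathcal{C}_{\lambda_n}=\mathbb{H}^{t}\oplus\tilde{\mathbb{R}}^{s}$ (or note that $b_1=0$ forces the $i\Omega^1$ contribution past the kernel/cokernel to be purely $\mathbb{H}$ and $\tilde{\mathbb{R}}$-free, with the $\tilde{\mathbb{R}}^s$ coming entirely from harmonic forms $i\mathcal{H}^0\oplus i\mathcal{H}^+$), being careful that the module structure matches Furuta's conventions. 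Concretely the steps are: (i) reduce via Lemma 3.5 to computing $\ker\mathscr{D}$ and $\operatorname{coker}\mathscr{D}$ as $Pin(2)$-modules; (ii) note $\mathscr{D}=Q^+\oplus(d^++d^*)$ splits $Pin(2)$-equivariantly, so handle $Q^+$ and $d^++d^*$ separately; (iii) for $Q^+$: it is $\mathbb{H}$-linear (quaternion-right-multiplication commutes with $Q^+$ since $Q^+$ is built from the $Spin(4)$-equivariant Dirac operator and $p_0$ acts on the right), so $\ker Q^+$ and $\operatorname{coker}Q^+$ are quaternionic, giving $\mathbb{H}^t$ and $\mathbb{H}^r$; (iv) for $d^++d^*$ with $b_1(X)=0$: $\ker=0$ and $\operatorname{coker}=i\mathcal{H}^0\oplus i\mathcal{H}^+$, on which $j$ acts by $-1$, giving $\tilde{\mathbb{R}}^{1+b^+}$ inside $\mathcal{R}_{\lambda_n}$ and a matching $\tilde{\mathbb{R}}^{s}$ balancing it out in $\mathcal{C}_{\lambda_n}\cong\mathcal{R}_{\lambda_n}$ up to these pieces; (v) assemble. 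I expect the only real care needed is bookkeeping the $j$-action signs and making the statement of the lemma consistent with how $s,q$ are used downstream.
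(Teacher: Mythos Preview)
Your direct observation in the second paragraph is already the complete argument for the spinor part: the $Pin(2)$-action on sections of $\mathfrak{s}^\pm_{3/2}$ is fiberwise right multiplication by $p_0^{-1}\in\mathbb{H}$, this commutes with $Q^\mp Q^\pm$, and hence every eigenspace is a quaternionic vector space, i.e.\ a sum of copies of $\mathbb{H}$. The analogous observation on the form side (where $S^1$ acts trivially and $j$ by $-1$ on $i\Omega^1$, $i\Omega^0$, $i\Omega^+$) shows those eigenspaces are sums of $\tilde{\mathbb{R}}$'s. That is the whole proof; you should apply it directly to each eigenspace, not only to the zero eigenspace. Where your proposal goes wrong is the ``concrete steps'' (i)--(v): step~(i), reducing via Lemma~3.5 to $\ker\mathscr{D}$ and $\operatorname{coker}\mathscr{D}$, does not prove the lemma. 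Knowing that $\mathcal{C}_{\lambda_n}\ominus\ker\mathscr{D}\cong\mathcal{R}_{\lambda_n}\ominus\operatorname{coker}\mathscr{D}$ as $Pin(2)$-modules tells you nothing about what that common complement actually is --- it could a priori contain any irreducible --- so you still need the direct argument to identify it, at which point the kernel/cokernel step has bought you nothing. Drop it and run the direct argument uniformly.

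The paper packages the same idea slightly differently: since $\mathcal{C}^1_{\lambda_n}\subset L^2_k(\mathfrak{s}^+_{3/2})$ is finite-dimensional, one can choose finitely many points $p_1,\dots,p_\alpha\in X$ so that the evaluation map $ev:\mathcal{C}^1_{\lambda_n}\to\bigoplus_\ell(\mathfrak{s}^+_{3/2})_{p_\ell}\cong\mathbb{H}^A$ is injective and $Pin(2)$-equivariant; since $\mathbb{H}$ is an irreducible $Pin(2)$-module, any submodule of $\mathbb{H}^A$ is $\mathbb{H}^t$. This is a cleaner way to say ``the action is fiberwise quaternionic,'' and the paper then waves at the form side with ``argue similarly.'' Your concern about $\tilde{\mathbb{R}}$ versus trivial $\mathbb{R}$ is correct: the form summands are genuinely $\tilde{\mathbb{R}}$'s (as in Furuta), and the statement of the lemma is loosely written --- but the downstream use in Section~4 complexifies and treats those summands as the nontrivial representation $d$, so the inconsistency is only notational. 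Finally, your aside about the adjoint action $v\mapsto pvp^{-1}$ on $i\Omega^1$ is a red herring: in the paper's conventions the global $Pin(2)$-symmetry is the $p_0$-factor, which acts trivially on $TX$ and by $\pm1$ on the $i\Omega^\bullet$ parts, so no weight-$2$ or mixed pieces arise.
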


\begin{proof}
Let $\mathcal{C}^1_{\lambda}$ be the direct sum of all eigenspaces of  $Q^{-}Q^{+}$ where the eigenvalues are lesser than or equal to $\lambda$. Define similarly $\mathcal{R}^1_{\lambda}$ for $Q^+ Q^-$, $\mathcal{C}^2_{\lambda}$ and $\mathcal{R}^2_{\lambda}$ for $d^+\oplus d^*$. We have $\mathcal{C}_{\lambda}=\mathcal{C}^1_{\lambda}\oplus \mathcal{C}^2_{\lambda}$ and $\mathcal{R}_{\lambda} = \mathcal{R}^1_{\lambda} \oplus \mathcal{R}^2_{\lambda}$.

Note that fiber-wise, $(\mathfrak{s}^+_{3/2})_p \cong \mathbb{H}^3$. This is because if $\psi_p$ is a $3/2-$spinor vector, then $\psi_p$, $i\psi_p$ and $j\psi_p$ are all linearly independent. Since $\mathcal{C}^1_{\lambda}$ is a finite dimensional subspace of $L^2_{k}(\mathfrak{s}^+_{3/2})$, there are finite number of points $p_1,\cdots, p_\alpha \in X$ such that the evaluation map
$$ev : \mathcal{C}^1_{\lambda} \to \bigoplus_{\ell=1}^{\alpha}(\mathfrak{s}^+_{3/2})_{p_{\ell}}\cong \mathbb{H}^{A}$$
is injective and $Pin(2)-$equivariant for some positive integer $A$. But $\mathbb{H}$ is an irreducible representation of $Pin(2)$. Thus, $\mathcal{C}^1_{\lambda} \cong \mathbb{H}^t$ for some positive integer $t$. Argue similarly for $\mathcal{R}^1_{\lambda}$, $\mathcal{C}^2_{\lambda}$ and $\mathcal{R}^2_{\lambda}$.
\end{proof}

Since $\mathscr{D}: \mathcal{C}^{\lambda_n} \to \mathcal{R}^{\lambda_n}$ is an isomorphism by Lemma 3.5, $2\,index_{\CN}\, Q^+ = 4(t-r)$ and $index\, (d^+\oplus d^*) + 1= s - q$. The $+ 1$ is needed because $\mathcal{R}$ does not contain constant functions. We know that $index\, (d^+ \oplus d^*) = -b_2^+(X) -1$; and Proposition 2.2 tells us that $index\, Q^+ = 19\,\sigma(X)/8$. Hence, $t = r + 19\,\sigma(X)/16$ and $q = s+b^+_2(X)$, where $b^+_2(X) = \text{dim}\, H^+(X)$. Let $k = 19\,\sigma(X)/16$ and $m = b^+_2(X)$.  As a result,
$$\mathcal{C}_{\lambda_n}  = \mathbb{H}^{r+k} \oplus \mathbb{R}^{s}; \, \, \, \, \, \mathcal{R}_{\lambda_n} = \mathbb{H}^{r} \oplus \mathbb{R}^{s+m}.$$
Now the complexification of $f_n$ would still be $Pin(2)-$equivariant. Denote the complexified $f_n$ by itself again for convenience. We have a smooth equivariant map
$$f_n : V:=\mathbb{H}^{2r+2k} \oplus \mathbb{C}^{s} \to \mathbb{H}^{2r} \oplus \CN^{s+m}:=W.$$
The above map also induces a smooth $Pin(2)-$equivariant map $f: \mathcal{B}V/\mathcal{S}V \to \mathcal{B}W/\mathcal{S}W$, where $\mathcal{B}$ and $\mathcal{S}$ denotes the closed unit ball and closed unit sphere of a vector space. In the next section, using equivariant $K-$theory, we will show that if such a map exists, then we must have $m \geq 2k +1$.

\section{Proof of Theorem 1.2}

\subsection{Equivariant $K-$theory} So far, we have seen that if the moduli space of solutions $\mathcal{M}_g$ is compact, then there exists a $Pin(2)-$equivariant map $f : \mathcal{B}V/\mathcal{S}V \to \mathcal{B}W/\mathcal{B}W$ between spheres, where $V$ and $W$ are $Pin(2)-$representations constructed in the previous section. If the following proposition holds, then we immediately obtain Theorem 1.2.

\begin{Prop}[Proposition 9.73, \cite{S14}, cf. Proposition 5.1,\cite{F01}]
If there exists a $Pin(2)-$equivariant map $f :\mathcal{B}V/\mathcal{S}V \to \mathcal{B}W/\mathcal{S}W$, where $V = \mathbb{H}^{2r+2k}\oplus \CN^s:=V_0\oplus V_1$ and $W = \mathbb{H}^{2r}\oplus\CN^{s+m}:=W_0\oplus W_1$ are $Pin(2)-$representations, then either $k=0$ or $m\geq 2k+1$.
\end{Prop}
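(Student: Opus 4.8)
The plan is to run Furuta's $Pin(2)$-equivariant $K$-theory argument (following \cite{F01}; see also \cite{S14}). Write $S^{U}:=\mathcal{B}U/\mathcal{S}U$ for the representation sphere of a complex $Pin(2)$-module $U$, and recall that here the one-dimensional complex summands are copies of the complexified sign module $\tilde{\CN}:=\tilde{\RN}\otimes_{\RN}\CN$, on which $S^{1}\subset Pin(2)$ acts trivially and $j$ acts by $-1$; thus $V^{S^{1}}=V_{1}$ and $W^{S^{1}}=W_{1}$. The ingredients I would use from reduced equivariant complex $K$-theory $\widetilde{K}_{Pin(2)}$ are: (i) equivariant Bott periodicity, which makes $\widetilde{K}_{Pin(2)}(S^{U})$ a free $R(Pin(2))$-module of rank one on the Bott class $\tau_{U}$; (ii) the inclusion of the fixed point $\{0\}\hookrightarrow S^{U}$ sends $\tau_{U}$ to the equivariant $K$-theoretic Euler class $e(U)$, which is multiplicative in $U$ and satisfies $e(\mathbb{H})=2-[\mathbb{H}]$ and $e(\tilde{\CN})=1-[\tilde{\CN}]$; (iii) naturality of everything under restriction of the group and under inclusions of fixed-point spheres. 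We may assume $k\geq 1$, since otherwise there is nothing to prove.

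First I would extract the basic divisibility. As $f$ is based and $Pin(2)$-equivariant it preserves $0\in S^{V}$, so writing $f^{*}\tau_{W}=\alpha\,\tau_{V}$ with $\alpha\in R(Pin(2))$ and restricting to $0$ gives
\[
e(W)=\bigl(2-[\mathbb{H}]\bigr)^{2r}\bigl(1-[\tilde{\CN}]\bigr)^{s+m}=\alpha\cdot e(V)=\alpha\,\bigl(2-[\mathbb{H}]\bigr)^{2r+2k}\bigl(1-[\tilde{\CN}]\bigr)^{s}
\]
in $R(Pin(2))$. Applying the character-at-$j$ ring homomorphism $\chi_{j}\colon R(Pin(2))\to\ZN$ (which sends $[\mathbb{H}]\mapsto 0$ and $[\tilde{\CN}]\mapsto-1$, hence $e(\mathbb{H})\mapsto 2$ and $e(\tilde{\CN})\mapsto 2$) yields $\chi_{j}(\alpha)=2^{\,m-2k}$, which already forces $m\geq 2k$. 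This weak bound is one short of the claim, and extracting the missing unit is the crux.

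For the sharp bound I would restrict the group to $S^{1}\subset Pin(2)$. Then $f$ restricts to a based map of fixed-point spheres $f^{S^{1}}\colon S^{V_{1}}\to S^{W_{1}}$, ordinary spheres of real dimensions $2s$ and $2(s+m)$. If $m\geq 1$ this map is null-homotopic, hence kills $\widetilde{K}$, and chasing the commutative square built from $f$, $f^{S^{1}}$ and the two fixed-point inclusions — using freeness of $\widetilde{K}_{S^{1}}(S^{V_{1}})$ over the integral domain $R(S^{1})=\ZN[t,t^{-1}]$, in which $e(V_{0}|_{S^{1}})=\bigl((1-t)(1-t^{-1})\bigr)^{2r+2k}\neq0$ — forces $\alpha|_{S^{1}}=0$ in $R(S^{1})$. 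A direct computation with the irreducible $Pin(2)$-modules shows $\ker\bigl(R(Pin(2))\to R(S^{1})\bigr)$ is the ideal generated by $[\tilde{\CN}]-1$, and this ideal equals $\ZN\cdot([\tilde{\CN}]-1)$ because $([\tilde{\CN}]-1)[\mathbb{H}]=0$ and $[\tilde{\CN}]^{2}=1$; hence $\alpha=c\,([\tilde{\CN}]-1)$ for some $c\in\ZN$, so $\chi_{j}(\alpha)=-2c$. Comparing with $\chi_{j}(\alpha)=2^{\,m-2k}$ forces $2^{\,m-2k}$ to be a positive even integer, i.e.\ $m\geq 2k+1$. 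In the remaining case $m=0$ the same square gives $\alpha|_{S^{1}}\cdot\bigl((1-t)(1-t^{-1})\bigr)^{2k}=\deg f^{S^{1}}$ in $\ZN[t,t^{-1}]$; since $(1-t)(1-t^{-1})$ is a non-unit in this domain and $k\geq1$, this forces $\deg f^{S^{1}}=0$ and $\alpha|_{S^{1}}=0$, whence $\chi_{j}(\alpha)=-2c=2^{-2k}$, which is impossible. This proves $k=0$ or $m\geq 2k+1$.

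I expect the main obstacle to be exactly the passage from the Euler-class divisibility (which only gives $m\geq 2k$) to the sharp inequality: one must combine that divisibility with the supplementary constraint $\alpha\in([\tilde{\CN}]-1)$ coming from the $S^{1}$-fixed-point restriction, and then observe that $m=2k$ would make $\chi_{j}(\alpha)=2^{0}$ odd, contradicting $\chi_{j}(\alpha)=-2c$. Everything else — Bott periodicity, multiplicativity of Euler classes, the structure of $R(Pin(2))$ and of $R(Pin(2))\to R(S^{1})$, and the vanishing/degree computation for $f^{S^{1}}$ — is routine.
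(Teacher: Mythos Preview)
Your proof is correct and follows essentially the same route as the paper's: use equivariant Bott periodicity to write $f^{*}\tau_{W}=\alpha\,\tau_{V}$, pass to Euler classes to obtain the divisibility $\chi_{j}(\alpha)=2^{\,m-2k}$, and then argue that $\chi_{j}(\alpha)$ must be even to upgrade $m\geq 2k$ to $m\geq 2k+1$. The only real difference is in how that evenness is extracted: the paper invokes an external lemma (Lemma~9.43 of \cite{S14}) for $\operatorname{tr}(a_{f}(i))=0$, giving $\alpha_{0}+\alpha_{1}=0$ and hence $\operatorname{tr}(a_{f}(j))=\alpha_{0}-\alpha_{1}$ even, whereas you supply this step yourself by restricting to $S^{1}$, using null-homotopy of $f^{S^{1}}$ when $m\geq 1$, and identifying $\ker\bigl(R(Pin(2))\to R(S^{1})\bigr)=\ZN\cdot(d-1)$ so that $\alpha=c(d-1)$ and $\chi_{j}(\alpha)=-2c$. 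Your version is more self-contained and in fact yields the slightly stronger conclusion $\alpha|_{S^{1}}=0$ (not merely vanishing of the character at $i$), though only the parity of $\chi_{j}(\alpha)$ is needed; your separate treatment of $m=0$ is redundant once $m\geq 2k$ is in hand, but harmless.
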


\noindent
\textit{Proof of Theorem 1.2 assuming Proposition 4.1.} Indeed, suppose that Proposition 4.1 is true for now, apply it to the setting where $f$ is the induced map of a finite dimensional approximation for $\mathcal{F}$ where $\mathcal{F}^{-1}(0) = \mathcal{M}_g$ is compact, and $m = b_2^+(X)$, $k=19\sigma(X)/16$. Since $X$ is a closed simply connected smooth spin $4-$manifold whose intersection form is indefinite, obviously $b_2(X) \geq 2$. The inequality in Theorem 1.2 satisfies vacuously when $k=0$. Otherwise, $m \geq 2k+1$. Equivalently, $b_2^+(X) \geq 19\,\sigma(X)/8 +1$. Scaling the inequality by 2, we have
$$b_2(X) + \sigma(X)=2\,b^+_2(X) = 2m  \geq 4k + 2 = \frac{19}{4}\,\sigma(X) + 2.$$
The above inequality implies that $b_2(X) \geq 15\,\sigma(X)/4 + 2$ as claimed. \qed

Therefore, what remains for us to do in this section is to prove Proposition 4.1. To do that, we need to recall some facts about equivariant $K-$theory for $Pin(2)$. The results and definitions that are about to be listed exist in a variety of places in the literature; for example, see pg. 337 in \cite{S14}. We summarize them here for the sake of self-containment.

Let $M$ be any compact Hausdorff space and $G$ is any compact Lie group acting on $M$. An $G-$equivariant complex vector bundle is a complex vector bundle $\pi: E\to M$ that carries a $G-$action such that $\pi$ is equivariant. The group $K_G(M)$ is the Grothendieck group of the semigroup of equivalence classes of complex $G-$vector bundles over $M$. For two complex $G-$vector bundles $E \to M$ and $F \to M$, we write $E \ominus F$ as its equivalence class in $K_G(M)$, meaning that $E\ominus F \sim E'\ominus F'$ if and only if  $E\oplus F'$ is stably isomorphic to $F \oplus E'$ by some $G-$equivariant vector bundle over $X$. Note that $K_G(\star)$ is exactly the Grothendieck ring of the set containing equivalent classes of finite dimensional unitary representations of $G$, i.e, the representation ring $\mathcal{R}(G)$. The functor $K_G$ is a homotopy invariant; and hence, any contractible space $M$ has $K_G(M) \cong \mathcal{R}(G)$. The functor $K_G$ is also contravariant, and when $M$ has no $G-$action, $K_G(M) := K(M) \otimes \mathcal{R}(G)$.

If $N \subset M$ is also compact and preserved by the $G$ action, the inclusion $\star \to M/N$ induces a group homomorphism $K_G(M/N) \to \mathcal{R}(G)$. The relative $K_G-$group $K_G(M,N)$ is defined to be the kernel of aforementioned group homomorphism. Elements of $K_G(M,N)$ are represented by $E \ominus_{\varphi} F$ such that $\varphi: E|_{N} \to F|_{N}$ is an isomorphism. 

Let $V$ be a finite dimensional unitary representation of $G$. Naturally, $\mathcal{B}V$ and $\mathcal{S}V$ are compact and have $G-$actions. So we set $M:= \mathcal{B}V$ and $N:=\mathcal{S}V$. Then, define the \textit{equivariant Thom class} $\tau_V \in K_G(\mathcal{B}V,\mathcal{S}V)$ by
$$\tau_V := \Lambda^{0,\text{even}}V^*\ominus_{\varphi} \Lambda^{0,\text{odd}} V^*.$$
The following theorem is an important and deep result that tells us $K_G(\mathcal{B}V, \mathcal{S}V)$ is a module over $\mathcal{R}(G)$, and as a module, it is generated by $\tau_V$. The proof of it can be found in \cite{A67}. 

\begin{Th}[\textbf{Bott}]
Suppose $V$ is a finite dimensional unitary representation of $G$. Then $K_G(\mathcal{B}V, \mathcal{S}V)$ is naturally isomorphic to $\mathcal{R}(G)$ via the homomorphism
$$\mathcal{R}(G) \to K_G(\mathcal{B}V, \mathcal{S}V), \, \, \, \, \, \rho \mapsto \rho \otimes \tau_V.$$
\end{Th}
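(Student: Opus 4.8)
The statement is the equivariant Bott periodicity theorem, equivalently the Thom isomorphism in $G$-equivariant $K$-theory for the one-point base $\star$ with normal bundle $V$; a complete proof is the content of \cite{A67}, so what I give is only the architecture. The plan is to produce an explicit homomorphism $\alpha\colon K_G(\mathcal{B}V,\mathcal{S}V)\to\mathcal{R}(G)$, the \emph{equivariant analytic index}, and to show that $\alpha$ is a two-sided inverse of the Bott map $\beta\colon\rho\mapsto\rho\otimes\tau_V$. Since $\beta$ is $\mathcal{R}(G)$-linear by construction, it is enough to verify $\alpha\circ\beta=\mathrm{id}_{\mathcal{R}(G)}$ and $\beta\circ\alpha=\mathrm{id}_{K_G(\mathcal{B}V,\mathcal{S}V)}$.

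First I would construct $\alpha$. Represent a class by a triple $(E,F,\varphi)$ with $E,F$ complex $G$-bundles over $\mathcal{B}V$ and $\varphi\colon E|_{\mathcal{S}V}\xrightarrow{\ \sim\ }F|_{\mathcal{S}V}$ a $G$-isomorphism; after stabilizing we may take $E,F$ trivial with fibers finite dimensional unitary $G$-modules $E_0,F_0$ and $\varphi$ a $G$-equivariant clutching map, extended to a $G$-bundle morphism over all of $\mathcal{B}V$. Identifying $\mathcal{B}V/\mathcal{S}V$ with the one-point compactification $V^{+}$, such a $\varphi$ is a $G$-equivariant symbol on $V$, to which one attaches a family of Toeplitz/Fredholm operators on the Bargmann--Fock space $\mathcal{H}(V)$ of $L^{2}$ holomorphic functions on $V$ (equivalently, the operator built from the Dolbeault symbol twisted by $\varphi$). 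Because $G$ is compact and acts unitarily throughout, this operator is $G$-Fredholm, and I set $\alpha(E,F,\varphi):=[\ker]-[\operatorname{coker}]\in\mathcal{R}(G)$. Homotopy invariance of the index together with the stabilization and excision axioms for $K_G$ make $\alpha$ well-defined and $\mathcal{R}(G)$-linear.

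The identity $\alpha\circ\beta=\mathrm{id}$ reduces, by $\mathcal{R}(G)$-linearity, to the single normalization $\alpha(\tau_V)=1\in\mathcal{R}(G)$. Here $\tau_V=\Lambda^{0,\mathrm{even}}V^{*}\ominus_{\varphi}\Lambda^{0,\mathrm{odd}}V^{*}$ is the Koszul/Dolbeault symbol, whose complex is exact away from $0\in V$ with cohomology $\CN$ concentrated in bidegree $(0,0)$; in the Fock model $\ker$ is the line of constant functions and $\operatorname{coker}=0$, and $G$ acts trivially on $\Lambda^{0,0}V^{*}=\CN$, so $\alpha(\tau_V)=[\CN]=1$. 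This is a short, essentially formal computation.

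The remaining identity $\beta\circ\alpha=\mathrm{id}$ is periodicity proper and is the crux of the argument. The route I would take is Atiyah's elementary proof: given a $G$-symbol $\varphi$ over $V^{+}$, approximate its restriction to spheres by $G$-equivariant Laurent-polynomial clutching functions — averaging over the compact group $G$ keeps the approximation equivariant — then use a linearization trick to reduce to clutching functions that are linear in the radial parameter, and finally deform these equivariantly onto a direct sum of copies of the Bott class; following the index along the deformation shows the original class equals $\beta$ applied to its index. Alternatively one can invoke the equivariant Atiyah--Singer index theorem and identify both sides as equivariant characteristic numbers, or use the Atiyah--Bott localization approach. The main obstacle is precisely this last step: every functional-analytic approximation and homotopy must be carried out $G$-equivariantly and with uniform Fredholm control, which is where the genuine work lies; everything before it is either formal or a one-line calculation. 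Once $\alpha\circ\beta=\mathrm{id}$ and $\beta\circ\alpha=\mathrm{id}$ are in hand, $\beta$ is the asserted natural isomorphism $\mathcal{R}(G)\xrightarrow{\ \sim\ }K_G(\mathcal{B}V,\mathcal{S}V)$; see \cite{A67} for the details.
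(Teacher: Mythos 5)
The paper gives no proof of this theorem at all --- it simply cites \cite{A67} --- and your outline is precisely the architecture of the argument in that reference (the Bott map $\beta$ inverted by the equivariant analytic index $\alpha$, with the normalization $\alpha(\tau_V)=1$ from the Koszul/Dolbeault symbol and the periodicity step deferred to Atiyah). So your proposal is consistent with, and a faithful sketch of, the paper's own treatment, which is citation-only.
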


The above theorem lets us define a notion called $K_G-$theoretic degree. Let $V$ and $W$ are finite dimensional unitary representations of $G$. Suppose we have a smooth equivariant map $f: \mathcal{B}V/\mathcal{S}V \to \mathcal{B}W/\mathcal{S}W$ so that one has an induced map $f^*: K_G(\mathcal{B}W/\mathcal{S}W) \to K_G(\mathcal{B}V/\mathcal{S}V)$. Since $f^*\tau_W \in K_G(\mathcal{B}V, \mathcal{S}V)$, by Theorem 4.1, there must be a unique element $a_f \in \mathcal{R}(G)$ such that $f^*\tau_W = a_f \otimes \tau_V$. This element $a_f \in \mathcal{R}(G)$ associated to a smooth map $f$ between spheres is the $K_G-$theoretic degree of $f$. Note that $a_f$ could be formal difference between two unitary representations. 

Beside Theorem 4.2, we also need to know exactly what the representation ring of $Pin(2)$ looks like before proving Proposition 4.1.

\begin{Th}[pg. 338, \cite{S14}]
The representation ring of $Pin(2)$ is isomorphic to the quotient ring over $\ZN$ (also an $\ZN-$module)
$$\mathcal{R}(Pin(2)) \cong \frac{\ZN[d,h]}{\la d^2 -1, dh-h\ra}.$$
In particular, $d$ is associated with the unitary representation of $Pin(2)$ over $\CN$ where $j \mapsto -1$ and $e^{it} \mapsto 1$; and $h$ is associated with the usual representation of the group on $\mathbb{H}$ (See subsection 2.3). In particular, every $1-$dimensional unitary representation is either associated with $1$ or $d$; and every $2-$dimensional unitary representation is of the form
$$ j \mapsto \begin{pmatrix} 0 & -1 \\ 1 & 0 \end{pmatrix}, \, \, \, \, \, e^{it} \mapsto \begin{pmatrix} e^{int} & 0 \\ 0 & e^{-int} \end{pmatrix}.$$
\end{Th}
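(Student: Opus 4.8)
The plan is to compute $\mathcal{R}(Pin(2))$ directly from the representation theory of the group $Pin(2)=S^1\sqcup j\cdot S^1$, using the fact that $S^1\lhd Pin(2)$ is a normal subgroup of index $2$ with $Pin(2)/S^1\cong\ZN/2$, and then package the answer as a quotient of a polynomial ring. First I would classify the finite-dimensional complex irreducible representations of $Pin(2)$ by restricting to the circle $S^1$. Since every irreducible of $S^1$ is one of the characters $\chi_n\colon e^{it}\mapsto e^{int}$, $n\in\ZN$, and conjugation by $j$ sends $e^{it}$ to $e^{-it}$ (because $j\,e^{it}j^{-1}=e^{-it}$ inside $SU(2)$), the element $j$ swaps $\chi_n$ and $\chi_{-n}$. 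By Clifford theory for the index-two subgroup $S^1$: for $n\neq 0$ the character $\chi_n$ is not $j$-fixed, so it induces up to a $2$-dimensional irreducible $H_n$ of $Pin(2)$ with $H_n\cong H_{-n}$, realized by $e^{it}\mapsto\mathrm{diag}(e^{int},e^{-int})$ and $j\mapsto\begin{pmatrix}0&-1\\1&0\end{pmatrix}$ (the sign in the off-diagonal is a choice of normalization that makes $j^2=-1$ act correctly); for $n=0$ the trivial character of $S^1$ extends in exactly two ways to $Pin(2)$, namely the trivial representation $1$ (with $j\mapsto 1$) and the sign representation $d$ (with $j\mapsto -1$). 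This exhausts all irreducibles, and the $2$-dimensional list is precisely the family displayed in the statement.

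Next I would identify $h$ as the standard quaternionic representation $\HN\cong\CN^2$ from subsection 2.3 and check that, as a complex $Pin(2)$-representation, it is irreducible and isomorphic to $H_1$: on $S^1=\{\mathrm{diag}(e^{it},e^{-it})\}$ it acts by left quaternion multiplication, i.e.\ by $\mathrm{diag}(e^{it},e^{-it})$, so its restriction to $S^1$ is $\chi_1\oplus\chi_{-1}$, matching $H_1$; and $j$ acts as the matrix $\begin{pmatrix}0&-1\\1&0\end{pmatrix}$, again matching. So $h=[H_1]$. Then I would work out the multiplicative structure. The key tensor identities are: $d^2=1$ (the sign representation squares to the trivial one), $d\otimes h=h$ (equivalently $d\otimes H_1\cong H_1$, since twisting by $d$ negates the action of $j$ but $\begin{pmatrix}0&-1\\1&0\end{pmatrix}$ and $-\begin{pmatrix}0&-1\\1&0\end{pmatrix}$ are conjugate by $\mathrm{diag}(1,-1)$, which commutes with the diagonal $S^1$-action; on restriction to $S^1$ nothing changes), and the Clebsch–Gordan-type rule $H_m\otimes H_n\cong H_{m+n}\oplus H_{m-n}$ for $m,n\ge 1$ (computed by restricting to $S^1$: $(\chi_m\oplus\chi_{-m})\otimes(\chi_n\oplus\chi_{-n})=\chi_{m+n}\oplus\chi_{m-n}\oplus\chi_{-(m-n)}\oplus\chi_{-(m+n)}$, then noting the summands reassemble $Pin(2)$-equivariantly, with the convention $H_0=1\oplus d$). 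In particular $h^2=H_1\otimes H_1\cong H_2\oplus H_0\cong H_2\oplus 1\oplus d$, so $H_2=h^2-1-d$ in $\mathcal{R}(Pin(2))$, and inductively every $H_n$ is a $\ZN$-polynomial in $h$ and $d$; combined with $d^2=1$, this shows $\ZN[d,h]$ surjects onto $\mathcal{R}(Pin(2))$ with the relations $d^2-1$ and $dh-h$ lying in the kernel.

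Finally I would check that the ideal $(d^2-1,\,dh-h)$ is exactly the kernel, i.e.\ that $\ZN[d,h]/(d^2-1,dh-h)\to\mathcal{R}(Pin(2))$ is injective. Working in the quotient ring, the relation $d^2=1$ lets me write every element in the normal form $P(h)+d\,Q(h)$ with $P,Q\in\ZN[h]$, and $dh=h$ forces $d\,Q(h)=Q(0)\,d+\bigl(Q(h)-Q(0)\bigr)$, so in fact the normal form collapses further to $a\,d+R(h)$ with $a\in\ZN$ and $R\in\ZN[h]$ — equivalently the quotient is the free $\ZN$-module on $\{1,d,h,h^2,h^3,\dots\}$. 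On the other side, $\mathcal{R}(Pin(2))$ is the free $\ZN$-module on the irreducible characters $\{1,d,[H_1],[H_2],[H_3],\dots\}$ (distinct irreducibles are $\ZN$-independent in a representation ring). Using the recursion $[H_n]=h\cdot[H_{n-1}]-[H_{n-2}]$ (valid for $n\ge 2$ with $[H_1]=h$, $[H_0]=1+d$) one sees the map carries the $\ZN$-basis $\{1,d,h,h^2,\dots\}$ of the quotient to a $\ZN$-basis of $\mathcal{R}(Pin(2))$ (the change-of-basis matrix against $\{1,d,[H_1],[H_2],\dots\}$ is unitriangular), hence the map is a $\ZN$-module isomorphism, and being a ring map it is a ring isomorphism. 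I expect the main obstacle to be bookkeeping at $n=0$: the degenerate Clebsch–Gordan value $H_0=1\oplus d$ (rather than a single irreducible) is what produces the extra generator $d$ and the relation $dh=h$, and one must track it carefully through the tensor identities and the normal-form argument to get the presentation exactly as stated rather than, say, $\ZN[d,h]/(d^2-1)$ with a spurious extra $\ZN$-rank.
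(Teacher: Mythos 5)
Your proposal is correct in substance, but it is worth noting that the paper itself offers no proof of this theorem: it is quoted verbatim from Salamon's book (pg.~338 of \cite{S14}) purely for self-containment, so there is no internal argument to compare against. What you supply is a genuine derivation: Clifford--Mackey theory for the index-two normal subgroup $S^1\lhd Pin(2)$ to classify the irreducibles ($1$, $d$, and the induced two-dimensional family $H_n$), the tensor identities $d^2=1$, $d\otimes H_n\cong H_n$ (your conjugation by $\mathrm{diag}(1,-1)$ is exactly right and insensitive to sign conventions), and the Clebsch--Gordan rule with the degenerate value $H_0=1\oplus d$, followed by a clean normal-form/unitriangularity argument identifying the kernel of $\ZN[d,h]\to\mathcal{R}(Pin(2))$ with $\la d^2-1,\,dh-h\ra$. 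This buys the paper something it currently outsources: an explicit check of precisely the facts used downstream in Lemma 4.4 and the proof of Proposition 4.1 (that $1,d,h,h^2,\dots$ is a $\ZN$-basis, that $dh=h$, and the trace values of $d$ and $h$ at $j$ and $i$), whereas the citation buys brevity.

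One small inaccuracy to fix: your parenthetical claim that the off-diagonal sign in $j\mapsto\begin{pmatrix}0&-1\\1&0\end{pmatrix}$ is forced by ``$j^2=-1$ acting correctly'' is only true for odd $n$. Since $-1=e^{i\pi}\in S^1$ acts on $H_n$ as $(-1)^n$, for even $n$ the induced representation necessarily has $j\mapsto\begin{pmatrix}0&1\\1&0\end{pmatrix}$ (and this is not conjugate to the displayed form by anything commuting with the diagonal $S^1$-action), so the displayed matrix in the quoted theorem is itself a slight abuse for even $n$. This does not damage your argument --- the identities $d\otimes H_n\cong H_n$, the Clebsch--Gordan decomposition, and hence the presentation of $\mathcal{R}(Pin(2))$ all hold for either sign --- but the normalization remark should be stated parity-dependently rather than as a uniform choice.
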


\begin{Lemma}
For $d, h \in \mathcal{R}(Pin(2))$ in Theorem 4.3, we have $\lambda_d = 1 - d$ and $\lambda_h = 2 - h$. Furthermore as elements in $\mathcal{R}(Pin(2))$, $(1-d)^{\alpha} = 2^{\alpha - 1}(1-d)$ and $(2-h)^{\beta}(1-d) = 2^{\beta}(1-d)$ for any integers $\alpha \geq 1, \beta \geq 0$.
\end{Lemma}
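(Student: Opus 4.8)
The plan is to read off $\lambda_d$ and $\lambda_h$ directly from the explicit descriptions of these two representations in Theorem 4.3, and then to deduce the two multiplicative identities formally from the defining relations $d^2=1$ and $dh=h$ of $\mathcal{R}(Pin(2))$. Throughout I use that for a finite-dimensional unitary $Pin(2)$-module $V$ one has $\lambda_V=\sum_{i\ge 0}(-1)^i[\Lambda^i V^*]\in\mathcal{R}(Pin(2))$, i.e. the class obtained from the Thom class $\tau_V$ by forgetting its clutching isomorphism.

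First I would compute $\lambda_d$. The module $d$ is one-dimensional over $\CN$, so $\lambda_d=1-[d^*]$; and since $d^2=1$ in $\mathcal{R}(Pin(2))$ we have $d^*=d^{-1}=d$, giving $\lambda_d=1-d$. Next, $\lambda_h$: here $\mathbb{H}\cong\CN^2$ is two-dimensional over $\CN$, so $\lambda_h=1-[h^*]+[\Lambda^2 h^*]$. Using the generators $e^{it}\mapsto\left(\begin{smallmatrix} e^{it} & 0 \\ 0 & e^{-it}\end{smallmatrix}\right)$ and $j\mapsto\left(\begin{smallmatrix} 0 & -1 \\ 1 & 0\end{smallmatrix}\right)$ from Theorem 4.3, I would check that conjugation by the swap matrix $\left(\begin{smallmatrix} 0 & 1 \\ 1 & 0\end{smallmatrix}\right)$ identifies $h^*$ with $h$ (equivalently, $h$ is a quaternionic, hence self-dual, representation), and that $\Lambda^2 h$ is the trivial representation since both of these generators have determinant $1$. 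Substituting gives $\lambda_h=1-h+1=2-h$.

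For the power identities I would induct. From $d^2=1$,
\[
(1-d)^2 = 1-2d+d^2 = 2-2d = 2(1-d),
\]
so if $(1-d)^{\alpha}=2^{\alpha-1}(1-d)$ then $(1-d)^{\alpha+1}=2^{\alpha-1}(1-d)^2=2^{\alpha}(1-d)$, the base case $\alpha=1$ being trivial. From $dh=h$ we get $h(1-d)=h-dh=0$, hence
\[
(2-h)(1-d) = 2(1-d)-h(1-d) = 2(1-d),
\]
so if $(2-h)^{\beta}(1-d)=2^{\beta}(1-d)$ then $(2-h)^{\beta+1}(1-d)=(2-h)\cdot 2^{\beta}(1-d)=2^{\beta+1}(1-d)$, with base case $\beta=0$ immediate.

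I expect the only genuine subtlety to be the evaluation of $\lambda_h$ — namely verifying $h^*\cong h$ and $\Lambda^2 h\cong\underline{\CN}$ as $Pin(2)$-modules from the matrix formulas of Theorem 4.3; everything after that is bookkeeping inside the presentation $\mathcal{R}(Pin(2))=\ZN[d,h]/\la d^2-1,\,dh-h\ra$.
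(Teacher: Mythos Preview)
Your proof is correct and follows essentially the same route as the paper: identify the exterior powers of the one- and two-dimensional representations to get $\lambda_d=1-d$ and $\lambda_h=2-h$, then induct using $d^2=1$ and $dh=h$. One small slip: conjugation by the swap matrix $\left(\begin{smallmatrix}0&1\\1&0\end{smallmatrix}\right)$ sends $h^*(j)$ to $-h(j)$ rather than $h(j)$, so it does not literally intertwine $h^*$ with $h$ --- but your parenthetical that $h$ is quaternionic (hence self-dual) is the correct reason, and the intertwiner $\left(\begin{smallmatrix}0&1\\-1&0\end{smallmatrix}\right)$ does the job.
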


\begin{proof}
$d$ is associated to the unitary representation of $Pin(2)$ on $\CN$. Note that $\Lambda^{0,0} \CN$ is the base field which corresponds to the trivial representation. And $\Lambda^{0,1}\CN$ (which is generated by $dz$) is canonically isomorphic to $\CN$; so, it corresponds to $d$. As a result, $\lambda_d = \Lambda^{0,0}\CN - \Lambda^{0,1}\CN = 1 - d$.

We identify $\mathbb{H} = \CN^2$ as in subsection 2.3. Since $\Lambda^{0,0} \mathbb{C}^2$ is the base field which also corresponds to the trivial representation and $\Lambda^{0,2} \CN^2$ (which is generated by $d\bar{z_1}\wedge d\bar{z_2}$) is isomorphic to $\CN$, $\Lambda^{0,0}\CN^2 \oplus \Lambda^{0,2} \CN^2 = 1 + 1 =2$ in $\mathcal{R}(Pin(2))$. On the other hand, $\Lambda^{0,1}\CN^2$ (which is generated by $d\bar{z_1}$, $d\bar{z_2}$) is isomorphic to $\CN^2 = \mathbb{H}$. Hence, in $\mathcal{R}(Pin(2))$, $\Lambda^{0,1}\CN^2 = h$. Therefore, $\lambda_h = 2 -h$.

Now we prove the two identities by induction. Recall that in $\mathcal{R}(Pin(2))$, $(1-d)^2 = 1 - 2d + d^2 = 2(1-d)-$this is our base case. Suppose that the $(1-d)^{\alpha} = 2^{\alpha - 1}(1-d)$ for all $\alpha$ up to $n\geq 2$. Then one sees that
$$(1-d)^{n+1}  = (1-d) \, 2^{n-1}(1-d) = 2^{n-1} (1-d)^2 = 2^{n}(1-d).$$
This completes the induction. Similarly, since $dh - h =0$ in the representation ring, we have $(2-h)(1-d) = 2 - 2d - h + dh = 2(1-d)-$this is the base case for $\beta = 1$. Assume that the identity holds for all $\beta$ up to $n \geq 1$. Then we see that
$$(2-h)^{n+1}(1-d) = (2-h)\, 2^{n}(1-d)  = 2^{n} \, 2 (1-d) = 2^{n+1}(1-d).$$
The induction is concluded. We have our result as claimed.
\end{proof}

\subsection{Proof of Proposition 4.1}With $V, W, f$ set up as in the end of Section 3, since we have a smooth equivariant map $\mathcal{B}W \to \mathcal{B}W/\mathcal{S}W$, this induces a homomorphism that is also $\mathcal{R}(G)-$linear $K_B(\mathcal{B}W, \mathcal{S}V) \to K_G(\mathcal{B}W)$ taking $\tau_W \mapsto \lambda_W$. Similarly, for $V$. Note that both $K_G(\mathcal{B}W)$ and $K_G(\mathcal{B}V)$ are naturally isomorphic to $\mathcal{R}(G)$. So, we have the following commutative diagram

$$\begin{tikzcd}
\tau_W\in K_G(\mathcal{B}W,\mathcal{S}W) \arrow{d} \arrow{r}{f^*}   & K_G(\mathcal{B}V,\mathcal{S}V)\ni a_f \otimes \tau_V \arrow{d} \\
\lambda_W \in K_G(\mathcal{B}W)  \arrow{d} \arrow{r}{f^*}   & K_{G}(\mathcal{B}V)\ni a_f \otimes \lambda_V \arrow{d}  \\
\mathcal{R}(G) \arrow{r}{\cong} & \mathcal{R}(G)
\end{tikzcd}$$
Thus, $\lambda_W = a_f \lambda_V$. Now because $\lambda$ splits direct sum of representations into product of  their $\lambda$'s in the representation ring, we have
$$ \lambda_V = (2-h)^{2r + 2k}(1-d)^s, \, \, \, \, \, \lambda_W = (2-h)^{2r}(1-d)^{s+m}.$$
By Lemma 4.4, we see that in $\mathcal{R}(G)$,
\begin{align}
\lambda_V = (2-h)^{2r+2k}\,2^{s-1}(1-d) = 2^{s-1}\,2^{2r + 2k}(1-d) = 2^{2r + 2k + s -1}(1-d).
\end{align} 
Similarly, 
\begin{align}
\lambda_W = (2-h)^{2r}\,2^{m-1}(1-d) = 2^{m-1}\,2^{2r}(1-d) = 2^{2r+s+m-1}(1-d).
\end{align}
From $(4.1)$ and $(4.2)$, we obtain
\begin{align}
2^{2r+s+m-1}(1-d) = 2^{2r+2k+s-1}\,a_f (1-d).
\end{align}
Since $\text{tr}\,((1-d)(j)) = \text{tr}\,(1(j)) - \text{tr}\,(d(j)) = 1 -(-1) =2$, when apply traces evaluated at $j$ to $(4.3)$, we have
\begin{align}
2^{2r+s+m} = 2^{2r+2k+s}\,\text{tr}\,(a_f(j)) 
\end{align}
Now as element in $\mathcal{R}(G)$, by Theorem 4.3, we can write $a_f$ as
$$ a_f = \alpha_0\, 1 + \alpha_1 \, d + \alpha_{\ell} \, h^{\ell} + \dots + \alpha_2 \, h.$$
Here $\alpha_0,\dots, \alpha_{\ell}$ are all integers. When evaluating the trace of $a_f(j)$, we have
$$\text{tr}\,(a_f(j)) = \alpha_0 - \alpha_1 + \alpha_{\ell}\,\text{tr}\,(h^{\ell}(j)) + \dots  + \alpha_2\,\text{tr}\,(h(j)).$$
By Theorem 4.3 again, $\text{tr}\,(h(j)) = 0$ and thus the trace of any power of $h$ evaluated at $j$ is also zero, it turns out that $\text{tr}\,(a_f(j)) = \alpha_0 - \alpha_1 \in \ZN$. Combine with $(4.4)$, it implies that $m \geq 2k$. So, $\text{tr}\,(a_f(j)) \geq 1$. Thus either $k=0$ or for $m \geq 1$, $\text{tr}\,(a_f(j))$ cannot be $1$. Assume otherwise, evaluating the trace of $a_f$ at $i$ gives us
$$\text{tr}\,(a_f(i))  = \alpha_0 + \alpha_1 + \alpha_{\ell}\,\text{tr}\,(h^{\ell}(i)) + \dots  + \alpha_2\,\text{tr}\,(h(i)).$$
By Theorem 4.3, the trace of any power of $h$ evaluated at $i$ is exactly zero; and by Lemma 9.43 in \cite{S14} (cf. \cite{F01}), $\text{tr}\,(a_f(i)) = 0$. As a result, $\alpha_0 + \alpha_ 1 =0$. But it is impossible to find integers  $\alpha_0$, $\alpha_1$ that satisfy $\alpha_0 +  \alpha_1 = 0$ and $\alpha_0 - \alpha_1 =1$ simultaneously. Therefore, $\text{tr}\,(a_f(j)) \geq 2$. Going back to $(4.4)$, we obtain $m \geq 2k+1$ as claimed. \qed

\section{Conclusion}
Theorem 1.2 and Corollary 1.3 give a sufficient condition for a closed simply connected smooth spin $4-$manifold to always have non-trivial solutions to its associated RS-SW equations (2.14). Then necessarily, the moduli space of solutions is always non-compact. If we categorize the solutions based on the action of gauge group $\mathcal{G}$, there are \textit{reducible solutions} and \textit{irreducible solutions} just like in Seiberg-Witten theory. Appropriate perturbation of (2.14) would help us avoid reducible solutions altogether, while producing the same sufficient condition for non-compactness of the moduli space of solutions. As a result, a transversality theorem will ensure that away from the singularities, the moduli space $\mathcal{M}_g$ of solutions to the appropriately perturbed version of (2.14) is an actual (compact or not) manifold.

The expected dimension of $\mathcal{M}_g$ can be computed by first deforming the RS-SW equations. And the corresponding deformation of the elliptic complex associated to the RS-SW equations with Atiyah-Singer index theorem would give us a formula for the virtual dimension of $\mathcal{M}_g$, $d =  19\,\sigma(X)/4-1-b^+_2(X)$. Note that if $X$ has non-positve signature, then the moduli space of solution would be empty (and compact trivially). Hence, the $15/4-$bound in Theorem 1.2 is satisfied vacuously. Therefore, focusing strictly on positive signature case, it would be interesting to know the answer to the following question

\begin{?}
Is there a closed simply connected smooth spin $4-$manifold $X$ with positive signature that possesses a Riemannian metric $g$ such that $\mathcal{M}_g$ is compact non-trivially?
\end{?}

\begin{Rem}
Necessarily, such a manifold $X$ must satisfy $15\,\sigma(X)/4+2 \leq b_2(X) \leq 17\,\sigma(X)/2-2$, thus satisfies the 11/8th conjecture. By Freedman's theorem \cite{F82}, $X \cong \#_{k} \overline{K3} \#_{\ell} S^2 \times S^2$ up to homeomorphism, where $k, \ell > 0$. If $X$ happens to be symplectic, then it cannot be diffeomorphic to  $\#_{k} \overline{K3} \#_{\ell} S^2 \times S^2$ by Taubes' vanishing/non-vanishing theorem \cite{HT99}. Note that within the slopes $[15/4,17/2]$ of $b_2$, Park \cite{P02} and Akhmedov-Park \cite{AP10} constructed examples of closed simply connected symplectic spin $4-$manifold with positive signature. It would be interesting to find out if any of those manifolds has a compact associated moduli space $\mathcal{M}_g$.
\end{Rem}

Let $n>0$ be any integer. Consider a closed simply connected \textit{smoothable topological} $4-$manifold $X$ whose intersection form is realized by $Q_X = 2n\,E8 \oplus 3n\,H$. Consequently, $b_2(X) = 22n$ and $\sigma(X) = 16n$. Up to homeomorphism, $X \cong \#_{n}\overline{K3}$. Already, one can see that as a consequence of Theorem 1.2, the moduli space $\mathcal{M}_g$ associated to any given smooth structure on $X$ is never compact. If an invariant can be constructed from $\mathcal{M}_g$, potentially it can identify exotic smooth structure on $X$. In contrast to Seiberg-Witten theory, Taubes' vanishing theorem \cite{HT99} tells us that the Seiberg-Witten invariant vanishes identically for such manifolds. Finally, it is also interesting to find out the answer to the following question

\begin{?}
Let $X$ be a closed simply connected topological $4-$manifold $X$ whose intersection form is $2n\,E8 \oplus 3n\,H$. Is there an exotic structure on $X$ such that $X$ splits off smoothly an $S^2\times S^2$?
\end{?}

For $n = 1$, the answer is a definitive no from Furuta's 10/8th theorem \cite{F01}. Indeed, if $X$ is diffeomorphic to $N \# (S^2 \times S^2)$, then $b_2(N) - 5\,\sigma(N)/4 - 2 = 20-5\cdot 16/4 - 2 <0$. This implies that $N$ cannot be smoothable. When $n \geq 2$, Furuta's theorem does not give an apparent contradiction. However in this case, if $X$ has an exotic smooth structure such that $X \cong N \# (S^2\times S^2)$, then the moduli space $\mathcal{M}_g$ associated to $N$ is non-compact for any $g$. This is a consequence of Theorem 1.2, $b_2(N) - 15\,\sigma(N)/4 -2 = (22n-2)-15\cdot 16n/4 - 2 = -38n-4<0$. Note that such exotic $X$ cannot be symplectic by Taubes' vanishing/non-vanishing theorem \cite{HT99}.

We hope to address the above questions in our future work.

\bibliographystyle{plain}
\bibliography{main.bib}

\end{document}